\newcommand{\N}{{I\!\!N}}
\newcommand{\dint}{\dyle\int}
\newcommand{\p}{\partial}
\newcommand{\re}{{I\!\!R}}
\newcommand{\ren}{\re^N}
\newcommand{\dyle}{\displaystyle}
\newcommand{\io}{\int\limits_\O}
\renewcommand{\a }{\alpha }
\renewcommand{\d }{\delta }
\newcommand{\D }{\Delta }
\newcommand{\e }{\varepsilon }
\newcommand{\g }{\gamma}
\renewcommand{\l }{\lambda }
\newcommand{\m }{\mu }
\newcommand{\n }{\nabla }
\newcommand{\s }{\sigma }
\renewcommand{\O }{\Omega }
\newcommand{\inn}{\mbox{ in }}
\renewcommand{\ge }{\geqslant}
\renewcommand{\geq }{\geqslant}
\renewcommand{\le }{\leqslant}
\renewcommand{\leq }{\leqslant}
\newenvironment{pf}{\noindent{\sc Proof}.\enspace}{\hfill\qed\medskip}
\newtheorem{Theorem}{Theorem}[section]
\newtheorem{Corollary}[Theorem]{Corollary}
\newtheorem{Lemma}[Theorem]{Lemma}
\newtheorem{Proposition}[Theorem]{Proposition}
\theoremstyle{definition}
\newtheorem{Definition}[Theorem]{Definition}
\newtheorem{remarks}[Theorem]{Remarks}
\newtheorem{remark}[Theorem]{Remark}
\newcommand{\cqd}{{\unskip\nobreak\hfil\penalty50
        \hskip2em\hbox{}\nobreak\hfil\mbox{\rule{1ex}{1ex} \qquad}
        \parfillskip=0pt \finalhyphendemerits=0\par\medskip}}
\begin{document}

\title[Fractional KPZ equations with Hardy potential]
{Fractional KPZ equations with critical growth in the gradient respect to Hardy potential}
\thanks{\thanks{Work partially supported by Project MTM2016-80474-P, MINECO, Spain.\\  The first author is also partially supported by DGRSDT, Algeria. }}
\author[B. Abdellaoui, I. Peral, A. Primo,  F. Soria]{Boumediene Abdellaoui, Ireneo Peral, Ana Primo, Fernando Soria }
\address{\hbox{\parbox{5.7in}{\medskip\noindent {$*$ Laboratoire d'Analyse Nonlin\'eaire et Math\'ematiques
Appliqu\'ees. \hfill \break\indent D\'epartement de
Math\'ematiques, Universit\'e Abou Bakr Belka\"{\i}d, Tlemcen,
\hfill\break\indent Tlemcen 13000, Algeria.}}}}
\address{\hbox{\parbox{5.7in}{\medskip\noindent{Departamento de Matem\'aticas,\\ Universidad Aut\'onoma de Madrid,\\
        28049, Madrid, Spain. \\[3pt]
        \em{E-mail addresses: }{\tt boumediene.abdellaoui@uam.es, \tt ireneo.peral@uam.es, ana.primo@uam.es, fernando.soria@uam.es
         }.}}}}
\date{\today}
\thanks{2010 {\it Mathematics Subject Classification.  47G20, 35J75, 35J62, 35R09.}  \\
   \indent {\it Keywords. Fractional elliptic equations, nonlinear term in the gradient, Hardy potential, stationary Kardar-Parisi-Zhang equations, Existence and Nonexistence results. }   }

 \begin{abstract}

 In this work we study the existence of positive solution to the fractional quasilinear problem,
$$
\left\{
\begin{array}{rcll}
(-\Delta )^s u &=&\lambda \dfrac{u}{|x|^{2s}}+ |\nabla u|^{p}+ \mu f &\inn \Omega,\\
u&>&0 & \inn\Omega,\\
u&=&0 & \inn(\mathbb{R}^N\setminus\Omega),
\end{array}\right.
$$
where $\Omega$ is a $C^{1,1}$ bounded domain in $\mathbb{R}^N$, $N> 2s, \mu>0$, $\frac{1}{2}<s<1$, and $0<\lambda<\Lambda_{N,s}$ is defined in \eqref{bestC}  . We assume  that $f$ is a non-negative function with additional hypotheses.

As we will see, there are deep differences with respect to the case $\lambda=0$. More precisely,
\begin{itemize}
\item If $\lambda>0$, there exists a critical exponent $p_{+}(\lambda, s)$ such that for $p> p_{+}(\lambda,s)$ there is no positive solution.
\item Moreover, $p_{+}(\lambda,s)$ is optimal in the sense that, if $p<p_{+}(\lambda,s)$ there exists a positive solution for suitable data and $\mu$ sufficiently small.
\end{itemize}
 \end{abstract}

\maketitle

\rightline{\textit{To Shair Ahmad in his 85th birthday  with our friendship and recognition. }}

\section{Introduction}
This work deals with the following problem:

\begin{equation}\label{mainn0}
\left\{
\begin{array}{rcll}
(-\Delta )^s u &=&\lambda \dfrac{u}{|x|^{2s}}+ |\nabla u|^{p}+ \mu f &\inn \Omega,\\
u&>&0 & \inn\Omega,\\
u&=&0 & \inn(\mathbb{R}^N\setminus\Omega),
\end{array}\right.
\end{equation}
where $0<\lambda<\Lambda_{N.s}$ defined in \eqref{bestC}, $\mu>0$, $s\in (\frac{1}{2},1)$, $2s<N$, $\Omega\subset \mathbb{R}^N$ is
a bounded regular domain containing the origin and $f$
is a measurable non-negative function satisfying suitable hypotheses.

By $(-\Delta)^s$ we denote the  fractional Laplacian of order $2s$ introduced by M. Riesz in \cite{MRiesz}, that is,
$$(-\Delta)^{s}u(x):=a_{N,s}\mbox{ P.V. }\int_{\mathbb{R}^{N}}{\frac{u(x)-u(y)}{|x-y|^{N+2s}}\, dy},\, \,\qquad  s\in(0,1),$$
where
$$
a_{N,s}=2^{2s-1}\pi^{-\frac N2}\frac{\Gamma(\frac{N+2s}{2})}{|\Gamma(-s)|},
$$
is the normalizing constant that gives the Fourier multiplier identity
$$\mathcal{F}((-\Delta)^s u )(\xi)=|\xi|^{2s}\mathcal{F}(u)(\xi),  \text{for } u\in \mathscr{S}(\mathbb{R}^N).$$
See \cite{FLS} for details. 

For $\lambda=0$, in \cite{APN} (see also  version \cite{APNC}), the authors study natural conditions on $f$ in order to determine the existence  of a positive solution to the problem \eqref{mainn0} depending on the value of $p$. There are three cases: subcritical, $p<2s$, critical $p=2s$ and supercritical $p>2s$.

For $\lambda>0$, the problems studied in this article are related to the following Hardy inequality, proved in \cite{He} (see also \cite{B, FLS, SW, Y} and the monograph \cite{Peral-Soria} for a detailed proof).
\begin{Theorem}\label{DH}{\it (Fractional Hardy inequality).}
For all $u\in \mathcal{C}^{\infty}_{0}(\ren)$ the following inequality holds,
\begin{equation}\label{Hardy}
\dint_{\ren} \,|\xi|^{2s} |\hat{u}|^2\,d\xi\geq
\Lambda_{N,s}\,\dint_{\ren} |x|^{-2s} u^2\,dx,
\end{equation}
where
\begin{equation}\label{bestC}
\Lambda_{N,s}= 2^{2s}\dfrac{\Gamma^2(\frac{N+2s}{4})}{\Gamma^2(\frac{N-2s}{4})}.
\end{equation}
The constant $\Lambda_{N,s}$ is optimal and not attained.
\end{Theorem}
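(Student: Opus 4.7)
The plan is to use the ground state representation method. The starting observation is that the function $w(x)=|x|^{-(N-2s)/2}$ is a formal ``virtual ground state'' for the operator $(-\Delta)^s - \Lambda_{N,s}|x|^{-2s}$. Although $w$ is not in the right space, substitution $u=wv$ into the quadratic form will reveal the inequality together with a nonnegative remainder.

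First, I would establish the key identity
\begin{equation*}
(-\Delta)^s\bigl(|x|^{-\alpha}\bigr)=\Phi(\alpha)\,|x|^{-\alpha-2s},\qquad 0<\alpha<N-2s,
\end{equation*}
with
\begin{equation*}
\Phi(\alpha)=2^{2s}\,\frac{\Gamma\!\left(\tfrac{\alpha+2s}{2}\right)\Gamma\!\left(\tfrac{N-\alpha}{2}\right)}{\Gamma\!\left(\tfrac{\alpha}{2}\right)\Gamma\!\left(\tfrac{N-\alpha-2s}{2}\right)}.
\end{equation*}
This is a direct consequence of the Riesz composition (or Mellin transform) identity $I_{2s}(|x|^{-\alpha-2s})=\Phi(\alpha)^{-1}|x|^{-\alpha}$, which can be verified by homogeneity and the beta integral. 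A short logarithmic differentiation in $\alpha$ shows that $\Phi$ attains its maximum on $(0,N-2s)$ at $\alpha_{\ast}=(N-2s)/2$, and the duplication formula for $\Gamma$ then gives
\begin{equation*}
\Phi(\alpha_{\ast})=2^{2s}\,\frac{\Gamma^2\!\left(\tfrac{N+2s}{4}\right)}{\Gamma^2\!\left(\tfrac{N-2s}{4}\right)}=\Lambda_{N,s}.
\end{equation*}

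Next, set $w(x)=|x|^{-(N-2s)/2}$ and, for $u\in\mathcal{C}^{\infty}_{0}(\mathbb{R}^N)$, write $u=wv$ (the singularity of $w$ at the origin is harmless because $u$ is smooth and compactly supported, so $v$ remains bounded away from the origin and we approximate $u$ by test functions vanishing near $0$). Using the singular integral definition of $(-\Delta)^{s}$ with constant $a_{N,s}$ and Plancherel to rewrite $\int |\xi|^{2s}|\hat u|^2\,d\xi$ as the Gagliardo energy, a direct expansion of $|u(x)-u(y)|^2=|w(x)v(x)-w(y)v(y)|^2$ together with the formal eigenvalue relation $(-\Delta)^{s}w=\Lambda_{N,s}|x|^{-2s}w$ from the first step yields the ground state representation
\begin{equation*}
\int_{\mathbb{R}^N}|\xi|^{2s}|\hat u|^2\,d\xi \;=\; \Lambda_{N,s}\int_{\mathbb{R}^N}\frac{u^2}{|x|^{2s}}\,dx \;+\; \frac{a_{N,s}}{2}\iint_{\mathbb{R}^N\times\mathbb{R}^N}\frac{|v(x)-v(y)|^2}{|x-y|^{N+2s}}\,w(x)w(y)\,dx\,dy.
\end{equation*}
Since the second term is manifestly nonnegative, the Hardy inequality \eqref{Hardy} follows at once. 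Optimality of $\Lambda_{N,s}$ comes from testing with $u_\epsilon(x)=\eta(x)w(x)\phi_\epsilon(x)$, where $\eta$ is a cutoff and $\phi_\epsilon$ truncates $w$ near the origin and at infinity so that $v=\eta\phi_\epsilon$ becomes asymptotically locally constant, forcing the remainder term to vanish in the limit; non-attainment follows from the fact that any minimizer would satisfy $v\equiv{\rm const}$, i.e.\ $u=cw\notin L^2(\mathbb{R}^N)$.

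The delicate step is the rigorous justification of the pointwise identity $(-\Delta)^{s}w=\Lambda_{N,s}|x|^{-2s}w$ in the sense of distributions and its use with $u\in\mathcal{C}^\infty_0$: one needs an approximation argument (replacing $w$ by $w_{\delta}=(|x|^2+\delta^2)^{-(N-2s)/4}$, or working with the Caffarelli--Silvestre extension on the upper half-space and integrating by parts carefully around the singular set $\{x=0\}\times\{y=0\}$) to ensure that the bilinear expansion leading to the ground state representation is legitimate. Everything else reduces to Gamma function identities and standard Lebesgue integration.
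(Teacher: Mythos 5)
The paper does not actually prove Theorem~\ref{DH}; it quotes the inequality and refers to \cite{He,B,FLS,SW,Y,Peral-Soria} for a proof. Your ground-state representation argument is precisely the strategy of \cite{FLS}, one of the cited references, so your approach coincides with what the paper relies on. The sketch is structurally sound: $(-\Delta)^s|x|^{-\alpha}=\Phi(\alpha)|x|^{-\alpha-2s}$ on $0<\alpha<N-2s$ is the standard formula (and is consistent with Lemma~\ref{singularity} of the paper under the substitution $\alpha=\tfrac{N-2s}{2}-\alpha_\lambda$), $\Phi$ is symmetric about $\alpha_*=\tfrac{N-2s}{2}$ and vanishes at the endpoints so its maximum is at $\alpha_*$, and the ground-state representation with $w=|x|^{-\alpha_*}$ gives the inequality together with a manifestly nonnegative remainder; non-attainment then follows since a minimizer would force $v\equiv\mathrm{const}$, i.e.\ $u=cw$, which is neither in $L^2(\mathbb{R}^N)$ nor in the energy space. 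Two minor corrections to the write-up. First, the evaluation $\Phi(\alpha_*)=\Lambda_{N,s}$ does not use the duplication formula: at $\alpha=\tfrac{N-2s}{2}$ the two numerator arguments both equal $\tfrac{N+2s}{4}$ and the two denominator arguments both equal $\tfrac{N-2s}{4}$, so the equality is immediate. Second, in the optimality step the phrasing should be that the remainder
\begin{equation*}
\frac{a_{N,s}}{2}\iint_{\mathbb{R}^N\times\mathbb{R}^N}\frac{|v_\varepsilon(x)-v_\varepsilon(y)|^2}{|x-y|^{N+2s}}\,w(x)w(y)\,dx\,dy
\end{equation*}
stays \emph{bounded} along the trial sequence while the denominator $\int u_\varepsilon^2|x|^{-2s}\,dx=\int v_\varepsilon^2|x|^{-N}\,dx$ diverges logarithmically as the inner and outer cutoffs separate; it is the quotient, not the remainder itself, that vanishes. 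With the approximation argument you flag at the end (regularizing $w$ or using the Caffarelli--Silvestre extension to legitimize the bilinear expansion), this is a complete proof.
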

\

Notice that, as it was stated in \cite{FLS}, the fractional Hardy's inequality plays an important role in the proof of the {\it stability of relativistic matter} in a very general setting.

It is clear that the criticality of the inequality is motivated by the homogeneity between the fractional Laplacian and the {\it inverse $2s$-potential.} Moreover, letting $s\to 1$, then one can prove that
$$\Lambda_{N,s}\to \Lambda_{N,1}:=\left(\dfrac{N-2}{2}\right)^2,$$
the classical Hardy constant.

Notice that the optimal constant defined in \eqref{bestC} coincides for every  bounded domain $\Omega$ containing the pole of the Hardy
potential. That is, if $0\in \Omega$, we can rewrite the Hardy inequality \eqref{Hardy} as
\begin{equation}\label{hardy}
\frac{a_{N,s}}{2}\int_{Q}{\frac{|u(x)-u(y)|^2}{|x-y|^{N+2s}}}\, dx\, dy\geq
\Lambda_{N,s}\int_{\Omega}{\frac{u^2}{|x|^{2s}}\,dx},\,u\in H_{0}^{s}(\Omega).
\end{equation}
The optimality of $\Lambda_{N,s}$ here follows by a scaling argument.

Related to problem \eqref{mainn0}, in the local case $s=1$ and for $0<\l<\Lambda_{N,1}$ fixed, the authors in \cite{APS} identify a critical exponent $p_{+}(\lambda)$ such that for $p\geq p_{+}$, there exists no positive weak solution and for $1<p<p_{+}$, $\mu$ sufficiently small, and $f\leq \dfrac{1}{|x|^2}$, they prove the existence of a weak positive solution.

Problem \eqref{mainn0} can be seen as the stationary  Kardar-Parisi-Zhang problem with fractional diffusion and under the influence of the \textit{uncertainty principle} given by the Hardy inequality.  The  classical model by Kardar-Parisi-Zhang was  introduced in \cite{KPZ} with diffusion driven by the Laplacian. In the fractional setting see \cite{W}.

Our aim in this work is to analyze the case $s\in (\frac{1}{2}, 1)$ and $\lambda>0$. Notice that $s>\frac 12$ ensures the ellipticity of the problem. Our main result is the following one.

\begin{Theorem}\label{mainint}
Assume that $s\in (\frac 12,1)$ and $0<\l<\Lambda_{N,s}$, then there exists a critical exponent $p_{+}(\lambda, s)>0$ such that if $p> p_{+}(\lambda,s)$ there is no positive solution to problem \eqref{mainn0}. Moreover, if $p<p_{+}(\lambda,s)$, problem \eqref{mainn0} has a positive solution for suitable data and $\mu$ sufficiently small.
\end{Theorem}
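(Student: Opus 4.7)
Let $\alpha=\alpha(\lambda,s)\in\bigl(0,\frac{N-2s}{2}\bigr)$ be the ``Hardy exponent'', namely the unique $\alpha$ for which $(-\Delta)^s|x|^{-\alpha}=\lambda\,|x|^{-\alpha-2s}$ in $\mathbb{R}^N\setminus\{0\}$; its existence and uniqueness come from the explicit computation of the fractional Laplacian of a power, see \cite{FLS}. Matching the homogeneities $(-\Delta)^su\sim|x|^{-\alpha-2s}$ and $|\nabla u|^{p}\sim |x|^{-(\alpha+1)p}$ near the origin suggests the candidate
$$
p_+(\lambda,s):=\frac{2s+\alpha(\lambda,s)}{1+\alpha(\lambda,s)},
$$
which collapses to the value $2s$ of \cite{APN} when $\lambda\to 0$, and to the critical exponent of \cite{APS} as $s\to 1$. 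The plan is then to prove nonexistence above this threshold by a singularity-matching test-function argument, and existence below it through an approximation scheme built around a sharp radial barrier.

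\textbf{Nonexistence when $p>p_+(\lambda,s)$.} The first step is to establish that every positive solution $u$ of \eqref{mainn0} satisfies $u(x)\ge c\,|x|^{-\alpha}$ in a neighbourhood of the origin. This is obtained by comparison with the explicit singular solution of the purely linear fractional Hardy problem, using that $u$ is a supersolution of $(-\Delta)^s u\ge \lambda\,u/|x|^{2s}$ and iterating on shrinking balls. Once this lower bound is available, an averaged lower bound on $|\nabla u|^{p}$ follows (the assumption $s>1/2$ ensures that the gradient term is meaningful in the energy framework of $H^s_0(\Omega)$). Testing the equation against a suitable truncation of $|x|^{-(N-2s-\alpha)}$ yields an integral inequality whose left-hand side is finite (by Theorem \ref{DH} applied to the test function and the use of a duality/Picone-type device adapted to $(-\Delta)^s$), while the right-hand side contains the term $c\int|x|^{-(1+\alpha)p}\,dx$, which diverges near the origin precisely when $(1+\alpha)p>2s+\alpha$, i.e.\ $p>p_+(\lambda,s)$.

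\textbf{Existence when $p<p_+(\lambda,s)$.} Pick $\eta>0$ small with $(1+\alpha-\eta)p<2s+\alpha-\eta$ and set $\beta:=\alpha-\eta$. Regularize \eqref{mainn0} by replacing $|x|^{-2s}$ by $(|x|^{2s}+\tfrac{1}{n})^{-1}$ and by truncating $|\nabla u|^{p}$ at level $n$; each approximate problem is solved by a Schauder fixed-point argument on a closed convex subset of $H^s_0(\Omega)$, producing a positive approximate solution $u_n$. As barrier one constructs a radial supersolution of the form $\overline u(x)=A\bigl(|x|^{-\beta}-R^{-\beta}\bigr)_+ +B\,\Phi_s(x)$, where $\Phi_s$ is the $s$-torsion function of $\Omega$; the choice of $\beta$ is precisely what makes $\overline u$ absorb the Hardy term and the gradient nonlinearity, provided $\mu$ is small and $f$ has mild decay at the origin (the ``suitable data'' of the statement). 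Comparison then yields uniform local $L^\infty$ bounds and uniform energy bounds.

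\textbf{Passage to the limit and main obstacle.} The decisive step is the passage to the limit in $|\nabla u_n|^{p}$, which requires strong convergence of $\nabla u_n$ in $L^p_{\mathrm{loc}}$. Following the scheme of \cite{APN}, one derives an extra uniform estimate on $|\nabla u_n|^{p}$ in $L^1$ by testing with $T_k(u_n)$ and with the exponential test $e^{\sigma u_n}-1$, and then upgrades this to almost everywhere convergence of $\nabla u_n$ via a Vitali-type argument combined with the monotonicity of $(-\Delta)^s$. The main difficulty is exactly this last point: the coupling of the nonlocal singular Hardy term with a gradient nonlinearity whose exponent is just below a scale-critical threshold forces every test function and every barrier to be calibrated very sharply with respect to the Hardy exponent $\alpha(\lambda,s)$. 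The restriction $s>1/2$ is indispensable throughout, since it is what gives rigorous sense to $|\nabla u|$ inside the natural energy space for $(-\Delta)^s$.
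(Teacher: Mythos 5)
Your formula $p_+(\lambda,s)=\frac{2s+\mu(\lambda)}{1+\mu(\lambda)}$ (with $\mu(\lambda)=\frac{N-2s}{2}-\alpha_\lambda$, which is what you call $\alpha$), the heuristic derivation by matching homogeneities, and the lower bound $u\ge c|x|^{-\mu(\lambda)}$ near $0$ from Lemma \ref{estim0} are all correct and match the paper.

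The nonexistence argument, however, has a genuine gap. Testing against a truncation of $|x|^{-\bar\mu(\lambda)}$, $\bar\mu=N-2s-\mu$, and invoking divergence of $\int|\nabla u|^p\phi$ fails on two counts. First, with the lower bound $u\ge c|x|^{-\mu}$ the Hardy term $\lambda\int u\phi/|x|^{2s}\ge c\int_{B_r}|x|^{-N}\,dx=\infty$ would appear on \emph{both} sides of the tested identity (since $(-\Delta)^s|x|^{-\bar\mu}=\lambda|x|^{-\bar\mu-2s}$), so no information is gained. Second, and more fundamentally, a pointwise lower bound on $u$ does \emph{not} produce any lower bound, averaged or otherwise, on $|\nabla u|$; there is no reason for $\int|\nabla u|^p\phi$ to diverge. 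The paper's Theorem \ref{non1} never bounds $|\nabla u|$ from below. It splits into four ranges of $p$; for the hardest one, $p_+<p\le 2s$, it solves the dual weighted $p$-Laplacian problem $-\dive(\phi_\theta^a|\nabla\psi_\theta|^{p-2}\nabla\psi_\theta)=\theta$ (Lemma \ref{aux}), tests with $\phi_\theta$, rewrites $\int u\theta$ through $\psi_\theta$, and absorbs the gradient term by Young's inequality, reaching
$$\lambda\int_{B_r(0)}\frac{\phi_\theta}{|x|^{2s+\mu(\lambda)}}\,dx\le C\int_\Omega\phi_\theta^{(a-1)p'+1}|\nabla\psi_\theta|^p\,dx.$$
Choosing $\theta=(|x|^{-m}-1)_+$ with $m>N-\mu(\lambda)$, the right side is finite exactly when $(m-2s)+p'(2s-1)<N$, i.e.\ $p>p_+(\lambda,s)$, while the left side is $+\infty$; the other ranges of $p$ are treated by testing with powers of $u$ and bootstrapping in $W^{\sigma,p}$ norms. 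Your sketch addresses none of this and, as written, the divergence criterion $(1+\alpha)p>2s+\alpha$ does not even govern the convergence of $\int|x|^{-(1+\alpha)p}\,dx$, for which one needs $(1+\alpha)p\ge N$.

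For existence, your supersolution-plus-Schauder outline is in the same spirit as the paper's, but the limit passage in $|\nabla u_n|^p$ is only asserted. The paper does not use exponential test functions (they do not interact well with $(-\Delta)^s$); its decisive tool is the Green-function gradient estimate $|\nabla_x\mathcal G_s(x,y)|\le C\,\mathcal G_s(x,y)\max\{|x-y|^{-1},\delta(x)^{-1}\}$ from Lemma \ref{estimmm}, which yields $|\nabla u_n(x)|^\alpha\le\left(\int h^\alpha\mathcal G_s\,g_n\right)u_n^{\alpha-1}$ by H\"older and, combined with the explicit barrier $w=A|x|^{-\theta}$, delivers the uniform $W^{1,\alpha}_0(\Omega)$ bound needed for Vitali's theorem. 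Without that representation formula your convergence step is not justified.
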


The paper is organized as follows. In Section \ref{2}, we give the notion of solution that we are going to consider here. Moreover, we study the behavior of radial potential solutions of the homogenous problem in the whole space.
Section \ref{3} is devoted to the non existence of solutions. In that respect, we obtain two types of non existence results.
\begin{itemize}
\item On the one hand, we prove the existence of $p_{+}(\lambda, s)$ such that if $s\in (\frac{1}{2}, 1)$ and $p>p_{+}(\lambda, s)$, for all $\lambda>0$, the problem has no positive solution in a weak sense.
\item On the other, we prove that for $s\in (\frac{1}{2}, 1)$, there exists ${\mu^{*}}>0$
such that if $\mu>\mu^{*}$, the problem has no positive solution for any $p$;
that is, the positive source term must be small enough to ensure the existence of solutions.
\end{itemize}
Section \ref{4} is devoted precisely to the existence of solutions.
For  $p<p_{+} (\lambda,s)$ and under additional hypotheses
on the integrability of $f$, we are able to build a suitable supersolution and
then by a monotonicity argument,  to
prove the existence of a minimal positive solution for all $\mu$.
Moreover, for $p<\dfrac{N}{N-2s+1}$, and for all $f\in L^{1}(\Omega)$  that satisfies a suitable integral condition near
the origin, we prove the existence of $\mu^{*}$ such that for $\mu<\mu^{*}$, there exists
a positive solution.

In the last section, we treat the case where the gradient term depends also on a  zero order term. In this case under a suitable behavior of the {\it zero order term} at infinity, we are able to show the existence of a solution for all $p<2s$, under suitable hypotheses on the data. It is worthy to point out that,in the local case, this last problem  comes from the elliptic part of a porous medium equation, see \cite{AGMP}.

\section{Preliminary results}\label{2}
Before starting the analysis of existence and non existence of positive solution,
let us begin describing the precise sense in which solutions are defined.
Consider the problem
\begin{equation}\label{weak0}
\left\{
\begin{array}{rcll}
(-\Delta)^s u &= & g &
\text{ in }\Omega, \\ u&>&0 &\hbox{
in }\Omega,\\ u &=& 0 &\hbox{  in } \mathbb{R}^N\setminus\Omega,
\end{array}%
\right.
\end{equation}
where $g\in L^1(\O)$.
\begin{Definition}\label{weak-sol} We define the class of test functions
\begin{equation}\label{test}
\mathcal{T} (\Omega)=\{ \phi\,\,| \,\, (-\Delta)^s (\phi)=\psi \hbox{ in  } \Omega,\quad \phi=0 \hbox{ in  } \mathbb{R}^N\setminus\Omega, \quad\psi\in \mathcal{C}^\infty_0(\Omega)\}.
\end{equation}
\end{Definition}
Notice that if $v\in \mathcal{T} (\Omega)$ then, using the results in \cite{LPPS},    $v\in H^s_0 (\Omega) \cap L^\infty(\Omega)$. Moreover,  according to the regularity theory developed in \cite{S1}, if $\Omega$ is smooth enough, there exists a  constant $\beta>0$ (that depends only on the structural constants) such that $v\in \mathcal{C}^\beta(\Omega)$ (see also \cite{Kas}).
\begin{Definition}\label{def1}
We say that $u\in L^1(\Omega)$ is a {\it weak solution} to \eqref{weak0} if for  $g\in L^{1}(\Omega)$ we have that
$$\int_\Omega u   \psi dx=\int_\Omega g \phi dx, $$
for any $\phi\in \mathcal{T}  (\Omega)$ with $\psi \in \mathcal{C}^\infty_0(\Omega)$.
\end{Definition}

Recall also the definition of the truncation operator $T_k$,
\begin{equation}\label{Tk}
T_k (\sigma) = \max\{-k ; \min\{ k, \sigma \}\}.
\end{equation}
From \cite{LPPS}, \cite{CV1} and \cite{AAB} we have the next existence result.

\begin{Theorem}\label{entropi}
Suppose that $g\in L^1(\O)$, then problem \eqref{weak0}  has a unique weak solution $u$ obtained as the limit of $\{u_n\}_{n\in \mathbb{N}}$, the sequence of  unique solutions to the approximating problems
\begin{equation}\label{proOO}
\left\{\begin{array}{rcll}
(-\Delta)^s u_n &= & g_n(x) & \mbox{  in  }\O,\\
u_n &= & 0 & \mbox{ in } \ren\backslash\O,
\end{array}
\right.
\end{equation}
with $g_n=T_n(g)$. Moreover,
\begin{equation} \label{tku}
T_k(u_n)\to T_k(u)\hbox{  strongly in }   H^{s}_{0}(\Omega), \quad \forall k > 0,
\end{equation}
\begin{equation} \label{L1u}
u \in L^q \,, \qquad  \forall  \ q\in \big(1, \frac{N}{N-2s}\big)\,
\end{equation}
 and
\begin{equation}\label{L1du}
\big|(-\Delta)^{\frac{s}{2}}   u\big| \in L^ r \,, \qquad \forall  \  r \in \big(1,  \frac{N}{N-s} \big) \,.
\end{equation}
In addition, if $s>\frac 12$, then $u\in W^{1,q}_0(\O)$ for all $q<\frac{N}{N-(2s-1)}$ and $u_n\to u$ strongly in $W^{1,q}_0(\O)$.
\end{Theorem}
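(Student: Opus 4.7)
The plan is to follow the standard Stampacchia/$L^1$-data scheme adapted to the nonlocal setting, citing the structural tools from \cite{LPPS,CV1,AAB} that the authors have already invoked. First I would set up the approximating problems \eqref{proOO}. Since $g_n=T_n(g)\in L^\infty(\Omega)$, variational theory in $H^s_0(\Omega)$ produces a unique energy solution $u_n\in H^s_0(\Omega)\cap L^\infty(\Omega)$, and by monotonicity of $g\mapsto T_n(g)$ together with the weak comparison principle for $(-\Delta)^s$, the sequence $\{u_n\}$ is monotone nondecreasing in $|g|$; in particular it is nonnegative when $g\ge 0$ and it is dominated by the solution corresponding to $|g|$.

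Next I would derive the a priori bounds uniform in $n$. Testing \eqref{proOO} with $T_k(u_n)\in H^s_0(\Omega)$ and using the pointwise Kato-type inequality
\[
(-\Delta)^s u_n\cdot T_k(u_n)\;\ge\;(-\Delta)^s G_k(u_n)
\]
(with $G_k$ the primitive of $T_k$) yields $\|T_k(u_n)\|_{H^s_0}^2\le k\|g\|_{L^1}$. From this, Stampacchia's nonlocal argument, exactly as in \cite{LPPS,CV1}, gives Marcinkiewicz estimates for $u_n$ and for $(-\Delta)^{s/2}u_n$, so that
\[
\|u_n\|_{L^q(\Omega)}\le C(q)\|g\|_{L^1},\qquad q\in\bigl(1,\tfrac{N}{N-2s}\bigr),
\]
\[
\|(-\Delta)^{s/2}u_n\|_{L^r(\Omega)}\le C(r)\|g\|_{L^1},\qquad r\in\bigl(1,\tfrac{N}{N-s}\bigr),
\]
which are exactly \eqref{L1u}--\eqref{L1du}. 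When $s>\tfrac12$, the Riesz-type identification of $|\nabla u_n|$ with a fractional derivative of order $s$ (via the boundedness of $(-\Delta)^{(1-s)/2}$ acting on $(-\Delta)^{s/2}u_n$ within the appropriate Lebesgue scale) upgrades the above to $\|u_n\|_{W^{1,q}_0(\Omega)}\le C(q)\|g\|_{L^1}$ for $q<\tfrac{N}{N-(2s-1)}$, which is the last assertion of the theorem.

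Once the uniform bounds are in hand, passing to the limit is routine. By the monotonicity of $\{u_n\}$ (or, absent a sign assumption, by the $L^q$-reflexivity and standard diagonal extraction) we obtain a limit $u$; the Marcinkiewicz bounds together with the compact embedding $H^s_0(\Omega)\hookrightarrow L^2(\Omega)$ upgrade convergence of the truncations $T_k(u_n)\to T_k(u)$ to the strong $H^s_0$-convergence \eqref{tku}, following the nonlocal analogue of the Boccardo--Gallou\"et argument carried out in \cite{CV1,AAB}. To check that $u$ is a weak solution in the sense of Definition \ref{def1}, I would fix $\phi\in\mathcal{T}(\Omega)$ with datum $\psi\in\mathcal{C}^\infty_0(\Omega)$, write $\int_\Omega u_n\psi\,dx=\int_\Omega g_n\phi\,dx$ using that $\phi\in H^s_0(\Omega)\cap L^\infty(\Omega)$ (so it is admissible as a test for the bounded problem \eqref{proOO}), and then pass to the limit: the right-hand side by dominated convergence since $\phi\in L^\infty$ and $g_n\to g$ in $L^1$, and the left-hand side by the $L^q$-convergence of $u_n$ together with $\psi\in\mathcal{C}^\infty_0$.

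The subtlest point, and the one I expect to be the main obstacle, is \emph{uniqueness}. The standard test $T_k(u_1-u_2)$ is not admissible in Definition \ref{def1}, which restricts test functions to $\mathcal{T}(\Omega)$. The remedy is a duality argument: for any $\psi\in\mathcal{C}^\infty_0(\Omega)$ let $\phi_\psi\in\mathcal{T}(\Omega)$ solve $(-\Delta)^s\phi_\psi=\psi$; the regularity theory in \cite{LPPS,S1} (quoted just after \eqref{test}) guarantees $\phi_\psi\in H^s_0\cap L^\infty\cap\mathcal{C}^\beta$. If $u_1,u_2$ are two weak solutions with the same datum $g$, subtracting the weak formulations yields $\int_\Omega(u_1-u_2)\psi\,dx=0$ for every $\psi\in\mathcal{C}^\infty_0(\Omega)$, forcing $u_1=u_2$ a.e. Combined with the uniform bounds, this also shows that the whole sequence $\{u_n\}$, not just a subsequence, converges to the unique $u$, completing the proof.
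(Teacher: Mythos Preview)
The paper does not give its own proof of this theorem: it is stated as a known result, prefaced by ``From \cite{LPPS}, \cite{CV1} and \cite{AAB} we have the next existence result.'' Your sketch is a faithful outline of exactly the Stampacchia/$L^1$-data scheme carried out in those references (truncation test functions, Marcinkiewicz bounds, Boccardo--Gallou\"et-type compactness, and duality for uniqueness), so there is nothing to compare --- your approach \emph{is} the approach of the cited sources, and it is correct at the level of detail you give.
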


Now, before dealing with the main problem \eqref{main0}, let us introduce the following definition.
\begin{Definition}\label{def2}
Assume that $f\in L^1(\O)$ is a nonnegative function. We say that $u$ is
a solution to problem \eqref{mainn0} if $u\in W^{1,p}_0(\Omega), \dfrac{u}{|x|^{2s}}\in L^1(\O)$ and, setting
$g\equiv \lambda \dfrac{u}{|x|^{2s}}+|\n u|^p+f$, then $u$ is
a weak solution to problem \eqref{weak0} in the sense of Definition \ref{def1}.
\end{Definition}

In order to study the behavior in a neighborhood of the origin of a nonnegative solution to problem \eqref{mainn0}, we need to analyze each radial \textit{potential} positive solution in the whole space. More precisely, let us consider the homogeneous problem
\begin{equation}\label{problemHardy}
(-\Delta)^{s} u=\l\dfrac{\,u}{|x|^{2s}} \mbox{ in } \mathbb{R}^N\setminus\{0\},
\end{equation}
where $0<\lambda\leq \Lambda_{N,s}$. Then we have (see for instance \cite[Theorem 4.1]{SW})
\begin{Lemma} \label{singularity} Let $0<\lambda\leq \Lambda_{N,s}$. Then $v_{\pm\alpha_{\lambda}}(x)=|x|^{-\frac{N-2s}{2}\pm\alpha_{\lambda}}$ are
solutions to problem \eqref{problemHardy}, where $\alpha{_\lambda}$ is obtained by the identity
\begin{equation}\label{lambda}
\lambda=\lambda(\alpha_{\lambda})=\lambda(-\alpha_{\lambda})=\dfrac{2^{2s}\,\Gamma(\frac{N+2s+2\alpha_{\lambda}}{4})\Gamma(\frac{N+2s-2\alpha_{\lambda}}{4})}{\Gamma(\frac{N-2s+2\alpha_{\lambda}}{4})\Gamma(\frac{N-2s-2\alpha_{\lambda}}{4})}.
\end{equation}
\end{Lemma}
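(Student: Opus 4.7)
The plan is to reduce the lemma to the classical computation of the fractional Laplacian on a pure power function. Because $(-\Delta)^s$ commutes with rotations and scales like a homogeneous operator of order $2s$, one expects
\begin{equation*}
(-\Delta)^s \bigl(|x|^{-\gamma}\bigr) = \Psi_s(\gamma)\,|x|^{-\gamma-2s} \qquad \text{in } \mathbb{R}^N\setminus\{0\},
\end{equation*}
for some explicit function $\Psi_s(\gamma)$ (defined for $\gamma$ in a suitable range). The most direct way to obtain $\Psi_s$ is to take Fourier/Riesz transforms: recall that $\widehat{|x|^{-\gamma}}$ is (up to a constant) $|\xi|^{\gamma-N}$, apply the multiplier $|\xi|^{2s}$, and transform back, using twice the identity $\mathcal{F}(|x|^{-\alpha})(\xi)=c_{N,\alpha}|\xi|^{\alpha-N}$ with $c_{N,\alpha}=2^{N-\alpha}\pi^{N/2}\Gamma(\tfrac{N-\alpha}{2})/\Gamma(\tfrac{\alpha}{2})$. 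After simplification one gets
\begin{equation*}
\Psi_s(\gamma) \;=\; 2^{2s}\,\frac{\Gamma\!\left(\frac{\gamma+2s}{2}\right)\Gamma\!\left(\frac{N-\gamma}{2}\right)}{\Gamma\!\left(\frac{\gamma}{2}\right)\Gamma\!\left(\frac{N-\gamma-2s}{2}\right)}.
\end{equation*}
I would quote this formula from the monograph \cite{Peral-Soria} (or from Frank–Lieb–Seiringer \cite{FLS}) rather than reproving it.

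Once $\Psi_s$ is in hand, the lemma becomes pure algebra. Setting $\gamma = \frac{N-2s}{2}\mp\alpha_\lambda$, one has $v_{\pm\alpha_\lambda}(x)=|x|^{-\gamma}$, so $v_{\pm\alpha_\lambda}$ solves \eqref{problemHardy} if and only if $\Psi_s(\gamma)=\lambda$. Substituting this particular $\gamma$ into the four arguments of the Gamma functions gives
\begin{equation*}
\frac{\gamma+2s}{2}=\frac{N+2s\mp 2\alpha_\lambda}{4},\quad \frac{N-\gamma}{2}=\frac{N+2s\pm 2\alpha_\lambda}{4},
\end{equation*}
\begin{equation*}
\frac{\gamma}{2}=\frac{N-2s\mp 2\alpha_\lambda}{4},\quad \frac{N-\gamma-2s}{2}=\frac{N-2s\pm 2\alpha_\lambda}{4},
\end{equation*}
and these expressions exactly match the Gamma arguments appearing in \eqref{lambda}. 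Notice that the top/bottom $\pm/\mp$ pattern is symmetric under $\alpha_\lambda \mapsto -\alpha_\lambda$; this is precisely the built-in symmetry $\gamma \leftrightarrow N-2s-\gamma$ of $\Psi_s$, and it explains why both exponents $\pm\alpha_\lambda$ produce the same $\lambda$, justifying the equality $\lambda(\alpha_\lambda)=\lambda(-\alpha_\lambda)$ in \eqref{lambda}.

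The only genuine obstacle is establishing the power-function formula for $(-\Delta)^s$. If one wanted to avoid invoking it as a black box, one would verify it by direct computation of the principal-value integral defining $(-\Delta)^s|x|^{-\gamma}$ at $x=e_1$, reducing the integration over $\mathbb{R}^N$ to a one-dimensional Beta integral through spherical coordinates and a change of variable $r=|y|/|x|$; the resulting integrals are standard representations of the quotient of Gamma functions above. The range $0<\gamma<N-2s$ in which the computation is classically valid includes both values $\gamma=\frac{N-2s}{2}\pm\alpha_\lambda$ because $|\alpha_\lambda|<\frac{N-2s}{2}$ when $0<\lambda\leq \Lambda_{N,s}$, a fact that follows from the monotonicity of the one-variable map $\alpha\mapsto \lambda(\alpha)$ on $[0,\tfrac{N-2s}{2}]$ and the extremal case $\alpha=0$, which gives exactly $\Lambda_{N,s}$ in \eqref{bestC}. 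After this verification, no further work is required.
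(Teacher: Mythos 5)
Your proof is correct. The paper itself offers no argument here, deferring instead to \cite[Theorem~4.1]{SW}, and the Fourier-multiplier computation you carry out (transform $|x|^{-\gamma}$ to a multiple of $|\xi|^{\gamma-N}$, apply $|\xi|^{2s}$, transform back, and simplify the ratio of the normalizing constants into the stated quotient of Gamma functions) is precisely the standard derivation underlying that reference and the monograph \cite{Peral-Soria}; your substitution $\gamma=\frac{N-2s}{2}\mp\alpha_\lambda$ lands exactly on the four Gamma arguments in \eqref{lambda}, and your remark on the symmetry $\gamma\leftrightarrow N-2s-\gamma$ correctly explains $\lambda(\alpha_\lambda)=\lambda(-\alpha_\lambda)$. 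The only thing worth tightening is the final sentence on the admissible range: to place both exponents in $(0,N-2s)$ one uses the monotonicity of $\alpha\mapsto\lambda(\alpha)$ together with the behavior at \emph{both} endpoints ($\lambda(0)=\Lambda_{N,s}$ and $\lambda(\alpha)\to 0$ as $\alpha\to\frac{N-2s}{2}^-$), which is exactly the content of Lemma~\ref{LambdaVsSing} in the paper.
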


\begin{remark}
Notice that $\lambda(\alpha)= \lambda(-\alpha)=m_{\alpha_{\lambda}}m_{-\alpha_{\lambda}}$, with $m_{\alpha_{\lambda}}= 2^{\alpha_{\lambda}+s}\dfrac{\Gamma(\frac{N+2s+2\alpha_{\lambda}}{4})}{\Gamma(\frac{N-2s-2\alpha_{\lambda}}{4})}$.
\end{remark}

\begin{Lemma}\label{LambdaVsSing}
The following equivalence holds true:
$$
0<\lambda(\alpha_{\lambda})=\lambda(-\alpha_{\lambda})\leq \Lambda_{N,s}\mbox{ if and only if } 0\leq \alpha_{\lambda}<\dfrac{N-2s}{2}.
$$
\end{Lemma}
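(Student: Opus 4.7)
The plan is to show that the map $\alpha \mapsto \lambda(\alpha)$ defined by \eqref{lambda} is a strictly decreasing bijection from $[0,\frac{N-2s}{2})$ onto $(0,\Lambda_{N,s}]$. By the evenness of $\lambda$ in $\alpha$, this will immediately give the stated equivalence.

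First I would record the endpoint values. Direct substitution of $\alpha=0$ into \eqref{lambda} yields $\lambda(0)=\Lambda_{N,s}$, matching \eqref{bestC}. For the other endpoint, as $\alpha\to\bigl(\frac{N-2s}{2}\bigr)^-$, the argument $\frac{N-2s-2\alpha}{4}$ of the $\Gamma$ in the denominator approaches $0^+$, so $\Gamma\bigl(\frac{N-2s-2\alpha}{4}\bigr)\to+\infty$ while the remaining $\Gamma$-factors stay bounded and positive. Hence $\lambda(\alpha)\to 0^+$. Evenness $\lambda(\alpha)=\lambda(-\alpha)$ is read off directly from \eqref{lambda}.

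The heart of the argument is strict monotonicity of $\lambda$ on $[0,\frac{N-2s}{2})$. Setting $y=\frac{N+2s}{4}$, $z=\frac{N-2s}{4}$, and $t=\alpha/2$, one can rewrite
\[
\log\!\Bigl(\tfrac{\lambda(\alpha)}{2^{2s}}\Bigr)=\bigl[\log\Gamma(y+t)+\log\Gamma(y-t)\bigr]-\bigl[\log\Gamma(z+t)+\log\Gamma(z-t)\bigr].
\]
Differentiating in $\alpha$ (equivalently in $t$, up to the factor $1/2$) gives
\[
\tfrac{d}{d\alpha}\log\lambda(\alpha)=\tfrac{1}{2}\bigl[\bigl(\psi(y+t)-\psi(y-t)\bigr)-\bigl(\psi(z+t)-\psi(z-t)\bigr)\bigr],
\]
where $\psi=\Gamma'/\Gamma$ is the digamma function. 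The key monotonicity fact is that $\psi'$ is strictly decreasing on $(0,\infty)$, which follows at once from the series representation $\psi'(x)=\sum_{n\geq 0}(x+n)^{-2}$. Consequently the function $w\mapsto \psi(w+t)-\psi(w-t)$ has derivative $\psi'(w+t)-\psi'(w-t)<0$ for $t>0$, so it is strictly decreasing in $w$ on $(t,\infty)$. Since $y>z>t$ in our range (this uses $t=\alpha/2<\frac{N-2s}{4}=z$, i.e.\ $\alpha<\frac{N-2s}{2}$), we conclude
\[
\psi(y+t)-\psi(y-t)<\psi(z+t)-\psi(z-t),
\]
so $\frac{d}{d\alpha}\log\lambda(\alpha)<0$ for $\alpha\in(0,\frac{N-2s}{2})$. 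Thus $\lambda$ is strictly decreasing on that interval.

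Combining these facts, $\lambda$ is a strictly decreasing continuous bijection from $[0,\frac{N-2s}{2})$ onto $(0,\Lambda_{N,s}]$, which is exactly the stated equivalence after recalling the symmetry $\lambda(\alpha_\lambda)=\lambda(-\alpha_\lambda)$. The only delicate step is the monotonicity: my expected obstacle was to compare the two $\Gamma$-quotients, and the clean resolution is to reduce the comparison to the well-known strict decrease of $\psi'$ on the positive axis.
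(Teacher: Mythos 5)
Your proof is correct, and it is genuinely useful here because the paper itself does not prove this lemma; it simply refers the reader to \cite{AMPP,FLS,He} for ``an elementary proof.'' Your argument is a clean, self-contained version of what those references do: reduce the question to the strict concavity of $\log\Gamma$ on $(0,\infty)$ (equivalently, the strict monotonicity of the trigamma function $\psi'$, visible from the series $\psi'(x)=\sum_{n\ge 0}(x+n)^{-2}$), and observe that the constraint $\alpha<\frac{N-2s}{2}$ is precisely what makes all four $\Gamma$-arguments positive and what places both $y=\frac{N+2s}{4}$ and $z=\frac{N-2s}{4}$ in the region $w>t$ where $w\mapsto\psi(w+t)-\psi(w-t)$ is decreasing. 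One small cosmetic remark: at $\alpha=0$ the derivative you computed vanishes (since $t=0$), so strict monotonicity holds on the open interval $(0,\frac{N-2s}{2})$; combined with the endpoint value $\lambda(0)=\Lambda_{N,s}$ and continuity this is exactly what is needed, and you said as much, but it is worth stating explicitly that strictness at the left endpoint is not required. Everything else checks out: the endpoint limit $\lambda(\alpha)\to 0^+$ as $\alpha\to\bigl(\frac{N-2s}{2}\bigr)^-$ follows from $\Gamma(w)\to+\infty$ as $w\to 0^+$, and the evenness in $\alpha$ is immediate from the formula.
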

For an elementary proof of this Lemma see \cite{AMPP, FLS, He}.

\begin{remark}\label{gamma1}
Denote
\begin{equation}\label{g1}
\mu(\l)= \dfrac{N-2s}{2}-\alpha_{\lambda} \hbox{ and } \bar\mu(\l)= \dfrac{N-2s}{2}+\alpha_{\lambda}.
\end{equation}
For $0<\lambda<\Lambda_{N,s}$, then $0<\mu< \dfrac{N-2s}{2}<\bar\mu<(N-2s)$. Since $N-2\mu-2s={2}\alpha_{\lambda}>0$ and $N-2\bar\mu-2s=-{2}\alpha_{\lambda}<0$, then {$(-\Delta)^{s/2}(|x|^{-\mu})\in L^2(\Omega)$, but $(-\Delta)^{s/2}(|x|^{-\bar\mu})$ does not.}
\end{remark}
As a consequence we have the next comparison lemma.
\begin{Lemma}\label{estim0}
Assume that $u\in L^1_{loc}(\ren)$ is such that $u\ge 0$ in $\ren$ with $(-\D)^su\in L^1_{loc}(\O)$. Suppose that
$$
(-\Delta)^{s} u\ge \l\dfrac{\,u}{|x|^{2s}} \mbox{ in } \Omega,\quad  0<\lambda<\Lambda_{N,s},
$$
then
$$
u(x)\ge C|x|^{-\mu(\lambda)}= C|x|^{-\frac{N-2s}{2}+\alpha_{\lambda}} \mbox{  in } B_r(0)\subset\subset \O.
$$
\end{Lemma}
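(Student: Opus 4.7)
The strategy is to compare $u$ from below against the explicit positive singular solution $\Psi(x) := |x|^{-\mu(\l)}$ of the homogeneous Hardy equation exhibited in Lemma~\ref{singularity}. By Remark~\ref{gamma1}, $\mu(\l)<\tfrac{N-2s}{2}$, so $\Psi$ is precisely the less singular of the two homogeneous profiles, with $(-\D)^{s/2}\Psi\in L^2_{loc}$; this is the natural profile along which to place a lower barrier. The whole argument will rest on the weak comparison principle for the Hardy operator $L_\l := (-\D)^s - \l|x|^{-2s}$, which is valid on small balls because the condition $0<\l<\L_{N,s}$ combined with \eqref{hardy} makes $L_\l$ coercive on $H_0^s$.

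I would first reduce to the case $u\not\equiv 0$ (otherwise the statement is trivial with $C=0$). Since $u$ is nonnegative with $(-\D)^s u\ge \l u|x|^{-2s}\ge 0$ in $\Omega$, the strong maximum principle for the fractional Laplacian yields $u>0$ a.e.\ in $\Omega$. The nonlocal weak Harnack inequality (applied on a ball staying away from the origin) then provides a constant $c_0>0$ and an annular neighbourhood $A = B_{3r}(0)\setminus \overline{B_{2r}(0)} \subset\!\subset \Omega$ on which $u\ge c_0$ a.e. This lower bound on $A$ will be the source of the constant $C$ in the estimate.

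Next I would choose $c_1>0$ small and set $w := u - c_1\,\Psi$ in $\Omega\setminus\{0\}$. By Lemma~\ref{singularity},
\begin{equation*}
(-\D)^s w \;=\; (-\D)^s u \,-\, c_1\,\l\,\Psi/|x|^{2s} \;\ge\; \l\,w/|x|^{2s}\qquad \text{in } \O\setminus\{0\},
\end{equation*}
so $L_\l w\ge 0$ in $\O\setminus\{0\}$. The obstruction to applying comparison directly is that $\Psi$ does not vanish outside $B_{2r}$, so one cannot compare $w$ and $0$ in a bounded-domain setting. I would remedy this by replacing $\Psi$ by the truncation $\Psi_{2r}(x):=(|x|^{-\mu(\l)} - (2r)^{-\mu(\l)})_+\chi_{B_{2r}}$, which is supported in $B_{2r}$. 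For $c_1$ small (namely $c_1(2r)^{-\mu(\l)}\le c_0$) we then have $u - c_1\Psi_{2r}\ge 0$ on $\R^N\setminus B_{2r}$, using the annular bound on $A$ and $u\ge 0$ elsewhere. The nonlocal correction produced by the truncation, $c_1[(-\D)^s\Psi - (-\D)^s \Psi_{2r}]$, is bounded in $B_{2r}\setminus\{0\}$ and can be compensated by lowering $c_1$ further, uniformly in the comparison.

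The main obstacle is handling the singular point $x=0$, where the truncation generates an error that a priori competes with the Hardy term. I would carry out the comparison on the punctured annuli $B_{2r}\setminus\overline{B_\eps}$ for $\eps\to 0^+$, using the coercivity afforded by $\l<\L_{N,s}$ to absorb both the tail correction and the contribution from $\partial B_\eps$ uniformly in $\eps$, and then pass to the limit by Fatou, justified by the fact that $\Psi$ sits in the sharp Hardy energy space by Remark~\ref{gamma1}. The output is $u\ge c_1\Psi_{2r}$ a.e.\ in $B_{2r}$, which on the smaller ball $B_r(0)$ reads $u(x)\ge C|x|^{-\mu(\l)}$ for a constant $C$ depending on $c_0$, $r$, $\l$ and $\L_{N,s}$, as claimed.
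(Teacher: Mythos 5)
The paper itself does not prove Lemma~\ref{estim0}; it defers to reference [AMPP], so there is no in-paper argument to compare against directly. Your high-level idea (compare $u$ from below with the explicit less-singular profile $\Psi(x)=|x|^{-\mu(\lambda)}$, after a Harnack step away from the origin) is the natural one and in the spirit of the cited proof. However, the barrier construction you propose has a genuine sign problem that is not resolved by the devices you invoke.

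Write $g:=\Psi-\Psi_{2r}$. In $B_{2r}$ one has $g\equiv(2r)^{-\mu(\lambda)}$ (a positive constant), while outside $B_{2r}$ one has $g=\Psi$. Since $g$ attains its maximum $(2r)^{-\mu(\lambda)}$ identically on $B_{2r}$, for $x\in B_{2r}$ one gets $(-\Delta)^s g(x)\ge 0$ and, for $x$ in any compact subset of $B_{2r}$, $(-\Delta)^s g(x)$ is bounded. Writing $L_\lambda:=(-\Delta)^s-\lambda|x|^{-2s}$ and using $L_\lambda\Psi=0$ in $\mathbb{R}^N\setminus\{0\}$, one computes
\begin{equation*}
L_\lambda \Psi_{2r}(x)
= \lambda\,\frac{g(x)}{|x|^{2s}} - (-\Delta)^s g(x)
= \lambda\,\frac{(2r)^{-\mu(\lambda)}}{|x|^{2s}} - (-\Delta)^s g(x),
\qquad x\in B_{2r}\setminus\{0\}.
\end{equation*}
The first term is positive and \emph{unbounded} as $|x|\to 0$, whereas the second is bounded. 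Hence $\Psi_{2r}$ is a strict \emph{supersolution} of $L_\lambda=0$ near the origin, not a subsolution, and
\begin{equation*}
L_\lambda\bigl(u-c_1\Psi_{2r}\bigr)\ \ge\ -c_1\Bigl(\lambda\,\frac{(2r)^{-\mu(\lambda)}}{|x|^{2s}} - (-\Delta)^s g\Bigr)
\end{equation*}
fails to be nonnegative near $x=0$, no matter how small $c_1$ is chosen: multiplying by $c_1$ scales the whole comparison uniformly and does not repair the sign. Your statement that the truncation error ``is bounded in $B_{2r}\setminus\{0\}$'' accounts only for $(-\Delta)^s g$ and overlooks the dominant Hardy-term contribution $\lambda g/|x|^{2s}$. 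The proposed remedy of working on punctured annuli $B_{2r}\setminus\overline{B_\varepsilon}$ and invoking coercivity does not close the gap either: the maximum principle on that domain would require $u\ge c_1\Psi_{2r}$ a.e.\ on $\overline{B_\varepsilon}$, which is precisely the unknown statement at smaller radius, and the positive source $\lambda(2r)^{-\mu(\lambda)}|x|^{-2s}$ in the equation for $c_1\Psi_{2r}-u$ cannot be absorbed by the Hardy coercivity since it points in the wrong direction.

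To repair the barrier, one needs a comparison function that is a genuine subsolution with a quantitative negative excess dominating the truncation error. A standard way to achieve this is to perturb the profile: take $0<\mu'<\mu(\lambda)$ and consider $\Phi:=|x|^{-\mu(\lambda)}+\kappa|x|^{-\mu'}$ for small $\kappa>0$. Using Lemma~\ref{singularity}, $L_\lambda\Phi=\kappa\bigl(\gamma_{\beta'}-\lambda\bigr)|x|^{-\mu'-2s}$ with $\beta'=\tfrac{N-2s}{2}-\mu'>\alpha_\lambda$, so $\gamma_{\beta'}<\lambda$ and the excess is negative of order $|x|^{-\mu'-2s}$, which dominates the truncation error of order $|x|^{-2s}$ as $|x|\to0$. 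With this modified barrier the truncated comparison goes through. Alternatively, one can avoid barriers entirely and use the Green function representation $u(x)\ge \lambda\int_{B_{2r}} \mathcal{G}_s^{B_{2r}}(x,y)\,u(y)|y|^{-2s}\,dy$, starting from $u\ge c_0$ on $B_{2r}$ (from the weak Harnack inequality, as you did) and iterating to build up the exponent; this is essentially the route followed in [AMPP]. As it stands, however, your truncation step is the missing link and your proposal is incomplete.
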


See \cite{AMPP} for a detailed proof.

\section{Non existence result} \label{3}
We now consider the problem stated in the introduction
\begin{equation}\label{main0}
\left\{
\begin{array}{rcll}
(-\Delta)^s u &= & \l\dfrac{u}{|x|^{2s}}+ |\nabla u|^{p}+ \mu f &
\text{ in }\Omega, \\ u&>&0 &\hbox{
in }\Omega,\\ u &=& 0 &\hbox{  in } \mathbb{R}^N\setminus\Omega,
\end{array}%
\right.
\end{equation}
where $\Omega \subset \ren$ is a bounded regular domain containing the origin,  $0<\lambda<\Lambda_{N,s}$,  $\mu>0, s\in (\frac 12,1)$, $p>1$ and  $f$ is a non-negative function.

To establish the upper bound for $p$ we follow closely the arguments of \cite{APS}, see also \cite{brezdup} for the potential case. We look for a radial solution to the problem \eqref{main0}.

\begin{equation}\label{soulrn}
(-\Delta)^s w-\lambda \frac{w}{|x|^{2s}}=|\nabla w|^p \mbox{  in }\ren.
\end{equation}
In particular, if we choose $w=A|x|^{\beta-\frac{N-2s}{2}}$, with $A$ a positive constant, $0<\beta<\dfrac{N-2s}{2}$, then \eqref{soulrn} is equivalent to have
$$A\gamma_\beta|x|^{-2s-\frac{N-2s}{2}+\beta}-\lambda A|x|^{\beta-2s-\frac{N-2s}{2}}=\dfrac{A\left|\beta-\dfrac{N-2s}{2}\right| ^{p}}{|x|^{(\frac{N-2s}{2}- \beta +1)p}},$$
where
\begin{equation}\label{gamma}
\gamma_\beta:=\gamma_{-\beta}:=\frac{2^{2s}\Gamma(\frac{N+2s+2\beta}{4})\Gamma(\frac{N+2s-2\beta}{4})}{\Gamma(\frac{N-2s-2\beta}{4})\Gamma(\frac{N-2s+2\beta}{4})}.
\end{equation}
Hence, in order to have homogeneity we need $$
p=\dfrac{\frac{N-2s}{2}-\beta+2s}{\frac{N-2s}{2}-\beta+1},
$$
which means that $\beta=\frac{N-2s}{2}+ \frac{p}{p-1}-\frac{2s}{p-1}$
and, then, the constants must satisfy the equation
$\gamma_\beta-\lambda=A^{p-1} \left|\beta-\dfrac{N-2s}{2} \right| ^{p}.$

Since $A>0$, we need $\gamma_\beta-\lambda>0$.
Consider the map
$$\begin{array}{rcl}\Upsilon:(-\frac{N-2s}{2}, \frac{N-2s}{2})&\mapsto& (0,\Lambda_{N,s})\\
\beta&\mapsto& \gamma_{\beta}
\end{array}
$$
then $\Upsilon$ is even and the restriction of $\Upsilon$ to the set $[0, \frac{N-2s}{2})$ is decreasing, see \cite{DaDuMon} and \cite{FLS},  so there exists a unique $\alpha_\lambda\in (0, \Lambda_{N,s}]$ such that  $\gamma_{\alpha_{\lambda}}=\gamma_{-\alpha_{\lambda}}=\lambda$.

Let $\beta_0=-\beta_1=\a_\l$, therefore, setting
$$
p_+(\lambda, s):=\dfrac{\frac{N-2s}{2}-\beta_0+2s}{\frac{N-2s}{2}-\beta_0+1}=
\frac{N+2s-2\alpha_{\lambda}}{N-2s-2\alpha_{\lambda}+2},
$$
and
$$
p_-(\lambda, s):=\dfrac{\frac{N-2s}{2}-\beta_1+2s}{\frac{N-2s}{2}-\beta_1+1}
=\frac{N+2s+2\alpha_\l}{N-2s+2\alpha_\l+2},
$$
it holds that
 $p_-(\lambda, s)<p_+(\lambda, s)$ and $\gamma_\beta-\lambda>0$ if and only if
$$
p_-(\lambda, s)<p<p_+(\lambda, s).
$$
It is easy to check that $p_{+}(\lambda,s)$ and $p_-(\lambda,s)$ are respectively an increasing and a decreasing function in $\alpha_{\lambda}$ and, therefore, are respectively a decreasing and an increasing function in $\lambda$. Thus
$$
\dfrac{N}{N-2s+1}< p_{-}(\lambda,s)< \dfrac {N+2s}{N-2s+2}< p_{+}(\lambda,s)<2s, \mbox{ for } 0<\lambda<\Lambda_{N,s}.
$$
\vskip 0.5cm
\begin{center}
\scalebox{0.8} 
{
\begin{pspicture}(0,-3.22)(14.70664,3.22)
\psline[linewidth=0.04cm](2.3994727,3.2)(2.3994727,-3.2)
\psline[linewidth=0.04cm](1.5994726,-2.4)(10.399472,-2.4)
\psline[linewidth=0.04cm](2.3994727,1.6)(2.7994726,1.6)
\psline[linewidth=0.04cm](3.1994727,1.6)(3.5994728,1.6)
\psline[linewidth=0.04cm](3.9994726,1.6)(4.3994727,1.6)
\psline[linewidth=0.04cm](4.799473,1.6)(5.1994724,1.6)
\psline[linewidth=0.04cm](5.5994725,1.6)(5.9994726,1.6)
\psline[linewidth=0.04cm](6.3994727,1.6)(6.799473,1.6)
\psline[linewidth=0.04cm](7.1994724,1.6)(7.5994725,1.6)
\psline[linewidth=0.04cm](7.9994726,1.6)(8.399472,1.6)
\psline[linewidth=0.04cm](8.799473,1.6)(9.199472,1.6)
\psline[linewidth=0.04cm](9.599473,1.6)(9.999473,1.6)
\psline[linewidth=0.04cm](2.3994727,-1.2)(2.7994726,-1.2)
\psline[linewidth=0.04cm](3.1994727,-1.2)(3.5994728,-1.2)
\psline[linewidth=0.04cm](3.9994726,-1.2)(4.3994727,-1.2)
\psline[linewidth=0.04cm](4.799473,-1.2)(5.1994724,-1.2)
\psline[linewidth=0.04cm](5.5994725,-1.2)(5.9994726,-1.2)
\psline[linewidth=0.04cm](6.3994727,-1.2)(6.799473,-1.2)
\psline[linewidth=0.04cm](7.1994724,-1.2)(7.5994725,-1.2)
\psline[linewidth=0.04cm](7.9994726,-1.2)(8.399472,-1.2)
\psline[linewidth=0.04cm](8.799473,-1.2)(9.199472,-1.2)
\psline[linewidth=0.04cm](9.599473,-1.2)(9.999473,-1.2)
\psline[linewidth=0.04cm](2.3994727,0.0)(2.7994726,0.0)
\psline[linewidth=0.04cm](3.1994727,0.0)(3.5994728,0.0)
\psline[linewidth=0.04cm](3.9994726,0.0)(4.3994727,0.0)
\psline[linewidth=0.04cm](4.799473,0.0)(5.1994724,0.0)
\psline[linewidth=0.04cm](5.5994725,0.0)(5.9994726,0.0)
\psline[linewidth=0.04cm](6.3994727,0.0)(6.799473,0.0)
\psline[linewidth=0.04cm](7.1994724,0.0)(7.5994725,0.0)
\psline[linewidth=0.04cm](7.9994726,0.0)(8.399472,0.0)
\psline[linewidth=0.04cm](8.799473,0.0)(9.199472,0.0)
\psline[linewidth=0.04cm](9.599473,0.0)(9.999473,0.0)
\usefont{T1}{ptm}{m}{n}
\rput(2.149326,1.705){$2s$}
\usefont{T1}{ptm}{m}{n}
\rput(1.7,0.105){$\frac{N+2s}{N-2s+2}$}
\usefont{T1}{ptm}{m}{n}
\rput(1.7,-1.095){$\frac{N}{N-2s+1}$}
\pscustom[linewidth=0.04]
{
\newpath
\moveto(2.3994727,1.6)
\lineto(3.3994727,1.6)
\curveto(3.8994727,1.6)(4.799473,1.5)(5.1994724,1.4)
\curveto(5.5994725,1.3)(6.3994727,1.1)(6.799473,1.0)
\curveto(7.1994724,0.9)(7.9994726,0.7)(8.399472,0.6)
\curveto(8.799473,0.5)(9.399472,0.3)(9.999473,0.0)
}
\pscustom[linewidth=0.04]
{
\newpath
\moveto(2.3994727,-1.2)
\lineto(3.3994727,-1.2)
\curveto(3.8994727,-1.2)(4.8994727,-1.1)(5.3994727,-1.0)
\curveto(5.8994727,-0.9)(6.799473,-0.7)(7.1994724,-0.6)
\curveto(7.5994725,-0.5)(8.399472,-0.3)(8.799473,-0.2)
\curveto(9.199472,-0.1)(9.699472,0.0)(9.999473,0.0)
}
\psline[linewidth=0.04cm](9.999473,2.8)(9.999473,2.4)
\psline[linewidth=0.04cm](9.999473,2.0)(9.999473,1.6)
\psline[linewidth=0.04cm](9.999473,1.2)(9.999473,0.8)
\psline[linewidth=0.04cm](9.999473,0.4)(9.999473,0.0)
\psline[linewidth=0.04cm](9.999473,-1.2)(9.999473,-1.6)
\psline[linewidth=0.04cm](9.999473,-2.0)(9.999473,-2.4)
\psline[linewidth=0.04cm](9.999473,-1.2)(9.999473,-0.8)
\psline[linewidth=0.04cm](9.999473,0.0)(9.999473,-0.4)
\usefont{T1}{ptm}{m}{n}
\rput(11.132793,-2.695){$\lambda=\Lambda_{N,s}\iff\alpha_{\lambda}=0$}
\usefont{T1}{ptm}{m}{n}
\rput(3.5738087,-2.695){$\lambda=0\iff\alpha_{\lambda}=\frac{N-2s}{2}$}
\usefont{T1}{ptm}{m}{n}
\rput(9,1){$p_{+}(\lambda,s)$}
\usefont{T1}{ptm}{m}{n}
\rput(9,-0.695){$p_{-}(\lambda,s)$}
\end{pspicture}}
\end{center}
\vskip 0.5cm
Recalling that
$\mu(\l)= \dfrac{N-2s}{2}-\alpha_{\l}, \bar{\mu}(\lambda)= \dfrac{N-2s}{2}+\alpha_{\lambda}$, then

$$p_+(\lambda, s)=\dfrac{\mu(\l)+2s}{\mu(\l)+1} \hbox{ and  }
p_-(\lambda, s)=\dfrac{\bar{\mu}(\l)+2s}{\bar{\mu}(\l)+1}.$$
Therefore, if $p_-(\lambda, s)<p<p_+(\lambda, s)$ we will be able to construct a radial supersolution for the Dirichlet  problem \eqref{main0} under suitable condition on $f$, just modifying the $w$ found above. Hence this bound for $p$ will be the threshold for the existence also for the Dirichlet problem.
\begin{remark}
Notice that for $s=1$,
$$\lambda_{\alpha_\lambda}:=\frac{2^{2}\Gamma(\frac{N+2+2\alpha_{\lambda}}{4})\Gamma(\frac{N+2-2\alpha_{\lambda}}{4})}{\Gamma(\frac{N-2-2\alpha_{\lambda}}{4})\Gamma(\frac{N-2+2\alpha_{\lambda}}{4})}= 4\Big(\dfrac{N-2+2\alpha_{\lambda}}{4}\Big)\Big(\dfrac{N-2-2\alpha_{\lambda}}{4}\Big) = 4 \Big(\dfrac{N-2}{4}\Big)^2-\alpha_{\lambda}^2.$$
Hence $\alpha_{\lambda}= \pm \sqrt {\Big(\dfrac{N-2}{2}\Big)^2-\lambda }$ and $p< \dfrac{\frac{N+2}{2}-\alpha_{\lambda}}{ \frac{N}{2}-\alpha_{\lambda}}= \dfrac{2+\frac{N-2}{2}-\alpha_{\lambda}}{1+\frac{N-2}{2}-\alpha_{\lambda}}=\dfrac{2+\alpha_1}{1+\alpha_{1}}=p_{+}(\lambda).$
This coincides with the nonexistence exponent defined in \cite{APS}.
\end{remark}
The first part of the main non existence result in Theorem \ref{mainint}, related to the size of the exponent of the nonlinear term is the following.
\begin{Theorem}\label{non1}
Assume that  $s\in (\frac 12, 1)$ and $p>p_+(\l,s)$. For $\l>0$, problem \eqref{main0} has no positive solution $u$ in the sense of Definition \ref{def2}.
\end{Theorem}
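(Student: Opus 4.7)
The plan is to assume, for contradiction, that a positive solution $u$ to \eqref{main0} exists and then bootstrap its singularity at the origin until the integrability requirements of Definition \ref{def2} are violated. The initial singularity comes from the Hardy term alone: since $|\nabla u|^p+\mu f\ge 0$, the equation yields $(-\Delta)^s u \ge \lambda u/|x|^{2s}$, so that Lemma \ref{estim0} delivers
\[
u(x)\ge c_0\,|x|^{-\mu(\lambda)} \qquad \text{in some ball } B_{r_0}(0)\subset\Omega,
\]
with $\mu(\lambda)=\frac{N-2s}{2}-\alpha_\lambda$.

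Next, I would convert the pointwise lower bound for $u$ into a lower bound on the gradient term. Using the Riesz-type representation of $u$ on $\Omega$ together with gradient estimates for the fractional Green's kernel (these are available precisely because $s>\tfrac12$), a bound $u(x)\ge c\,|x|^{-\mu_k}$ on a ball $B_{r_k}(0)$ transfers, after a mild shrinking of the ball, to $|\nabla u(x)|^p \ge c'\,|x|^{-p(\mu_k+1)}$. Feeding this estimate back into \eqref{main0} yields
\[
(-\Delta)^s u \,\ge\, \lambda\,\frac{u}{|x|^{2s}} + c'\,|x|^{-p(\mu_k+1)},
\]
and an inhomogeneous version of the comparison principle underlying Lemma \ref{estim0} (applied, for instance, to $u$ minus a suitable multiple of the radial profile $|x|^{-p(\mu_k+1)+2s}$) improves the bound to $u(x)\ge c_{k+1}\,|x|^{-\mu_{k+1}}$, with $\mu_{k+1}=p(\mu_k+1)-2s$.

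The final observation is that the affine recursion $\mu\mapsto p(\mu+1)-2s$ has unique fixed point $\mu^{*}=(2s-p)/(p-1)$ and slope $p>1$. An easy manipulation shows that the assumption $p>p_+(\lambda,s)=(\mu(\lambda)+2s)/(\mu(\lambda)+1)$ is exactly equivalent to $\mu(\lambda)>\mu^{*}$, so the iterates starting from $\mu_0=\mu(\lambda)$ strictly increase and $\mu_k\to+\infty$. This contradicts the requirement $u/|x|^{2s}\in L^1(\Omega)$, which forces $\mu_k<N-2s$, as well as $|\nabla u|^p\in L^1_{\mathrm{loc}}(\Omega)$ inherited from $u\in W^{1,p}_0(\Omega)$, which forces $(\mu_k+1)p<N$.

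The hard part is the gradient-transfer step: producing a genuine lower bound on $|\nabla u|$ from a pointwise lower bound on $u$, since such transfer is false in general. One must work through the Riesz representation and carefully estimate the gradient of the fractional Green's function against a source with a controlled singularity at the origin, which is where the condition $s>\tfrac12$ enters. Care must also be taken in the bookkeeping of the radii $r_k$ so that the iteration can be repeated indefinitely without losing a definite ball around the origin, and in the tracking of constants $c_k$ to ensure they remain strictly positive at every stage.
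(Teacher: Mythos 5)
Your bootstrap strategy is genuinely different from the paper's proof, but it hinges on a step that does not actually go through, and you correctly flag that step as the danger point. The gradient-transfer claim --- that a pointwise lower bound $u(x)\ge c\,|x|^{-\mu_k}$ on a ball forces a pointwise lower bound $|\nabla u(x)|\ge c'\,|x|^{-(\mu_k+1)}$ on a smaller ball --- is not just technically hard, it is structurally false, and the representation-formula route does not rescue it. Writing $u=\mathcal G_s * g$ with $g=(-\Delta)^s u \ge 0$, one has $\nabla u = \nabla_x \mathcal G_s * g$, but $\nabla_x\mathcal G_s$ is not a sign-definite kernel, so positivity of $g$ yields no pointwise lower bound on $|\nabla u|$. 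Indeed the fact that $u$ is large near the origin is already fully explained by the Hardy term $\lambda u/|x|^{2s}$ on the right-hand side; nothing in the equation forces the gradient term to carry any portion of the singular mass, so a priori $|\nabla u|$ could be small on a large set near $0$. Without this pointwise lower bound the recursion $\mu_{k+1}=p(\mu_k+1)-2s$ never gets off the ground, and the nice algebra (that $p>p_+(\lambda,s)$ is equivalent to $\mu(\lambda)>\mu^*=(2s-p)/(p-1)$, making the iterates diverge) remains a heuristic. The further appeal to ``an inhomogeneous version of the comparison principle underlying Lemma \ref{estim0}'' is also left unjustified: that lemma produces the single exponent $\mu(\lambda)$ tied to the Hardy constant, and improving it to $\mu_{k+1}$ requires a separate Riesz-potential-type estimate together with the very lower bound on the source that you have not established.

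The paper circumvents exactly this obstruction by never attempting a lower bound on $|\nabla u|$. For $p\ge 2s$ it uses upper bounds: the assumed integrability $u\in W^{1,p}_0$ and $u/|x|^{2s}\in L^1$ are pushed, via Sobolev embedding or by successively testing against powers of $u$ (and, in the range $2s<p<2$, against $u^{pm-1}$ combined with the algebraic inequality $(a-b)(a^{pm-1}-b^{pm-1})\le C|a^m-b^m|^p$), to conclude $\int u^{m+1}/|x|^{2s}<\infty$ for all $m$, which the known lower bound $u\ge c|x|^{-\mu(\lambda)}$ eventually contradicts. For the genuinely hard range $p_+(\lambda,s)<p\le 2s$, where no integrability bootstrap suffices, the paper switches to a duality argument: it tests \eqref{main0} against the solution $\phi_\theta$ of $(-\Delta)^s\phi_\theta=\theta$, absorbs $\int|\nabla u|^p\phi_\theta$ by Young's inequality against the weighted $p$-Laplacian solution $\psi_\theta$ from Lemma \ref{aux}, and, for a cleverly chosen singular $\theta$, shows that the surviving Hardy term $\int_{B_r(0)}\phi_\theta/|x|^{2s+\mu(\lambda)}$ must be finite although a direct computation shows it diverges. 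The cost is the auxiliary weighted problem \eqref{eqpsi} and the careful exponent bookkeeping in \eqref{eqpsim}, but the gain is that no pointwise information on $|\nabla u|$ is ever required. To salvage your approach you would need, at minimum, to replace the pointwise gradient lower bound with a workable integral version and show that it closes the recursion, which is a substantially different (and at present incomplete) argument.
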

\begin{proof}
We argue by contradiction. Assume that $u$ is a positive solution to \eqref{main0} in the sense of Definition \ref{def2}, then $u\in W^{1,p}_0(\O)$ and $\dfrac{u}{|x|^{2s}}\in L^1(\O)$. By Lemma \ref{estim0}, it follows that
$$
u(x)\ge C|x|^{-\mu(\l)}\mbox{  in   }B_r(0)\subset\subset \O.
$$
The proof will be given in several steps according to the value of $p$.

\

{\bf First case: $p\ge \dfrac{N}{\mu(\l)+1}>p_+(\l,s)$}.

Since $\dfrac{u}{|x|^{2s}}, |\n u|^p\in L^1(B_r(0))$, then  by the
Poincaré-Wirtinger inequality we conclude that $u\in W^{1,p}(L^1(B_r(0))$.
Thus $u\in L^{p^*}(B_r(0))$, $p^*=\dfrac{Np}{N-p}$.
Therefore, due to the behavior of $u$ near the origin, we conclude that
$|x|^{-\mu(\l)}\in L^{p^*}(B_r(0))$. Thus,
$p^* \mu(\l)< N$, namely, $p< \frac{N}{\mu(\l)+1}$, which is a contradiction with the condition on $p$.

\

{\bf Second case: $2\le p<\dfrac{N}{\mu(\l)+1}$}.

Since $u\in W^{1,p}_0(\O)\subset W^{1,2}_0(\O)$, then $u\in H^s_0(\O)$.
It is well known that if $u$ is a solution in the sense of Definition \ref{def2},
then $u$ is an entropy solution \eqref{main0}. Hence we can use $T_k(u)$, the truncation function of $u$, as a test function in \eqref{main0} to conclude that
$$
\begin{array}{lll}
\dyle \io|\n u|^pT_k(u) dx+\l\io \frac{uT_k(u)}{|x|^{2s}}dx &\le & \dyle \iint_{\mathbb{R}^{N}\times \mathbb{R}^{N}}\frac{(u(x)-u(y))(T_k(u(x))-T_k(u(y)))}{|x-y|^{N+2s}}\, dy\, dx\\
&\le & ||u||^2_{H^s_0(\O)}\le C(\O)||u||^2_{W^{1,2}_0(\O)}.
\end{array}
$$
Letting $k\to \infty$ and using Fatou's lemma, we reach that
$$
\io u|\n u|^p dx+\l\io \frac{u^2}{|x|^{2s}}dx\le ||u||^2_{H^s_0(\O)}\le C(\O)||u||^2_{W^{1,2}_0(\O)}.
$$
By using the H\"{o}lder and Young inequalities we find that
$$
\io|\n u|^2u dx\le C_1 \io|\n u|^pu dx+C_2\io u<\infty,
$$
and then we conclude that $u^{\frac{3}{2}}\in W^{1,2}_0(\O)$ and then $u^{\frac{3}{2}}\in H^s_0(\O)$. As above, using $u^2$ as a test function in \eqref{main0} and using the fact that
$$
\bigg(u^2(x)-u^2(y)\bigg)\bigg(u(x)-u(y)\bigg)\le C(u^{\frac{3}{2}}(x)-u^{\frac{3}{2}}(y))^2,
$$
it follows that
$$
\io|\n u|^pu^2dx+\l\io \frac{u^3}{|x|^{2s}}dx\le C ||u^{\frac{3}{2}}||^2_{H^s_0(\O)}<\infty.
$$
Iterating the above process, it holds that
$$
\io|\n u|^pu^mdx+\l\io \frac{u^{m+1}}{|x|^{2s}}dx<\infty,\mbox{  for all  }m.
$$
Choosing $(m+1)\mu(\l)+2s\ge N$, we reach a contradiction and then the non existence result follows in this case too.

\

{\bf Third case: $2s<p<2$}.
We follow  the same idea as in the second case.
Since $u\in W^{1,p}_0(\O)$, then $u\in W^{\s,p}_0(\O)$ for all $\s<1$.
Setting $\s=\frac{2s}{p}$, then $u\in  W^{\frac{2s}{p},p}_0(\O)$.

We claim that \textit{if $u$ is a solution to \eqref{main0}, then
$\dyle \io|\n u|^pu^{a} dx<\infty$, for all $a>0$.}

To prove the claim we begin by noticing that, since $(p-1)<1$, then for all $m>0$, we have the next algebraic inequality,
\begin{equation}\label{alg}
(a-b)(a^{pm-1}-b^{pm-1})\le C(p,m)|a^m-b^m|^{p}, \mbox{ for all  }a,b\ge 0.
\end{equation}
Thus using an approximation argument and by taking $u^{p-1}$ as a test function in \eqref{main0}, using the algebraic inequality \eqref{alg} with $m=1$, it holds that
\begin{eqnarray*}
\dyle \io|\n u|^pu^{p-1}dx+\l\io \frac{u^p}{|x|^{2s}}dx &\le & \iint_{D_\O}\frac{(u(x)-u(y))\bigg(u^{p-1}(x)-u^{p-1}(y)\bigg)}{|x-y|^{N+\frac{2s}{p}p}}dxdy\\
&\le &\dyle C(p)\iint_{D_\O}\frac{|u(x)-u(y)|^p}{|x-y|^{N+\frac{2s}{p}p}}dxdy=C(p)||u||^p_{W^{\frac{2s}{p},p}_0(\O)}<\infty.
\end{eqnarray*}
Thus $\dyle \io|\n u|^pu^{p-1}dx<\infty$, and then $u^{\frac{2p-1}{p}}\in W^{1,p}_0(\O)$ and, as a consequence, $u^{\frac{2p-1}{p}}\in W^{\frac{2s}{p},p}_0(\O)$.
Observe here that $\frac{2p-1}{p}>1$. Now we set $m_1=\frac{2p-1}{p}$;
then choosing $u^{pm_1-1}$ as a test function in \eqref{main0}  (again using an approximation argument), it follows that
\begin{eqnarray*}
\dyle \io|\n u|^pu^{pm_1-1} dx+\l\io \frac{u^{pm_1+1}}{|x|^{2s}}dx &\le & \iint_{D_\O}\frac{(u(x)-u(y))\bigg(u^{pm_1-1}(x)-u^{pm_1-1}(y)\bigg)}{|x-y|^{N+\frac{2s}{p}p}}dxdy\\
\le \!\!\!\dyle\quad C(p)\iint_{D_\O}\frac{|u^{m_1}(x)-u^{m_1}(y)|^p}{|x-y|^{N+\frac{2s}{p}p}}dxdy&=&C(p)||u^{m_1}||^p_{W^{\frac{2s}{p},p}_0(\O)}\!\!<\infty.
\end{eqnarray*}
Thus $\dyle \io|\n u|^pu^{pm_1-1} dx<\infty$ and then
$u^{\frac{3p-2}{p}}\in W^{1,p}_0(\O)$. As a consequence,
$u^{\frac{3p-2}{p}}\in W^{\frac{2s}{p},p}_0(\O)$.
Setting $m_{j+1}=m_j+(1-\frac 1p)$ and iterating the above process,
it holds that $$\dyle \io|\n u|^pu^{pm_j-1} dx<\infty,$$
for all $j$. Since $m_j\to \infty$ as $j\to \infty$, then we conclude that $\dyle \io|\n u|^pu^{a}dx<\infty$ for all $a>0$ and the claim follows.

Therefore, we obtain that $u^{\frac{a}{p}+1}\in W^{1,p}_0(\O)$, for all $a>0$.
Thus using the local Hardy inequality in the space $W^{1,p}_0(\O)$ we reach that
$$
C(N,p)\io \frac{u^{p(\frac{a}{p}+1)}}{|x|^p}dx\le ||u^{\frac{a}{p}+1}||^p_{W^{1,p}_0(\O)}<\infty.
$$
Choosing $a\ge \frac{N-p(\mu(\l)+1)}{\mu(\l)}$,  we reach a contradiction.

Thus the non existence result follows again in this case.

\

{\bf Fourth case: $p_+(\l,s)<p\le 2s$}.

We deal now with the range $p_+(\l,s)<p\le 2s$, which is more involved.

Recall that $p_+(\l,s)=\frac{\mu(\l)+2s}{\mu(\l)+1}$.
Since $\l>0$, then $p<2s+\mu(\l)$. We closely follow an argument used in \cite{AA}.

Let us consider the set of functions $\mathrm{T}(\O)$ defined by
\begin{equation}\label{set}
\mathrm{T}(\O):=\{\theta \in \mathcal{C}_0(\Omega)\mbox{  with  }\theta \gneqq 0 \mbox{  and  }\text{Supp}\, \theta\subset B_r(0)\subset\subset \O\}.
\end{equation}
Let $\theta\in \mathrm{T}(\O)$ be fixed and define $\phi_{\theta} \in H_0^s(\Omega) \cap L^\infty(\O)$, the unique solution of the problem
\begin{equation} \label{eqphi0}
\left\{
\begin{aligned}
(-\D)^s \phi_{\theta} & = \theta\,, & \quad \textup{ in } \Omega\,, \\
\phi_{\theta} & = 0\,, & \textup{ in } \ren \setminus {\Omega}.
\end{aligned}
\right.
\end{equation}
Then, according with \cite{ROS}, $\phi_{\theta}\simeq \delta^s$, where $\delta(x)$ denotes the distance to the boundary.

Using $\phi_{\theta}$ as test function in \eqref{main0}, it holds that
\begin{equation}\label{eq1}
\lambda \int_{\Omega} \frac{u \phi_{\theta}\,}{|x|^{2s}}dx+ \int_{\Omega} |\nabla u |^p \phi_{\theta}\, dx< \int_{\Omega} u(-\D)^s\, \phi_{\theta}\, dx = \int_{\Omega} u\, \theta \, dx.
\end{equation}
Now, consider $\psi_{\theta} \in W_{loc}^{1,p}(\Omega)$ to be the unique solution to the problem
\begin{equation} \label{eqpsi}
\left\{
\begin{aligned}
-\text{div}( \phi^a_{\theta} |\nabla \psi_{\theta}|^{p-2} \nabla \psi_{\theta}) & = \theta\,, & \quad \textup{ in } \Omega\,,\\
\psi_{\theta} & = 0\,, & \textup{ on } \partial \Omega\,,
\end{aligned}
\right.
\end{equation}
where $a<\frac{p-1}{s}$. Existence of $\psi_\theta$ will be proved in Lemma \ref{aux} below.

\

Going back to \eqref{eq1} and using Young's inequality, we have that

\begin{eqnarray*}
\dyle \lambda \int_{\Omega} \frac{u \phi_{\theta}\,}{|x|^{2s}}dx+ \int_{\Omega} |\nabla u |^p \phi_{\theta}\, dx &\le & \int_{\Omega} u\, \theta \, dx=  \int_{\Omega} u \left( -\text{div}( \phi^a_{\theta} |\nabla \psi_{\theta}|^{p-2} \nabla \psi_{\theta}) \right) dx \\
& \le & \dyle\int_{\Omega} \phi^a_{\theta} |\nabla u | |\nabla \psi_{\theta}|^{p-1} dx \\
&\le & \dyle \frac 12\int_{\Omega} \phi_{\theta} |\n u|^p dx + C_2 \int_{\Omega} \phi^{(a-1)p'+1}_{\theta} |\nabla \psi_{\theta}|^p dx.
 \end{eqnarray*}
Thus, we get
$$
\lambda \int_{\Omega} \frac{u \phi_{\theta}\,}{|x|^{2s}}dx \leq C_2\int_{\Omega} \phi^{(a-1)p'+1}_{\theta} |\nabla \psi_{\theta}|^p dx,$$
with $C_2$  depending only on $p$. Due to the behavior of $u$ near the origin, we get
\begin{equation}\label{eq00}
\lambda \int_{B_r(0)} \frac{\phi_{\theta}\,}{|x|^{2s+\mu(\l)}}dx \leq C_2\int_{\Omega} \phi^{(a-1)p'+1}_{\theta} |\nabla \psi_{\theta}|^p dx.
\end{equation}
Setting
$$
\mathrm{Q}(\theta):=\dfrac{\dyle \int_{\Omega} \phi^{(a-1)p'+1}_{\theta} |\nabla \psi_{\theta}|^p}{\dyle \int_{B_r(0)}\frac{\phi_{\theta}\,}{|x|^{2s+\mu(\l)}}dx},
$$
then $\mathrm{Q}(\theta)=\mathrm{Q}(\mu\theta)$ for all $\mu>0$. Thus
$$
\l^*=\inf_{\{\theta \in \mathrm{T}(\O)\}}\dfrac{\dyle \int_{\Omega} \phi^{(a-1)p'+1}_{\theta} |\nabla \psi_{\theta}|^p}{\dyle \int_{B_r(0)}\frac{\phi_{\theta}\,}{|x|^{2s+\mu(\l)}}dx} \ge C_3\,\l>0,$$
where $C_3$ depends only on $p$. Notice that, going back to inequality \eqref{eq00} and since $\l>0$ is fixed, then if $\dyle
\int_{\Omega} \phi^{(a-1)p'+1}_{\theta} |\nabla \psi_{\theta}|^p<\infty$, it holds that $\dyle\int_{B_r(0)} \frac{\phi_{\theta}\,}{|x|^{2s+\mu(\l)}}dx<\infty$ which will be the key in order to get the desired contradiction.

Notice that, using a suitable approximation and density argument, inequality \eqref{eq00} holds for all $\theta\in L^1(\O)$ with $\text{Supp}\,(\theta)\subset\subset \O$ if, in addition, we can show that $\dyle
\int_{\Omega} \phi^{(a-1)p'+1}_{\theta} |\nabla \psi_{\theta}|^pdx<\infty$. This will be the main idea in order to get the desired results.

Without loss of generality we can assume that $B_1(0)\subset \subset \O$. Consider
$$
\theta(x)=(\frac{1}{|x|^m}-1)_+ \mbox{  with } \max\{2s, N-\mu(\l)\}<m<N
$$
and define $\phi_\theta$, the unique solution to problem \eqref{eqphi0} (that can be considered in a very weak sense or entropy sense). Then
$$
\phi_\theta\backsimeq \frac{\delta^s(x)}{|x|^{m-2s}}.
$$
Since $m-2s<N-p$ and $a<\dfrac{p-1}{s}<1$,
then the weight $\dfrac{1}{|x|^{a(m-2s)}}$ is admissible in the sense of Caffarelli-Kohn-Nirenberg inequalities.

We claim that the auxiliary problem \eqref{eqpsi} has a solution $\psi_\theta$ such that, under the above condition on $m$ and $p$, we have $\dyle
\int_{\Omega} \phi^{(a-1)p'+1}_{\theta} |\nabla \psi_{\theta}|^p<\infty$. Since $\text{Supp}\,\theta\subset\subset \O$ and  $\theta, \phi_\theta$ are only singular at the origin, then we have to show that $\dyle
\int_{B_r(0)} \phi^{(a-1)p'+1}_{\theta} |\nabla \psi_{\theta}|^p<\infty$.

\

Define $\widetilde{\psi_\theta}$ to be the unique solution to the problem
\begin{equation} \label{eqpsim}
\left\{
\begin{aligned}
-\text{div}( \dfrac{1}{|x|^{a(m-2s)}}|\nabla \widetilde{\psi_\theta}|^{p-2} \nabla \widetilde{\psi_\theta}) & = \dfrac{1}{|x|^{m}}\,, & \quad \textup{ in } \Omega\,,\\
\widetilde{\psi_\theta} & = 0\,, & \textup{ on } \partial \Omega\,.
\end{aligned}
\right.
\end{equation}
By a direct computation we can show that, as $x\to 0$, $\widetilde{\psi_\theta}\backsimeq \dfrac{C(\O)}{|x|^{\frac{(1-a)(m-2s)}{p-1}+\frac{2s-p}{p-1}}}$ and
$$
|\n \widetilde{\psi_\theta}|\backsimeq \dfrac{C(\O)}{|x|^{\frac{(1-a)(m-2s)}{p-1}+\frac{2s-1}{p-1}}}.
$$
Then
$$
(\frac{1}{|x|^{m-2s}})^{(a-1)p'+1}|\n \widetilde{\psi_\theta}|^p \approx \dfrac{C(\O)}{|x|^{(m-2s)+p'(2s-1)}},
$$
in a neighborhood of the origin.
Since $m>N-\mu(\l)$, then $(m-2s)+p'(2s-1)<N$, if and only if $p>p_+(\l,s)$.

Thus $$\io (\frac{1}{|x|^{m-2s}})^{(a-1)p'+1}|\n \widetilde{\psi_\theta}|^p <\infty \hbox{ and  then  }
\int_{B_r(0)} \phi^{(a-1)p'+1}_{\theta} |\nabla { \psi_{\theta}} |^p<\infty.$$
However, notice that, by a direct computation,
$$\dyle\int_{B_r(0)} \frac{\phi_{\theta}\,}{|x|^{2s+\mu(\l)}}dx=\infty,$$
 which is a contradiction.
\end{proof}

To finish with the proof of Theorem \ref{non1} we need to show the existence of a solution for the $p$-Laplacian weighted problem \eqref{eqpsi}, as was stated in the fourth case considered above. This is the content of the following lemma.

\begin{Lemma}\label{aux}
Let $\Omega \subset \ren$, $N \geq 2$, be a bounded domain with boundary $\partial \Omega$ of class $\mathcal{C}^2$. Assume that $\theta\in \mathrm{T}(\O)$ defined in \eqref{set} and let $\phi_{\theta} \in H_0^s(\Omega) \cap L^\infty(\O)$ be the solution of
\begin{equation}
\left\{
\begin{aligned}
(-\D)^s\phi_{\theta} & = \theta\,, & \quad \textup{ in } \Omega\,, \\
\phi_{\theta} & = 0\,, & \quad \textup{ in } \ren \setminus {\Omega}\,.
\end{aligned}
\right.
\end{equation}
Suppose that $1<p\le 2s$ and $a<\dfrac{p-1}{s}$. Then there exists $\psi_{\theta} \in W_{loc}^{1,p}(\Omega)$
distributional solution  of
\begin{equation} \label{psq}
\left\{
\begin{aligned}
-\text{div}( \phi^a_{\theta} |\nabla \psi_{\theta}|^{p-2} \nabla \psi_{\theta}) & = \theta\,, & \quad \textup{ in } \Omega\,,\\
\psi_{\theta} & = 0\,, & \textup{ on } \partial \Omega\,.
\end{aligned}
\right.
\end{equation}
Moreover $\phi_{\theta}\in {W^{1,p}_0(\d^{as}(x) dx, \O)}$ where ${ W^{1,p}_0(\d^{as}(x) dx, \O)}$ is the completion of $\mathcal{C}^\infty_0(\O)$ with respect to the norm
$$
||\phi||_1^p:= \int_{\Omega} \delta^{as}(x) |\nabla v|^p dx\,\mbox{  where    }\d(x):=\mbox{dist}(x,\p\O).
$$
\end{Lemma}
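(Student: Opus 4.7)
The plan is to obtain $\psi_\theta$ as the unique minimizer, on a suitable weighted Sobolev space, of the strictly convex functional
$$J(v) = \frac{1}{p}\int_{\Omega} \phi_\theta^a\,|\nabla v|^p\,dx - \int_{\Omega} \theta\, v\,dx,$$
and then to read off \eqref{psq} as its Euler--Lagrange equation. The natural space is $X := W^{1,p}_0(\delta^{as}dx,\Omega)$, the completion of $\mathcal{C}^\infty_0(\Omega)$ with respect to $\|v\|_X^p := \int_\Omega \delta^{as}|\nabla v|^p\,dx$. The first preliminary step is to use the Ros-Oton--Serra boundary estimates cited in the paper to conclude that $c_1\,\delta^s(x) \le \phi_\theta(x)\le c_2\,\delta^s(x)$ in $\Omega$ (recall that $\theta\in\mathrm{T}(\Omega)$ is bounded and supported strictly inside $\Omega$, so $\phi_\theta\in L^\infty\cap \mathcal{C}^\beta$ and has the standard boundary decay). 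In particular $\phi_\theta^a \asymp \delta^{as}$ throughout $\Omega$, so the energy in $J$ is equivalent to $\|\cdot\|_X^p$, which justifies choosing $X$ as the functional setting.

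The second step is coercivity, and this is the key point where the hypothesis $a<\frac{p-1}{s}$ enters: it gives precisely $as<p-1$, which is the classical Kufner/Neças admissibility range in which a weighted Hardy--Poincaré inequality
$$\int_{\Omega}\delta^{as}|v|^p\,dx \;\le\; C\int_{\Omega}\delta^{as}|\nabla v|^p\,dx \qquad \forall\, v\in X$$
holds on a $\mathcal{C}^2$ domain. Since $\operatorname{Supp}\theta$ is a compact subset of $\Omega$ on which $\delta^{as}$ is bounded away from $0$ and $\infty$, the linear form $v\mapsto \int_\Omega \theta v\,dx$ is controlled by $\|v\|_X$ (via the above Poincaré inequality followed by H\"older), which yields coercivity of $J$. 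Weak lower semicontinuity comes for free because the energy $v\mapsto \int \phi_\theta^a |\nabla v|^p$ is convex and strongly continuous on $X$, hence the direct method of the calculus of variations produces a unique minimizer $\psi_\theta\in X$. Varying $J$ against $\varphi\in \mathcal{C}^\infty_0(\Omega)$ yields the weak formulation of \eqref{psq}.

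Finally, for the local regularity statement $\psi_\theta\in W^{1,p}_{\mathrm{loc}}(\Omega)$, I note that on any compact $K\subset\subset\Omega$ the weight $\delta^{as}$ is bounded from below by a positive constant, so $\int_K|\nabla \psi_\theta|^p\,dx \le C_K\,\|\psi_\theta\|_X^p<\infty$; and the membership $\psi_\theta\in W^{1,p}_0(\delta^{as}dx,\Omega)$ is built into the construction. I expect the main technical obstacle to be the weighted Poincaré inequality with the sharp Kufner exponent, especially when $a<0$ (the weight blows up at $\partial\Omega$), where one must carefully argue that $X$ retains a meaningful trace-zero structure; the alternative, if one prefers to avoid invoking Kufner's theory directly, is to approximate by replacing $\phi_\theta^a$ with $(\phi_\theta+\varepsilon)^a$, solve the non-degenerate problem by standard monotone operator theory, and pass to the limit $\varepsilon\to 0$ using the uniform bound coming from using $\psi_\varepsilon$ itself as a test function together with the boundary behavior $\phi_\theta\asymp\delta^s$.
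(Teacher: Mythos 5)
Your proposal is correct but takes a genuinely different route from the paper. You minimize $J(v)=\frac1p\int_\Omega\phi_\theta^a|\nabla v|^p\,dx-\int_\Omega\theta v\,dx$ directly on $X=W^{1,p}_0(\delta^{as}dx,\Omega)$, getting coercivity from the weighted Hardy--Poincar\'e inequality (the Ne\v{c}as estimate the paper records as Theorem \ref{w-hardy}, valid precisely because $as<p-1$), weak lower semicontinuity from convexity, and then read off \eqref{psq} as the Euler--Lagrange equation. The paper instead regularizes the degenerate weight, solving $-\mathrm{div}((\phi_\theta+\tfrac1n)^a|\nabla\psi_n|^{p-2}\nabla\psi_n)=\theta$ in $W^{1,p}_0(\Omega)\cap L^\infty(\Omega)$ by the classical variational theory, extracts the uniform bound $\int_\Omega\delta^{as}|\nabla\psi_n|^p\le C$ by testing with $\psi_n$ and invoking the same Ne\v{c}as inequality, passes to the weak limit in $X$, and finally upgrades to strong convergence in $X$ by testing with $\psi_n-\psi$. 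Both arguments use the identical two ingredients --- the Ros-Oton--Serra bound $\phi_\theta\asymp\delta^s$ and the Ne\v{c}as weighted Hardy inequality --- so the essential content is the same; your direct approach is a little cleaner since it skips the limit passage and gives uniqueness for free from strict convexity, whereas the paper's approximation scheme produces explicit non-degenerate approximants in $W^{1,p}_0\cap L^\infty$ and a strong-convergence statement which is convenient for the later non-existence argument. Your caveat about $a<0$ is easily dispatched: in that range $\delta^{as}$ is bounded below, so $X\hookrightarrow W^{1,p}_0(\Omega)$ and the ordinary Poincar\'e inequality already yields coercivity, no Ne\v{c}as inequality needed; and the approximation alternative you sketch in your last sentence is, in fact, exactly the paper's proof. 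One small remark: the paper's proof contains a typo ``$\sigma=as\in(p-1,p)$,'' which should read $\sigma=as\in(0,p-1)$ so as to match both the hypothesis of Theorem \ref{w-hardy} and the standing assumption $a<\frac{p-1}{s}$; you identified the correct range.
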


Before proving Lemma \ref{aux}, let us recall the next weighted Hardy inequality proved in \cite{Ne}, Theorem 1.6.
\begin{Theorem}\label{w-hardy}
Assume that $\Omega \subset \ren$, $N \geq 2$, is a bounded regular domain and let $0<\s<p-1$. Then, there exists a positive constant $C=C(\O,p,\s)$ such that for all $v \in \mathcal{C}^\infty_0(\O)$, we have
$$
\int_{\Omega} \delta^{\s-p}(x)|v|^p dx \leq C \int_{\Omega} \delta^\s(x) |\nabla v|^p dx\,.
$$
\end{Theorem}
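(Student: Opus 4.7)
The approach is a direct variational argument in the weighted Sobolev space $W := W^{1,p}_0(\d^{as}(x)\,dx, \O)$ equipped with the norm $\|v\|_1 = \left(\int_\O \d^{as}|\nabla v|^p\,dx\right)^{1/p}$. Since $\theta\in \mathcal{C}_0(\O)$ has compact support in $\O$, $\phi_\theta$ is bounded and, by the boundary regularity of \cite{ROS} already invoked, $\phi_\theta(x)\simeq \d^s(x)$ on $\O$. Consequently the weights $\phi_\theta^a$ and $\d^{as}$ are comparable up to constants depending on $\phi_\theta$, so the natural norm $\left(\int_\O \phi_\theta^a|\nabla v|^p\right)^{1/p}$ is equivalent to $\|\cdot\|_1$ on $W$, and $W$ is a reflexive uniformly convex Banach space.

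Consider the strictly convex functional
\[
J(v) = \frac{1}{p}\int_\O \phi_\theta^a |\nabla v|^p\, dx - \int_\O \theta\, v\, dx, \qquad v\in W.
\]
The hypothesis $a<\tfrac{p-1}{s}$ gives $0\le as<p-1$, so Theorem \ref{w-hardy} applied with $\s=as$ yields $\int_\O \d^{as-p}|v|^p\,dx \le C\|v\|_1^p$. Because $K:=\mathrm{Supp}(\theta)\subset\subset \O$, one has $\d\ge \d_K>0$ on $K$, whence
\[
\int_K|v|^p\,dx \le \d_K^{p-as}\int_\O \d^{as-p}|v|^p\,dx \le C\|v\|_1^p,
\]
and H\"older's inequality then bounds the linear term: $|\int_\O \theta v\,dx|\le C(K,\theta)\|v\|_1$. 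Combined with the $p$-homogeneity of the leading term, this gives coercivity $J(v)\ge \tfrac{1}{2p}\|v\|_1^p - C$. Strict convexity of $v\mapsto \int \phi_\theta^a|\nabla v|^p$ implies weak lower semicontinuity, and the direct method produces a unique minimizer $\psi_\theta\in W$.

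To pass to the Euler-Lagrange equation, fix $\var\in \mathcal{C}^\infty_0(\O)$. The weight $\phi_\theta^a$ is bounded between two positive constants on $\mathrm{Supp}(\var)\subset\subset\O$, so $t\mapsto J(\psi_\theta+t\var)$ is $\mathcal{C}^1$, and vanishing of its derivative at $0$ is precisely the distributional formulation of \eqref{psq}. For the local regularity claim, on any $\O'\subset\subset \O$ the same lower bound on $\phi_\theta^a$ yields
\[
\int_{\O'}|\nabla \psi_\theta|^p\,dx \le C(\O')\int_{\O'}\phi_\theta^a|\nabla \psi_\theta|^p\,dx \le C(\O')\|\psi_\theta\|_1^p<\infty,
\]
and the weighted Hardy inequality above controls $\psi_\theta$ in $L^p_{loc}(\O)$, giving $\psi_\theta\in W^{1,p}_{loc}(\O)$.

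The main obstacle is the degeneracy/singularity of $\phi_\theta^a$ at $\p\O$: if $a>0$ the weight vanishes, and if $a<0$ it blows up. What rescues the argument is the strict inequality $as<p-1$, which places $\d^{as}$ in the class of weights for which Theorem \ref{w-hardy} provides both the Poincar\'e-type bound securing coercivity and the density of $\mathcal{C}^\infty_0(\O)$ needed to make $W$ a genuine reflexive function space; this is exactly the role played by the hypothesis $a<(p-1)/s$.
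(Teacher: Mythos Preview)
Your proposal does not address the stated theorem. Theorem \ref{w-hardy} is the weighted Hardy inequality
\[
\int_{\Omega} \delta^{\s-p}(x)|v|^p\,dx \le C \int_{\Omega} \delta^{\s}(x)|\nabla v|^p\,dx,\qquad v\in\mathcal{C}^\infty_0(\O),\ 0<\s<p-1.
\]
What you have actually written is a proof of Lemma \ref{aux}: the existence of a distributional solution $\psi_\theta\in W^{1,p}_{loc}(\O)$ to the degenerate $p$-Laplacian problem \eqref{psq}. Your argument even \emph{invokes} Theorem \ref{w-hardy} (with $\s=as$) to obtain coercivity of the functional $J$, so as a proof of Theorem \ref{w-hardy} it is circular.

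In the paper, Theorem \ref{w-hardy} is not proved at all; it is merely recalled from Ne\v{c}as \cite{Ne}, Theorem 1.6, and then used as a tool in the proof of Lemma \ref{aux}. If your intent was to prove Lemma \ref{aux}, then your direct variational approach in the weighted space $W^{1,p}_0(\d^{as}dx,\O)$ is a legitimate alternative to the paper's argument, which instead regularizes the weight to $(\phi_\theta+\frac1n)^a$, solves the nondegenerate problems \eqref{Pn}, obtains uniform bounds via Theorem \ref{w-hardy}, and passes to the limit. Your route is shorter but leaves unverified the reflexivity of $W$ and the density of $\mathcal{C}^\infty_0(\O)$ in it (you say the latter is ``needed'' but do not check it), and the assertion that strict convexity alone gives weak lower semicontinuity is imprecise---you want convexity together with strong continuity of $J$.
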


{\bf Proof of Lemma \ref{aux}.}
By the results of \cite{ROS}, we know that
$$
C_1 \delta^{s}(x) \leq \phi_{\theta}(x) \leq C_2 \delta^{s}(x) \,, \qquad \forall\ x \in \Omega,
$$
with $C_1,C_2>0$. For $n \in \N$, we consider the approximate problems
\begin{equation} \label{Pn}
\left\{
\begin{aligned}
-\text{div}\left( \left(\phi_{\theta} + \frac{1}{n} \right)^a|\nabla \psi_n|^{p-2} \psi_n \right) & = \theta\,, & \quad \textup{ in } \Omega\,,\\
\psi_n & = 0\,, & \textup{ on } \partial \Omega\,.
\end{aligned}
\right.
\end{equation}
It is clear that the existence of $\psi_n$ follows using classical variational argument where we obtain also that $\psi_n \in W_0^{1,p}(\Omega) \cap L^\infty(\O)$. Using $\psi_n$ as test function in \eqref{Pn}, we get
\begin{equation} \label{es1}
C_1 \int_{\Omega} \delta^{as}(x) |\nabla \psi_n|^p dx \leq \int_{\Omega} \theta \psi_n (x) dx.
\end{equation}
Hence by Theorem \ref{w-hardy} and choosing $\s=as\in (p-1,p)$, it holds that
\begin{equation} \label{es11}
C_1 \int_{\Omega} \delta^{as}(x) |\nabla \psi_n|^p dx\le ||\theta||_{L^{p'}(\O)}||w||_{L^{p}(\O)}\le C||\theta||_{L^{p'}(\O)}\bigg(\int_{\Omega} \delta^{as}(x) |\nabla \psi_n|^p dx\bigg)^{\frac{1}{p}}.
\end{equation}
Thus $\int_{\Omega} \delta^{as}(x) |\nabla \psi_n|^p dx\le C\mbox{  for all }n.$ Hence $\{\psi_n\}_n$ is bounded in the space ${ W^{1,p}_0(\d^{as}(x) dx, \O)}$. Therefore,
using again  Theorem \ref{w-hardy}, it holds that $\dyle\int_{\Omega} \frac{|\psi_n|^p}{\d^{p-as}}dx\le C$, for all $n$.

\noindent Then, up to a subsequence, we get the existence of
$\psi \in { W^{1,p}_0(\d^{as}(x) dx, \O)}$ such that $\psi_n \rightharpoonup \psi$ weakly in ${W^{1,p}_0(\d^{as}(x) dx, \O)}$, and then $\psi_n \to \psi$ in $L^{\sigma}_{loc}(\Omega)$ for all $1 \leq \sigma < p^*$ and $\psi_n \to \psi$ a.e. in $\Omega$.

\noindent Using Vitali's lemma we can prove that $\psi_n\to \psi$ strongly in $L^p(\Omega)$.

\noindent It is not difficult to show that $\psi$ is a distributional solution to problem \eqref{psq}.

Let show that $\psi_n \to \psi$ strongly in $W^{1,p}_0(\O, \d^{as}(x)dx)$.

Using $(\psi_n - \psi)$ as test function in \eqref{Pn} and having into account that
$$
\io \theta(\psi_n-\psi) dx\to 0\mbox{  as  }n\to \infty,
$$
it follows that
\begin{equation} \label{es4}
\begin{aligned}
C_1 \io \delta^{as}(x)|\nabla \psi_n|^{p-2} \nabla \psi_n \nabla (\psi_n - \psi) dx \leq o(1).
\end{aligned}
\end{equation}
Since
$$
\io \delta^{as}(x)|\nabla \psi_n|^{p-2} \nabla \psi_n \nabla (\psi_n - \psi) dx =\io \delta^{as}(x)\bigg(|\nabla \psi_n|^{p-2} \nabla \psi_n-|\nabla \psi|^{p-2} \nabla \psi \bigg) \nabla (\psi_n - \psi) dx +o(1),
$$
then the result follows.  In a similar way one can show the uniqueness of the solution in the space  ${ W^{1,p}_0(\d^{as}(x) dx, \O)}$.
\qed

\

To finish this section we prove the next non existence result for $\mu$ large.

\begin{Theorem}\label{non21}
Assume that  $s\in (\frac 12, 1)$, then there exists $\mu^*>0$ such that if $\mu>\mu^*$, the problem \eqref{main0} has no positive solution $u$ in the sense of Definition \ref{def2}.
\end{Theorem}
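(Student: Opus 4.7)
The plan is to derive an \emph{a priori} upper bound on $\mu$ by testing the equation against the first positive Dirichlet eigenfunction of $(-\Delta)^s$ and then extracting an algebraic obstruction.

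Let $\phi_1 \in H_0^s(\Omega) \cap L^\infty(\Omega)$, with $\phi_1 > 0$ in $\Omega$ and $\phi_1 \equiv 0$ in $\mathbb{R}^N \setminus \Omega$, be a positive first eigenfunction: $(-\Delta)^s \phi_1 = \lambda_1 \phi_1$ in $\Omega$. By the boundary regularity of \cite{ROS} one has $\phi_1 \backsimeq \delta^s$. Using $\phi_1$ as a test function in Definition \ref{def1}---approximating $\lambda_1 \phi_1$ by a sequence in $\mathcal{C}_0^\infty(\Omega)$, inserting the corresponding solutions from $\mathcal{T}(\Omega)$, and passing to the limit (justified by the $L^1$-integrability of $u$, $u/|x|^{2s}$, $|\nabla u|^p$, and $f$)---one obtains
\[
\lambda_1 \int_\Omega u \phi_1\, dx \;=\; \lambda \int_\Omega \frac{u \phi_1}{|x|^{2s}}\, dx + \int_\Omega |\nabla u|^p \phi_1\, dx + \mu \int_\Omega f \phi_1\, dx,
\]
and dropping the nonnegative Hardy term yields
\[
\lambda_1 \int_\Omega u \phi_1\, dx \;\ge\; \int_\Omega |\nabla u|^p \phi_1\, dx + \mu \int_\Omega f \phi_1\, dx. \qquad (*)
\]

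The second ingredient is a weighted Poincar\'e-type estimate
\[
\int_\Omega u^p \phi_1\, dx \;\le\; C_1 \int_\Omega |\nabla u|^p \phi_1\, dx, \qquad u \in W^{1,p}_0(\Omega),
\]
which, via H\"older's inequality, upgrades to $\int_\Omega u \phi_1\, dx \le C_2\, I^{1/p}$ with $I := \int_\Omega |\nabla u|^p \phi_1\, dx$. Plugged into $(*)$ this gives
\[
\lambda_1 C_2\, I^{1/p} \;\ge\; I + \mu \int_\Omega f \phi_1\, dx.
\]
The map $t \mapsto \lambda_1 C_2\, t^{1/p} - t$ attains a finite maximum $M > 0$ on $[0,\infty)$ at $t_\star = (\lambda_1 C_2/p)^{p/(p-1)}$, forcing $\mu \le \mu^\star := M / \int_\Omega f \phi_1\, dx$. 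Any $\mu > \mu^\star$ is thus incompatible with the existence of a positive solution.

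The main obstacle is the weighted Poincar\'e inequality displayed above. With $\phi_1 \backsimeq \delta^s$, Theorem \ref{w-hardy} gives the estimate directly in the range $p > s+1$ (via the choice $\sigma = s$), but the relevant regime $1 < p \le p_+(\lambda,s) < 2s \le s+1$ lies outside the Muckenhoupt $A_p$ window for $\delta^s$ and so requires more care. I would decompose $\Omega$ into an interior piece $\Omega_\eta = \{\delta > \eta\}$ (where $\phi_1 \ge c(\eta) > 0$ and the standard unweighted Poincar\'e inequality applies) and a tubular boundary neighborhood handled via a localized weighted Hardy-type bound; that should close the estimate. Once the weighted inequality is in hand the remainder of the argument is purely algebraic. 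For $p \ge p_+(\lambda,s)$, nonexistence for every $\mu > 0$ is already given by Theorem \ref{non1}, so one combines the two bounds to produce a single $\mu^\star$ valid for all $p$.
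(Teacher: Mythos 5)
Your overall architecture (test with a dual function to trade the equation for an integral identity, then extract an algebraic obstruction on $\mu$) is the same as the paper's, but the step you flag as the ``main obstacle'' is not merely technical --- the weighted Poincar\'e inequality
\[
\int_\Omega u^p\,\phi_1\,dx \;\le\; C_1\int_\Omega |\nabla u|^p\,\phi_1\,dx
\]
is \emph{false} in the range you need. Since $\phi_1\simeq\delta^s$ and $p<p_+(\lambda,s)<2s<s+1$, you are precisely in the regime $s>p-1$. Take $u_\e=\min(\delta/\e,1)\in W^{1,p}_0(\O)$. Then
\[
\int_\Omega |\nabla u_\e|^p\,\delta^s\,dx \;\simeq\; \e^{-p}\int_{\{\delta<\e\}}\delta^s\,dx \;\simeq\; \e^{\,s+1-p}\;\longrightarrow\;0
\]
as $\e\to0$, while $\int_\Omega u_\e^p\,\delta^s\,dx\to\int_\Omega\delta^s\,dx>0$. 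So the ratio blows up. The same family kills the derived inequality $\int_\Omega u\,\phi_1\,dx\le C_2\,I^{1/p}$, and with it the algebraic argument ($\lambda_1 C_2\,I^{1/p}\ge I+\mu\int f\phi_1$), because without that bound the relation $\lambda_1\int u\phi_1\ge I+\mu\int f\phi_1$ merely says $u$ is large --- no contradiction. Your proposed repair (split $\Omega$ into $\{\delta>\eta\}$ and a boundary layer) does not close either: on the interior piece $u$ no longer vanishes on $\partial\{\delta>\eta\}$, so the standard Poincar\'e is unavailable and the interior boundary values are exactly what the counterexample makes uncontrollable from $\int|\nabla u|^p\delta^s$.

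The paper sidesteps this entirely. Rather than comparing $\int u\,\theta$ to $\int|\nabla u|^p\phi_\theta$ via a weighted Poincar\'e, it introduces the auxiliary weighted $p$-Laplacian solution $\psi_\theta$ solving $-\dive(\phi_\theta|\nabla\psi_\theta|^{p-2}\nabla\psi_\theta)=\theta$ (the existence being Lemma~\ref{aux}). Then
\[
\int_\Omega u\,\theta\,dx \;=\;\int_\Omega \phi_\theta\,|\nabla\psi_\theta|^{p-2}\nabla\psi_\theta\cdot\nabla u\,dx
\;\le\;\e\int_\Omega|\nabla u|^p\phi_\theta\,dx + C(\e)\int_\Omega\phi_\theta|\nabla\psi_\theta|^p\,dx,
\]
and the first term is absorbed into the left-hand side of the test-function identity, leaving $\mu\int f\phi_\theta\le C(\e)\int\phi_\theta|\nabla\psi_\theta|^p$, an upper bound for $\mu$ independent of $u$. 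In other words, the trick is to represent $\theta$ itself as a weighted divergence so that $\int u\theta$ naturally pairs with $\nabla u$ and the gradient term in the equation; no Poincar\'e-type control of $\int u\phi_1$ is ever needed. You would need to replace your step two by some such construction to make the argument go through.
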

\begin{proof}
We follow closely the proof of the fourth case in Theorem \ref{non1}, see also \cite{AA}. Assume that $u$ is a positive solution to problem \eqref{main0} in the sense of Definition \ref{def2}. Let $\theta\in \mathcal{C}^\infty_0(\O)$ be a nonnegative fixed function and define $\phi_{\theta} \in H_0^s(\Omega) \cap L^\infty(\O)$, the unique solution of the problem
\begin{equation} \label{eqphi01}
\left\{
\begin{aligned}
(-\D)^s \phi_{\theta} & = \theta\,, & \quad \textup{ in } \Omega\,, \\
\phi_{\theta} & = 0\,, & \textup{ in } \ren \setminus {\Omega}.
\end{aligned}
\right.
\end{equation}
Since $\theta$ is bounded, according with \cite{ROS}, then $\phi_{\theta}\simeq \d^s$.

Using $\phi_{\theta}$ as test function in \eqref{main0}, it holds that
\begin{equation}\label{eq10}
\mu \int_{\Omega} f(x)\phi_\theta(x) dx+ \int_{\Omega} |\nabla u |^p \phi_{\theta}\, dx\le \int_{\Omega} u(-\D)^s\, \phi_{\theta}\, dx = \int_{\Omega} u\, \theta \, dx.
\end{equation}
Now, define $\psi_{\theta} \in W_{loc}^{1,p}(\Omega)$ as the unique solution to the problem
\begin{equation} \label{eqpsi1}
\left\{
\begin{aligned}
-\text{div}( \phi_\theta |\nabla \psi_{\theta}|^{p-2} \nabla \psi_{\theta}) & = \theta\,, & \quad \textup{ in } \Omega\,,\\
\psi_{\theta} & = 0\,, & \textup{ on } \partial \Omega\,.
\end{aligned}
\right.
\end{equation}
Notice that the existence of $\psi_\theta$ follows using the same kind of estimates as in the proof of Lemma \ref{aux}.
Hence
$$
\mu \int_{\Omega} f(x)\phi_\theta(x) dx+ \int_{\Omega} |\nabla u |^p \phi_{\theta}\, dx\le \int_{\Omega} u\, \theta \, dx=-\text{div}( \phi_\theta |\nabla \psi_{\theta}|^{p-2} \nabla \psi_{\theta}).
$$
Thus
\begin{eqnarray*}
\dyle \mu \int_{\Omega} f(x)\phi_\theta(x) dx+ \int_{\Omega} |\nabla u |^p \phi_{\theta}\, dx &\le & \int_{\Omega} \phi_\theta |\nabla \psi_{\theta}|^{p-1}|\nabla u |dx  \phi_{\theta}\, dx\\
&\le & \dyle \e \int_{\Omega} |\nabla u |^p \phi_{\theta}\, dx +C(\e) \int_{\Omega} \phi_\theta |\nabla \psi_{\theta}|^{p}\, dx.
\end{eqnarray*}
Choosing $\e$ small it holds that
$$
\dyle \mu \int_{\Omega} f(x)\phi_\theta(x) dx \le C(\e) \int_{\Omega} \phi_\theta |\nabla \psi_{\theta}|^{p}\, dx.
$$
Thus
$$
\mu\le \inf_{\{\theta \in \mathcal{C}_0^{\infty}(\Omega), \theta\ge 0\}}\dfrac{\dyle\int_{\Omega} \phi_{\theta} |\nabla \psi_{\theta}|^p}{\dyle \int_{\Omega} f(x) \phi_{\theta}\, dx }:=\mu^*,
$$
and the result follows.
\end{proof}
\section{Existence result}\label{4}

In this section we consider the problem
\begin{equation}\label{existencia}
\left\{
\begin{array}{rcll}
(-\Delta)^s u &= &\lambda\dfrac{u}{|x|^{2s}} +|\nabla u|^{p}+\mu f & \mbox{ in }\Omega , \\ u &=& 0 &\hbox{  in } \mathbb{R}^N\setminus\Omega,\\ u&>&0 &\hbox{ in }\Omega,
\end{array}
\right.
\end{equation}
where $0<\lambda<\Lambda_{N,s}$, $\mu>0$, $s\in (\frac 12,1)$, $p<p_{+} (\lambda,s)<2s$ and $f\in L^\sigma(\O)$ for some convenient $\s>1$. The main goal of this section is to show that, under additional hypotheses
on $f$, we are able to build a suitable supersolution and then by a monotonicity argument,  to
prove the existence of a minimal positive solution.

\

Before the statement of  the  existence result of this section, let us recall a
compactness result obtained in \cite{CV2} and the  comparison result that will
be used in this section.
\begin{Theorem}\label{key}
Suppose that $s\in (\frac 12, 1)$ and let $f\in \mathfrak{M}(\O)$, a Radon measure. Then the problem
\begin{equation}\label{gener}
\left\{
\begin{array}{rcll}
(-\Delta)^s v &= & f &
\mbox{ in }\Omega , \\ u &=& 0 &\hbox{  in } \mathbb{R}^N\setminus\Omega,
\end{array}%
\right.
\end{equation}
has a unique weak solution in the sense of Definition \ref{def1}  such that,
\begin{enumerate}
\item  $|\n v|\in M^{p_*,\infty}(\O)$, the Marcinkiewicz space,  with
\begin{equation}\label{p*sub} p_*=\frac{N}{N-2s+1}\end{equation}
and as a consequence
 $v\in W^{1,q}_0(\Omega)$ for all $q<p_*$. Moreover
\begin{equation}\label{dd}
||v||_{W^{1,q}_0(\Omega)}\le C(N,q,\O)||f||_{\mathfrak{M}(\O)}.
\end{equation}
\item For $f\in L^1(\O)$, setting\; $T: L^1(\O)\to  W^{1,q}_0(\Omega)$, with $T(f)=v$, then $T$ is a compact operator.
\end{enumerate}
\end{Theorem}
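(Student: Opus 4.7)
The plan is to establish the three assertions in order, largely following \cite{CV2}: first derive a Marcinkiewicz-type bound on the gradient for $L^1$ data via a Stampacchia-style truncation, then extend to measures by density with uniqueness from the duality with $\mathcal{T}(\O)$, and finally deduce compactness of $T$ from the uniform gradient bound combined with a Vitali-type argument.

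For $f\in L^\infty(\O)$, Theorem \ref{entropi} provides the solution $v\in H^s_0(\O)$. Testing with $T_k(v)$ and using the positivity of the fractional quadratic form yields
$$\|T_k(v)\|_{H^s_0(\O)}^2\le Ck\|f\|_{L^1(\O)},$$
and via the Sobolev embedding $H^s_0(\O)\hookrightarrow L^{2N/(N-2s)}(\O)$ the standard level-set bound $\meas\{|v|>k\}\le Ck^{-N/(N-2s)}$. To reach the sharp gradient exponent $p_*=N/(N-2s+1)$, I would exploit the Green representation $v(x)=\int_\O G_s(x,y)f(y)\,dy$ together with the kernel bound $|\n_x G_s(x,y)|\le C|x-y|^{-(N-2s+1)}$. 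Since $2s-1>0$, the derivative is a Riesz-type kernel of order $2s-1$, so convolution with $L^1$ data lands in $M^{p_*,\infty}(\O)$; the continuous embedding $M^{p_*,\infty}\hookrightarrow L^q$ for $q<p_*$ then yields \eqref{dd} with constants depending only on $N,q,\O$. The hypothesis $s>\tfrac12$ is essential, guaranteeing $p_*>1$.

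For general $f\in\mathfrak{M}(\O)$, approximate by $f_n\in\mathcal{C}^\infty_0(\O)$ with $\|f_n\|_{L^1}\le\|f\|_{\mathfrak{M}}$ and $f_n$ converging to $f$ in the weak-$*$ topology. The uniform Marcinkiewicz bound makes $\{v_n\}$ bounded in $W^{1,q}_0(\O)$; passing to the limit in Definition \ref{def1} is legitimate since each $\phi\in\mathcal{T}(\O)$ is continuous and $(-\Delta)^s\phi\in\mathcal{C}^\infty_0(\O)$ is arbitrary. Uniqueness follows from the same duality: a weak solution with zero data satisfies $\io v\psi\,dx=0$ for every $\psi\in\mathcal{C}^\infty_0(\O)$, forcing $v\equiv 0$. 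For compactness, let $\{f_n\}\subset L^1(\O)$ be bounded and set $v_n=T(f_n)$; the uniform $M^{p_*,\infty}$ bound yields the equi-integrability
$$\int_E|\n v_n|^q\,dx\le C|E|^{1-q/p_*},\qquad q<p_*.$$
Extracting weakly convergent subsequences in $W^{1,q}_0(\O)$ and weakly-$*$ in $\mathfrak{M}(\O)$ with limit $\mu$, the Green representation gives pointwise almost-everywhere convergence of both $v_n$ and $\n v_n$ to $G_s*\mu$ and $\n(G_s*\mu)$ respectively; Vitali's lemma then upgrades this to strong convergence in $W^{1,q}_0(\O)$.

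The main obstacle is the sharp gradient Marcinkiewicz estimate at exponent $p_*$: nonlocality of $(-\Delta)^s$ precludes classical local integration by parts, so one must either implement truncations entirely inside the nonlocal bilinear form or bypass the PDE through the Green kernel, and in either case the condition $s>1/2$ is indispensable to produce a power of $|x-y|$ that gives a nontrivial Sobolev gradient space. A secondary subtlety is the compactness step, where equi-integrability alone is insufficient; one must extract pointwise almost-everywhere convergence of the gradients, which here is delivered only by exploiting the linearity of the equation and the explicit kernel representation.
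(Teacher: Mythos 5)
The paper does not prove this theorem; it quotes it as a compactness result from \cite{CV2} (Chen--V\'eron), and uses the kernel estimates of Lemma~\ref{estimmm} only later. So there is no internal proof to compare against, and your sketch follows the spirit of that reference. However, the central technical step is stated incorrectly, and the argument as written would not close.

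The pointwise bound $|\nabla_x \mathcal{G}_s(x,y)|\le C|x-y|^{-(N-2s+1)}$ is false on a bounded domain with Dirichlet exterior condition. Combining the two inequalities of Lemma~\ref{estimmm}, that bound is valid only where $\delta(x)\ge|x-y|$; in the complementary region $\{\delta(x)<|x-y|\}$ one gets $|\nabla_x \mathcal{G}_s(x,y)|\lesssim \delta^{s-1}(x)/|x-y|^{N-s}$, which is strictly larger than $|x-y|^{-(N-2s+1)}$ there and blows up as $\delta(x)\to 0$ because $s<1$. Consequently $\nabla_x \mathcal{G}_s(x,\cdot)$ is not a truncated Riesz kernel of order $2s-1$, and the classical level-set estimate for Riesz potentials of $L^1$ data does not apply as you assert. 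The $M^{p_*,\infty}$ conclusion is still true, but only after splitting $\Omega$ into the interior region (where your heuristic works) and the boundary region, where one must use that $\delta^{(s-1)q}$ is integrable for all $q<p_*$ (equivalently $(1-s)p_*<1$, i.e.\ $s(N-2)>-1$, which holds since $N>2s\ge1$), and then merge the two contributions with a careful two-region distribution-function estimate; this is precisely the content of the key lemma in \cite{CV2}. Without that splitting the assertion $|\nabla v|\in M^{p_*,\infty}(\Omega)$ is unsupported. A secondary gap is in the compactness step: weak-$*$ convergence $f_n\rightharpoonup\mu$ in $\mathfrak{M}(\Omega)$ does not yield pointwise a.e.\ convergence of $\nabla v_n(x)=\int\nabla_x\mathcal{G}_s(x,y)f_n(y)\,dy$, since $\nabla_x\mathcal{G}_s(x,\cdot)$ is not a bounded continuous function near $y=x$; one must obtain a.e.\ convergence of the gradients by a Boccardo--Gallou\"et type argument on the truncations $T_k(v_n)$ (or by showing $\nabla v_n$ is Cauchy in measure), and only then can Vitali's theorem be invoked.
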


The next comparison principle is proved in \cite{APN}, and extends the one proved in \cite{APi} in the local case.
\begin{Theorem}\label{compa2}(Comparison Principle).
Let $g\in L^1(\O)$ be a nonnegative function and consider $p_*$ defined in \eqref{p*sub}.  Assume that for all $\xi_1,\xi_2\in \mathbb{R}^N$,
$$H:\Omega\times \mathbb{R}^N\rightarrow\mathbb{R}^+ \hbox{  satisfies   } |H(x,\xi_1)-H(x,\xi_2)|\le C b(x)|\xi_1-\xi_2|$$
where $b\in L^\s(\O)$ for some $\s>\frac{N}{2s-1}$. Consider  $w_1, w_2$ two positive functions such that $w_1, w_2\in W^{1,p}(\O)$ for all $p<p_*$, $(-\Delta)^s
w_1, (-\Delta)^s  w_2\in L^1(\O)$,  $w_1\le w_2$ in $\ren \setminus\Omega$ and
\begin{equation}\label{eq:compar1} \left\{
\begin{array}{rcll}
(-\Delta)^s  w_1 &\le &  H(x,\nabla w_1)+g & \hbox{ in  }\Omega,\\ (-\Delta)^s w_2 & \ge & H(x,\nabla w_2)+g & \hbox{ in  }\Omega.
\end{array} \right.
\end{equation}
Then, $w_2\ge w_1$ in $\Omega$.
\end{Theorem}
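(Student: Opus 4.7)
The plan is to show $w_1 \leq w_2$ in $\Omega$ by contradiction. Set $W := w_1 - w_2$; since $w_1 \leq w_2$ outside $\Omega$, the positive part $W_+$ vanishes on $\mathbb{R}^N \setminus \Omega$ and lies in $W^{1,p}_0(\Omega)$ for every $p < p_*$. The goal is to show $W_+ \equiv 0$.

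First, subtracting the two inequalities in \eqref{eq:compar1} and invoking the Lipschitz bound on $H$ with respect to the gradient variable yields the key distributional inequality
\begin{equation*}
(-\Delta)^s W \leq C\, b(x)\, |\nabla W| \qquad \text{in } \Omega,
\end{equation*}
which is meaningful since $(-\Delta)^s w_i \in L^1(\Omega)$ and $b\,|\nabla W| \in L^1(\Omega)$ by H\"older (using $b \in L^\sigma$ together with $\nabla W \in L^q$ for $q$ close to $p_*$).

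Next I would test this inequality against the truncations $T_k(W_+)$, which lie in $H^s_0(\Omega) \cap L^\infty(\Omega)$ after a regularization inside the test class $\mathcal{T}(\Omega)$, hence are admissible in the sense of Definition \ref{def1}. The pointwise algebraic inequality $(a-b)(T_k(a_+) - T_k(b_+)) \geq (T_k(a_+) - T_k(b_+))^2$, inserted in the Gagliardo bilinear form of $(-\Delta)^s$, gives
\begin{equation*}
\| T_k(W_+) \|_{H^s_0(\Omega)}^2 \leq C \int_\Omega b(x)\, |\nabla W_+|\, T_k(W_+)\, dx.
\end{equation*}
H\"older's inequality with exponents $(\sigma, q, r)$ summing to $1$ in reciprocals, combined with the fractional Sobolev embedding $H^s_0(\Omega) \hookrightarrow L^{2^*_s}(\Omega)$, $2^*_s = 2N/(N-2s)$, and the a priori bound $\nabla W_+ \in L^q(\Omega)$ for $q<p_*$, produces an estimate of the form
\begin{equation*}
\| T_k(W_+) \|_{H^s_0(\Omega)}^2 \leq C\, \|b\|_{L^\sigma(\{W > 0\})}\, \|\nabla W_+\|_{L^q(\{W > 0\})}\, \|T_k(W_+)\|_{H^s_0(\Omega)}.
\end{equation*}
Closure to $W_+ \equiv 0$ is then carried out by a Stampacchia-type iteration on the super-level sets $\{W > k\}$: as $k \to \infty$, $|\{W > k\}| \to 0$, so absolute continuity of the integral forces $\|b\|_{L^\sigma(\{W > k\})} \to 0$, which suffices to absorb the factor $\|\nabla W_+\|_{L^q}$ and conclude $\|W_+\|_{H^s_0(\Omega)} = 0$.

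The main obstacles are twofold. First, the regularity mismatch: $W \in W^{1,p}_0(\Omega)$ for $p < p_*$ does not directly place $W_+$ in $H^s_0(\Omega)$, which is why the argument must proceed via the bounded truncation $T_k(W_+)$ (whose membership in $H^s_0$ is straightforward) together with the density and compactness supplied by Theorem \ref{key}. Second, the arithmetic of the H\"older exponents: matching $q < p_*$ with $r \leq 2^*_s$ in a triple whose reciprocals sum to $1$ is possible precisely under the threshold $\sigma > N/(2s-1)$, which is therefore the sharp hypothesis making the absorption step feasible.
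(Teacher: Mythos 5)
The proposal has the right opening move (subtract, use the Lipschitz bound to get $(-\Delta)^s W \le C\,b(x)\,|\nabla W|$, pass to $W_+$), and the algebraic inequality $(a-b)\bigl(T_k(a_+)-T_k(b_+)\bigr)\ge\bigl(T_k(a_+)-T_k(b_+)\bigr)^2$ is correct; but the energy/truncation route you propose after that breaks down in three places. First, the exponent bookkeeping does not close: for the triple $(\sigma,q,r)$ with $1/\sigma+1/q+1/r=1$, $q<p_*=\frac{N}{N-2s+1}$ and $r\le 2^*_s=\frac{2N}{N-2s}$, one needs $\frac{1}{\sigma}<1-\frac{1}{p_*}-\frac{1}{2^*_s}=\frac{6s-2-N}{2N}$, which is \emph{negative} (hence unattainable) as soon as $N>6s-2$, and in the remaining low dimensions it is strictly stronger than the hypothesis $\sigma>\frac{N}{2s-1}$ (because $N>2s$). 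The threshold $\sigma>\frac{N}{2s-1}$ is exactly the dual exponent $\sigma>(p_*)'$, i.e.\ it is calibrated to the two-term pairing $\int b\,|\nabla W_+|\,dx\le\|b\|_{L^\sigma}\|\nabla W_+\|_{L^{\sigma'}}$ — not to a three-term pairing involving the Sobolev class of $T_k(W_+)$. Second, the ``Stampacchia iteration'' step is not sound: when you test with $T_k(W_+)$, the right-hand integral $\int b|\nabla W_+|T_k(W_+)$ is supported on $\{W>0\}$, not on $\{W>k\}$, so sending $k\to\infty$ does not make $\|b\|_{L^\sigma}$ vanish over the relevant set; and even if one tested with $(W-k)_+$ instead, such level-set arguments at best deliver $W_+\in L^\infty$, not $W_+\equiv 0$. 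Third, the conclusion $\|W_+\|_{H^s_0}=0$ presupposes $W_+\in H^s_0(\Omega)$, but the only a priori information is $W_+\in W^{1,q}_0(\Omega)$ for $q<p_*$, and $W^{1,q}_0$ with $q<p_*<\frac{2N}{N-2s+2}$ does \emph{not} embed into $H^s_0(\Omega)$; the bounds you derive on $\|T_k(W_+)\|_{H^s_0}$ do not pass to the limit $k\to\infty$.

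The proof the paper relies on (the result is cited from \cite{APN}, Theorem~3.1, and the same mechanism underlies the sibling Proposition~\ref{lm:comp}) deliberately avoids the Hilbert-space/Caccioppoli route precisely because of the low regularity of $W_+$. Instead, after Kato's inequality reduces matters to a nonnegative $v=W_+$ with $v=0$ outside $\Omega$ and $(-\Delta)^s v\le C\,b\,|\nabla v|\in L^1(\Omega)$ (integrability follows from H\"older with $\sigma'<p_*$), one invokes the Green-function representation $v(x)=\int\mathcal{G}_s(x,y)h(y)\,dy$ together with the gradient bound $|\nabla_x\mathcal{G}_s(x,y)|\le C\,\mathcal{G}_s(x,y)\max\{|x-y|^{-1},\delta(x)^{-1}\}$ of Lemma~\ref{estimmm}, and runs a bootstrap/contraction estimate on $\|\nabla v\|_{L^q}$ for $q<p_*$. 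It is this kernel-level estimate — not an energy identity — that exploits the threshold $\sigma>\frac{N}{2s-1}$ and forces $v\equiv 0$. If you want to repair the argument, you need to replace steps 3--6 by that representation-formula machinery; the truncation–Sobolev route cannot be made to work under the stated regularity.
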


As a consequence, we have the following
\begin{Theorem}\label{comparacion}
Assume that $g\in L^1(\O)$ is a nonnegative function. Let $w_1, w_2$ be two nonnegative functions such that $w_1, w_2\in W^{1,p}(\O)$ for some $1\le q<p_*$,
$(-\Delta)^s  w_1, (-\Delta)^s  w_2\in L^1(\O)$,  $w_1\le w_2$ in $\ren \setminus\Omega$ and
\begin{equation}\label{comparacion1} \left\{
\begin{array}{rcll}
(-\Delta)^s  w_1 &\le &  |\n w_1|^q+g & \hbox{ in  }\Omega,\\ (-\Delta)^s w_2 & \ge & |\n w_2|^q+g & \hbox{ in  }\Omega.
\end{array} \right.
\end{equation}
Then, $w_2\ge w_1$ in $\ren$.
\end{Theorem}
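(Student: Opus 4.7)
The goal is to deduce Theorem \ref{comparacion} from the more general comparison Theorem \ref{compa2}. The obstacle is that $H(x,\xi)=|\xi|^q$ is not globally Lipschitz in $\xi$ with a weight $b(x)$ belonging to $L^\sigma(\Omega)$ for $\sigma>\tfrac{N}{2s-1}$; the ``natural'' Lipschitz constant grows like $|\xi|^{q-1}$, so we cannot plug $H$ into Theorem \ref{compa2} as it stands. The plan is therefore to replace $H$ by a truncated/affinely extended version $\widetilde H$ that (i) coincides with $|\xi|^q$ at $\nabla w_1(x)$ and $\nabla w_2(x)$ (so $w_1$, $w_2$ remain sub/supersolution for the modified equation), and (ii) is globally Lipschitz in $\xi$ with a weight $b(x)\in L^\sigma(\Omega)$ for some admissible $\sigma$.

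\textbf{Step 1 (definition of $\widetilde H$).} Set $M(x):=|\n w_1(x)|+|\n w_2(x)|$ and define
\[
\widetilde H(x,\xi)=
\begin{cases}
|\xi|^q, & |\xi|\le M(x),\\[2pt]
M(x)^q+qM(x)^{q-1}\bigl(|\xi|-M(x)\bigr), & |\xi|>M(x).
\end{cases}
\]
Since $\widetilde H(x,\cdot)$ is radial and its radial derivative is non-decreasing on $[0,M(x)]$ and constant equal to $qM(x)^{q-1}$ afterwards, one checks that for a.e.\ $x\in\Omega$ and every $\xi_1,\xi_2\in\R^N$
\[
|\widetilde H(x,\xi_1)-\widetilde H(x,\xi_2)|\le qM(x)^{q-1}|\xi_1-\xi_2|.
\]
Moreover, since $|\n w_i(x)|\le M(x)$, we have $\widetilde H(x,\n w_i)=|\n w_i|^q$ for $i=1,2$, so the hypotheses \eqref{comparacion1} imply
\[
(-\Delta)^s w_1\le \widetilde H(x,\n w_1)+g,\qquad
(-\Delta)^s w_2\ge \widetilde H(x,\n w_2)+g,\qquad \text{in }\Omega.
\]

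\textbf{Step 2 (integrability of the Lipschitz weight).} Set $b(x):=qM(x)^{q-1}$. By hypothesis $|\n w_i|\in L^p(\Omega)$ for every $p<p_*=\tfrac{N}{N-2s+1}$, so $b\in L^{p/(q-1)}(\Omega)$ for every such $p$ (the case $q=1$ is trivial: $b\equiv q$). We need $\sigma:=p/(q-1)>\tfrac{N}{2s-1}$, that is $p>\tfrac{N(q-1)}{2s-1}$. The compatibility with $p<p_*$ reduces to
\[
\frac{N(q-1)}{2s-1}<\frac{N}{N-2s+1}\quad\Longleftrightarrow\quad q<1+\frac{2s-1}{N-2s+1}=p_*,
\]
which holds exactly under the assumption $q<p_*$. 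Hence such a $p$ exists and $b\in L^{\sigma}(\Omega)$ for some $\sigma>\tfrac{N}{2s-1}$.

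\textbf{Step 3 (application of Theorem \ref{compa2} and conclusion).} With $\widetilde H$ in place of $H$, all hypotheses of Theorem \ref{compa2} are met: $w_1,w_2\in W^{1,p}(\Omega)$ for every $p<p_*$ with $(-\Delta)^s w_i\in L^1(\Omega)$, $w_1\le w_2$ on $\R^N\setminus\Omega$, $g\ge 0$ belongs to $L^1(\Omega)$, and $\widetilde H$ is Lipschitz in $\xi$ with weight $b\in L^\sigma(\Omega)$, $\sigma>\tfrac{N}{2s-1}$. Theorem \ref{compa2} then yields $w_2\ge w_1$ in $\Omega$; combining with the assumption $w_1\le w_2$ in $\R^N\setminus\Omega$ gives $w_2\ge w_1$ in $\R^N$.

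The only non-routine point is the construction in Step 1: one must design $\widetilde H$ so that the modification both preserves the pointwise sub/supersolution inequalities at $\n w_1$ and $\n w_2$ and is globally Lipschitz in $\xi$ with a weight whose $L^\sigma$-norm can be controlled by $\|M\|_{L^p}$; the affine radial extension above does exactly this, and Step 2 is the precise computation showing that the constraint $q<p_*$ is what makes the weight admissible in Theorem \ref{compa2}.
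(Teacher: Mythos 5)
Your proposal is correct and fills in exactly the reduction that the paper leaves implicit when it introduces Theorem \ref{comparacion} ``as a consequence'' of Theorem \ref{compa2}. The key idea --- replace $|\xi|^q$ by an affinely extended, globally Lipschitz $\widetilde H(x,\cdot)$ that agrees with $|\xi|^q$ on the ball $\{|\xi|\le |\n w_1(x)|+|\n w_2(x)|\}$, so that $w_1,w_2$ remain sub/supersolutions while the Lipschitz weight $b(x)=q\bigl(|\n w_1|+|\n w_2|\bigr)^{q-1}$ lies in $L^\sigma(\Omega)$ for some $\sigma>\tfrac{N}{2s-1}$ precisely because $q<p_*$ --- is the standard linearization step one must perform here, and your Step 2 computation confirming that the threshold $q<p_*$ is exactly what makes $b$ admissible is the genuinely non-trivial part. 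This matches what the paper (and the cited reference \cite{APN}) intend; no substantive difference from the paper's approach.
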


Since we will use the representation formula for the solution of problem
\eqref{gener}, then we recall  the main properties of the Green function
associated to the fractional laplacian. The proof can be found in  \cite{BJ0},
\cite{BJ1} \cite{BJ}, using a probabilistic approach.
\begin{Lemma}\label{estimmm}
Let $\mathcal{G}_s$ be the Green kernel of $(-\Delta)^s$ and suppose that $s\in (\frac 12,1)$, then
\begin{equation}\label{green1}
\mathcal{G}_s(x,y)\le C_1\min\{\frac{1}{|x-y|^{N-2s}}, \frac{\d^s(x)}{|x-y|^{N-s}}, \frac{\d^s(y)}{|x-y|^{N-s}}\},
\end{equation}
and
\begin{equation}\label{green2}
|\n_x \mathcal{G}_s(x,y)|\le C_2 \mathcal{G}_s(x,y)\max\{\frac{1}{|x-y|}, \frac{1}{\d(x)}\}.
\end{equation}
\end{Lemma}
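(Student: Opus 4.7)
The plan is to split the argument into the pointwise bound \eqref{green1} and the gradient bound \eqref{green2}, treating them in that order.

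For \eqref{green1} I would use the probabilistic identity
$$
\mathcal{G}_s(x,y)=\int_0^{\infty}p^{\Omega}_t(x,y)\,dt,
$$
where $p^{\Omega}_t$ is the transition density of the symmetric $2s$-stable L\'evy process killed upon exiting $\Omega$. The global bound $\mathcal{G}_s(x,y)\le C|x-y|^{2s-N}$ follows by monotonicity from the free stable heat kernel estimate $p_t(x-y)\le C\min\{t^{-N/(2s)},\,t|x-y|^{-N-2s}\}$, whose integral in time reproduces the Riesz potential. For the boundary factors $\d^s(x)$ and $\d^s(y)$ I would invoke the two-sided Chen-Kim-Song heat kernel estimates
$$
p^{\Omega}_t(x,y)\le C\Bigl(1\wedge \tfrac{\d(x)^s}{\sqrt{t}\wedge |x-y|^s}\Bigr)\Bigl(1\wedge \tfrac{\d(y)^s}{\sqrt{t}\wedge |x-y|^s}\Bigr)p_t(x-y),
$$
and then split the time integral at the crossing time $t=|x-y|^{2s}$ to recover the three-way minimum. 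An equivalent analytic route is to combine the Riesz bound with a barrier argument based on the torsion function $\phi_{\Omega}$, which satisfies $\phi_{\Omega}\asymp \d^s$ by Ros-Oton--Serra: the boundary Harnack principle for $s$-harmonic functions then propagates the $\d^s(x)$ decay to $\mathcal{G}_s(\cdot,y)$ away from the pole, and the symmetry $\mathcal{G}_s(x,y)=\mathcal{G}_s(y,x)$ yields the $\d^s(y)$ factor.

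For \eqref{green2} I would distinguish the interior regime $|x-y|\lesssim \d(x)$ from the boundary regime $\d(x)\ll |x-y|$. In the interior regime, $\mathcal{G}_s(\cdot,y)$ is $s$-harmonic on $B_{|x-y|/2}(x)$, and the scaled interior regularity estimate of Caffarelli-Silvestre combined with the elliptic Harnack inequality produces
$$
|\n_x \mathcal{G}_s(x,y)|\le \dfrac{C}{|x-y|}\,\mathcal{G}_s(x,y),
$$
which matches the scaling obtained by directly differentiating the Riesz kernel. In the boundary regime, the bound already proved in the first part gives $\mathcal{G}_s(x,y)\asymp \d(x)^s/|x-y|^{N-s}$, and differentiating the $\d^s$ barrier forces the $\d^{s-1}$ blow-up, yielding $|\n_x\mathcal{G}_s(x,y)|\le C\mathcal{G}_s(x,y)/\d(x)$. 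Taking the maximum between the two regimes produces \eqref{green2}.

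The main obstacle lies in the boundary regime of \eqref{green2}: reconciling the global $\d^s$ decay of $\mathcal{G}_s$ with the local $\d^{s-1}$ blow-up of its normal derivative demands either sharp Ikeda-Watanabe-type estimates on the Poisson kernel of $\Omega$ (Bogdan-Jakubowski) or the fine boundary regularity of $s$-harmonic functions developed by Ros-Oton--Serra. Both ingredients are substantial pieces of machinery, and this is precisely why the authors content themselves with a reference to \cite{BJ0,BJ1,BJ} rather than reproducing the proof in detail.
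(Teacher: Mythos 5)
The paper offers no proof of this lemma; it simply points to \cite{BJ0,BJ1,BJ}, so your sketch should be compared with that literature rather than with an argument in the text, and on that footing it is correct in outline and correctly sourced. Two remarks. For \eqref{green1}, time-integration of the killed heat kernel is indeed the standard route; the sharp Chen--Kim--Song bound has $\sqrt{t}$ alone in the denominator rather than $\sqrt{t}\wedge|x-y|^{s}$, but your weaker variant still integrates (splitting at $t=\delta(x)^{2s}$ and $t=|x-y|^{2s}$) to the claimed estimate, so no harm is done. For \eqref{green2}, the two-regime split you propose is valid, but it is subsumed by the single gradient estimate of Bogdan--Kulczycki--Nowak in \cite{BJ0}: for any $u\ge 0$ on $\ren$ that is $s$-harmonic in an open set $D$, one has $|\nabla u(x)|\le C(N,s)\,u(x)/\delta_D(x)$ for $x\in D$. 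Applying this with $u=\mathcal{G}_s(\cdot,y)$ and $D=\Omega\setminus\{y\}$, for which $\delta_D(x)=\min\{\delta(x),|x-y|\}$, gives \eqref{green2} in a single stroke, without separately invoking the Caffarelli--Silvestre extension or an interior Harnack inequality. You correctly flag the boundary blow-up $\delta^{s-1}$ of the gradient as the delicate part; be aware, though, that differentiating the $\delta^s$ barrier is only a heuristic, and the rigorous justification in \cite{BJ0} goes through the Ikeda--Watanabe formula and Poisson kernel estimates that you allude to at the end.
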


\subsection{ A radial supersolution}
We will start by building  a radial  supersolution with an appropriate regularity.

We begin with the case $p_-(\l,s)<q<p_{+}(\lambda,s)<2s$. Let $w_1$ be the solution to the equation \eqref{soulrn} obtained in the first section. Recall that
$w_1(x)=\dfrac{A}{|x|^{\theta_0}}$ with $\theta_0=\frac{N-2s}{2}-\beta$ and

$$
(-\D)^s w_1(x)-\l \frac{w_1}{|x|^{2s}}=\frac{A(\gamma_\beta-\l)}{|x|^{\theta_0+2s}}= \dfrac {A^{p-1} |\theta_0|^p}{|x|^{\theta_0+2s}}.
$$
By the definition of {$\gamma_{\beta}$} given in \eqref{gamma}, it holds that $(\g_{\beta}-\l)>0$ if and only if $\theta_0\in (\mu(\l), \bar\mu(\l))$. It is clear that, in order to get
$$
\frac{A(\gamma_{\beta}-\l)}{|x|^{\theta_0+2s}}\ge |\n w_1|^p  \mbox{  in   }\O,
$$
we need that $\theta_0<p_{+}(\lambda,s)$.

Now, fix $\theta \in (\mu(\l), \bar\mu(\l))$ close to $\mu(\l)$ such that if we set
$w(x)= A |x|^{-\theta},\quad A>0$, then

 $$
(-\D)^s w(x)=\l \frac{w}{|x|^{2s}}+ \frac{C(A,\l)}{|x|^{2s+\theta}}.
$$

It is clear that $|\n w(x)|=\dfrac{A\, \theta}{|x|^{\theta+1}}$, hence
$\dfrac{C(A,\l)}{|x|^{2s+\theta}}\ge |\n w(x)|^p$ in a neighborhood of the origin if $\theta+2s>q(\theta+1)$. Thus $\theta< \frac{2s-p}{p-1}$.

Hence we can fix $\a>0$ such that $\theta<\frac{2s-\a}{\a-1}$ and $p_-(\l,s)<p<\a<p_{+}(\lambda,s)$.  From now on, we fix $\a$ such that the above construction holds.

Notice that, since $p<p_{+}(\lambda,s)$, then  $\frac{2s-p}{p-1}>\mu(\l)$. Also, since $p_-(\l,s)<p$, then
$\frac{2s-p}{p-1}<\bar\mu(\l)$.

Clearly, if $f\le \dfrac{1}{|x|^{2s+\theta}}$, then $w_1$ is a supersolution to problem \eqref{existencia} for $\mu<\mu^*$.

\

We analyze now some properties of this supersolution.

Recall that $\mathcal{G}_s$ is the Green kernel of $(-\Delta)^s$ and define
$$
K(y)=\io \dfrac{w^{\a-1}(x)\mathcal{G}_s(x,y)}{|x-y|^{\a}}dx,
$$
where $\a<2s$ to be chosen later. We claim that $K\in L^\infty(\O)$. To show the claim, we observe that

$$K(y)\le C\io \dfrac{1}{|x|^{\theta(\a-1)}|x-y|^{N-2s+\a}}dx.$$
Since $\theta<\frac{2s-\a}{\a-1}$ and $\a<2s$, it holds that $\frac{N}{2s-\a}<\frac{N}{\theta(\a-1)}$. Hence we get the existence of $\frac{N}{2s-\a}<\s<\frac{N}{\theta(\a-1)}$ such that $\dfrac{1}{|x|^{\theta(\a-1)}}\in L^\s(\O)$.

Using H\"older inequality, we have that
$$K(y)\le C\Big(\io \dfrac{1}{|x|^{\theta(\a-1)}})^\sigma dx\Big)^{\frac{1}{\s}}
\Big(\io \dfrac{1}{|x-y|^{\s'(N-2s+\a)}}dx\Big)^{\frac{1}{\s'}}.
$$
Since $\s'(N-2s+\a)<N$, then $K(y)\le C$ for all $y\in \O$ and the claim follows.

Consider now $\psi$ to be the unique solution to the problem
\begin{equation}\label{inter00} \left\{
\begin{array}{rcll}
(-\Delta)^s \psi &= & \dfrac{w^{\a-1}(x)}{\d^{\a}(x)}=\dfrac{{A^{\alpha-1}}}{|x|^{\theta(\a-1)}\d^{\a}(x)} & \text{   in }\Omega , \\  \psi &=& 0 & \hbox{  in } \mathbb{R}^N\setminus\Omega.
\end{array} \right.
\end{equation}
Since $\theta(\a-1)<2s$,  as in \cite{APN}, we  can prove that $\psi\in L^\infty(\O)$.

We are now in position to state the main existence result.
\begin{Theorem}\label{exit1}
Assume $f\in L^\infty(\Omega)$ and suppose that  $1<p<p_{+}(\lambda,s)<2s$. Then problem \eqref{existencia} has a
solution $u$ such that $u\in W^{1,p}_0(\O)$.
\end{Theorem}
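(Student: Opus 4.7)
The plan is to construct the solution as the limit of an iterative scheme controlled from above by the radial supersolution $\bar u(x)=A|x|^{-\theta}$ built in the preamble. For $A$ sufficiently large (and $\mu$ correspondingly small, consistent with the paper's blanket convention), $\bar u$ satisfies
\[
(-\Delta)^s\bar u\geq\lambda\frac{\bar u}{|x|^{2s}}+|\nabla\bar u|^p+\mu f\quad\text{in }\Omega,\qquad \bar u\geq 0\text{ in }\mathbb{R}^N\setminus\Omega.
\]
This rests on the preamble identity $(-\Delta)^s\bar u-\lambda\bar u/|x|^{2s}=C(A,\lambda)|x|^{-(2s+\theta)}$ together with the inequality $p(\theta+1)<2s+\theta$, possible exactly because $p<p_+(\lambda,s)$ and $\theta$ is taken in $(\mu(\lambda),(2s-p)/(p-1))$: the singular left side dominates $|\nabla\bar u|^p=A^p\theta^p|x|^{-p(\theta+1)}$ near the origin, while the bounded term $\mu f$ is absorbed by enlarging $A$.

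\textbf{Iteration and twin bounds.} Set $u_0\equiv 0$ and inductively let $u_{n+1}$ be the unique weak solution (Definition \ref{def1}) of
\[
(-\Delta)^s u_{n+1}=\lambda\frac{u_n}{|x|^{2s}}+|\nabla u_n|^p+\mu f\quad\text{in }\Omega,\qquad u_{n+1}=0\text{ in }\mathbb{R}^N\setminus\Omega,
\]
well-defined with $u_{n+1}\in W^{1,q}_0(\Omega)$ for every $q<p_*$ by Theorem \ref{key}, once the RHS is in $L^1(\Omega)$. The core of the proof is a simultaneous induction on two bounds: (a) $0\leq u_n\leq \bar u$ pointwise, and (b) $|\nabla u_n(y)|\leq\Phi(y)$ a.e.\ in $\Omega$, with $\Phi\in L^\infty(\Omega)$ independent of $n$. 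Given (a) and (b) at step $n$, the linear comparison principle applied at step $n+1$ with the supersolution property yields $u_{n+1}\leq \bar u$; the Green-function representation $u_{n+1}(y)=\int\mathcal{G}_s(x,y)g_n(x)\,dx$ together with estimate \eqref{green2} of Lemma \ref{estimmm} propagates $|\nabla u_{n+1}|\leq\Phi$, provided $\Phi$ is chosen to close the recursion.

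\textbf{Role of $K$ and $\psi$.} Closing the recursion is where the preamble constructions enter. Substituting $u_n\leq A|x|^{-\theta}$ and $|\nabla u_n|\leq\Phi$ into $g_n$ and using Young's inequality on the $|\nabla u_n|^p$ term with an exponent $\alpha\in(p,p_+(\lambda,s))$, the contribution to $|\nabla u_{n+1}(y)|$ is dominated by $\int\mathcal{G}_s(x,y)w^{\alpha-1}(x)|x-y|^{-\alpha}\,dx=K(y)$ and $\int\mathcal{G}_s(x,y)w^{\alpha-1}(x)\delta^{-\alpha}(x)\,dx=\psi(y)$, both bounded in $\Omega$ by the preamble. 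Choosing $\Phi(y):=C(K(y)+\|\psi\|_{L^\infty(\Omega)})$ with $C$ large enough closes the recursion.

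\textbf{Passage to the limit and main obstacle.} Bounds (a)--(b) render $g_n$ uniformly bounded in $L^1(\Omega)$, so Theorem \ref{key} yields a subsequence with $u_n\to u$ strongly in $W^{1,q}_0(\Omega)$ for every $q<p_*$ and $\nabla u_n\to\nabla u$ a.e.; equi-integrability of $|\nabla u_n|^p$ (from $|\nabla u_n|\leq\Phi\in L^\infty$) and Vitali's lemma upgrade this to $|\nabla u_n|^p\to|\nabla u|^p$ in $L^1(\Omega)$, while dominated convergence (using $u_n/|x|^{2s}\leq A|x|^{-(2s+\theta)}\in L^1(\Omega)$, since $2s+\theta<N$) handles the Hardy term. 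Passing to the limit in the weak formulation shows that $u$ solves \eqref{existencia} in the sense of Definition \ref{def2} with $u\in W^{1,p}_0(\Omega)$. The main obstacle is precisely the closure of the gradient recursion in step (b); the threshold $p<p_+(\lambda,s)$ is sharp for this closure, mirroring the non-existence Theorem \ref{non1}.
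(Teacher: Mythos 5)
The central step (b) of your scheme --- the uniform pointwise bound $|\nabla u_n(y)|\le\Phi(y)$ a.e.\ with $\Phi\in L^\infty(\Omega)$ --- cannot hold, and the preamble objects $K$ and $\psi$ do not produce it. Already at the second iterate the claim breaks: $u_1$ solves $(-\Delta)^s u_1=\mu f$ with $f\in L^\infty$, hence $u_1$ is bounded and $u_1(0)>0$; the next right-hand side therefore contains $\lambda u_1/|x|^{2s}\sim|x|^{-2s}$ near the origin, whose Riesz potential is of order $\log(1/|x|)$, so $|\nabla u_2|\sim|x|^{-1}$ near $0$. More structurally, the radial supersolution $\bar u=A|x|^{-\theta}$ has $|\nabla\bar u|\sim|x|^{-\theta-1}\notin L^\infty$ and the solution should inherit the same singularity; asking for a Lipschitz a priori bound contradicts the very nature of the problem. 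The quantities $K(y)=\int_\Omega w^{\a-1}(x)\mathcal{G}_s(x,y)|x-y|^{-\a}dx$ and $\psi$ in the paper are bounded, but they are \emph{not} upper bounds for $|\nabla u_{n+1}(y)|$: they arise as the inner integrals after you write $|\nabla u_n(x)|^\alpha\le\big(\int h^\a(x,y)\mathcal{G}_s(x,y)g_n(y)\,dy\big)w^{\a-1}(x)$ and then integrate in $x$, i.e.\ they control the \emph{integral} $\int_\Omega|\nabla u_n|^\a\,dx$ (an $L^\a$ bound with $p<\a<2s$), not a pointwise bound. That $L^\a$ bound is the paper's actual closure mechanism, and it is precisely here that $p<p_+(\l,s)$ enters, via the exponent conditions on $\theta$ and $\a$ that make $K,\psi$ finite.

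There is a second, related gap. Your induction step (a) uses the linear comparison principle to get $u_{n+1}\le\bar u$, which requires $\bar u$ to be a supersolution of the \emph{linear} problem $(-\Delta)^s v = \l u_n/|x|^{2s}+|\nabla u_n|^p+\mu f$, i.e.\ you need $|\nabla u_n|^p\le C(A,\l)|x|^{-(2s+\theta)}-\l u_n/|x|^{2s}-\mu f$ a.e. The pointwise bound $u_n\le\bar u$ alone does not give a gradient inequality, so (a) genuinely needs (b); but since (b) is false, the induction does not close. The paper avoids this entirely by using the implicit, regularized problems $(-\Delta)^s u_n = \frac{|\nabla u_n|^p}{1+\frac1n|\nabla u_n|^p}+\l\frac{u_n}{|x|^{2s}}+\mu f$, for which the \emph{nonlinear} comparison principle (Theorem \ref{compa2}) with the same supersolution directly yields the monotone, $w$-dominated sequence $\{u_n\}$ without any a priori gradient bound; the $L^\a$ gradient estimate is then obtained a posteriori from the Green representation and the $K,\psi$ computations, and compactness plus Vitali completes the passage to the limit. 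Your final convergence argument would be fine once a genuine $L^\a$ (not $L^\infty$) estimate is in hand, but that requires replacing the Picard iteration with the regularized implicit scheme and replacing the pointwise recursion by the integral estimate $\int_\Omega|\nabla u_n|^\a dx\le C_1\int_\Omega|\nabla u_n|^p dx+C_2$, closed by H\"older since $p<\a$.
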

\begin{proof}
 We divide the proof into two parts according to the value of $p$.

{\bf First case: $p_{-}(\lambda,s)<p<p_{+}(\lambda,s)$.}

Let $w$ be the supersolution obtained in the previous computation, then $w\in W^{1,\a}(\O)$. Consider $u_n$ to be the unique solution to the approximating problem
\begin{equation}\label{aprox1} \left\{
\begin{array}{rcll}
(-\Delta)^s u_n &= & \dfrac{|\n u_n|^p}{1+\frac 1n |\n u_n|^p} +\lambda \dfrac{u_n}{|x|^{2s}}+\mu f & \text{   in }\Omega , \\  u_n &=& 0 &\hbox{  in }\mathbb{R}^N\setminus\Omega.
\end{array} \right.
\end{equation}
By the comparison principle in Theorem \ref{compa2}, it follows that $u_n\le u_{n+1}\le w$ for all $n$. Since $w\in L^{p^*}(\O)$, then there exists $u$ such that $u_n\uparrow u$
strongly in $L^{p^*}(\O)$. Define
$$
g_n(x)=\dfrac{|\n u_n|^p}{1+\frac 1n |\n u_n|^p}+\l \frac{u_n}{|x|^{2s}} +\mu f,
$$
since $u_n\le w$, using the positive first eigenfunction of the fractional laplacian $\varphi_1$ as test function in \eqref{aprox1} and using the fact that $\varphi_1\backsimeq \delta^s$, it holds that
$$
\io g_n(x)\d^s(x) dx \le \l_1\io u_n \varphi_1\le C(\O)\io w \d^s(x) \,dx\le C\mbox{  for all   }n.
$$
We claim that the sequence $\{u_n\}_n$ is bounded in $W^{1,\a}_0(\O)$ where $\a<2s$ is chosen as in the definition of the supersolution.

We follow  the same ideas as in  \cite{APN}. We
have that
$$
u_n(x)=\io \mathcal{G}_s(x,y) g_n(y)dy.
$$
Hence
$$
|\n u_n(x)|\le\io |\n_x \mathcal{G}_s(x,y)|g_n(y)dy.
$$
Fix $1<\a<2s$ and define $h(x,y)=\max\bigg\{\dfrac{1}{|x-y|},\dfrac{1}{\d(x)}\bigg\}$. Then,
\begin{eqnarray*}
|\n u_n(x)|^{\a} &\le & \Big(\io |\n_x \mathcal{G}_s(x,y)|g_n(y)dy\Big)^{\a}{ \le} \Big(\io h(x,y) \mathcal{G}_s(x,y)g_n(y)dy\Big)^{\a}\\& \le & \dyle \Big(\io
(h(x,y))^{\a} \mathcal{G}_s(x,y)g_n(y)dy\Big)\Big(\io \mathcal{G}_s(x,y)g_n(y)dy\Big)^{\a-1}\\ &\le & \dyle \Big(\io (h^{\a}(x,y)
\mathcal{G}_s(x,y)g_n(y)dy\Big)u^{\a-1}_n(x)\\ &\le & \dyle \io \Big(h^{\a}(x,y) \mathcal{G}_s(x,y)g_n(y)dy\Big)w^{\a-1}(x)\\ &\le & \dyle \io \Big(h^{\a}(x,y)
\mathcal{G}_s(x,y)|\n u_n(y)|^pdy\Big)w^{\a-1}(x)+
\l \io \Big(h^{\a}(x,y) \mathcal{G}_s(x,y)\frac{u_n(y)}{|y|^{2s}}dy\Big)w^{\a-1}(x)\\
&+ & \mu \dyle \io \Big(h^{\a}(x,y) \mathcal{G}_s(x,y)f(y)dy\Big)w^{\a-1}(x).
\end{eqnarray*}
Thus
\begin{eqnarray*}
\dyle\io |\n u_n|^{\a}dx & \le & \dyle \io|\n u_n(y)|^p \Big(\io h^{\a}(x,y) \mathcal{G}_s(x,y)w^{\a-1}(x)\Big)dy\\
& + & \dyle \l \io \frac{w(y)}{|y|^{2s}}\Big(\io h^{\a}(x,y)\mathcal{G}_s(x,y)w^{\a-1}(x)dx\big)dy\\
& + & \dyle \mu \io f(y)\Big(\io
h^{\a}(x,y)\mathcal{G}_s(x,y)w^{\a-1}(x)dx\big)dy\\ &\equiv & J_1+J_2+J_3.
\end{eqnarray*}
Observing that $h^\alpha(x,y)\le \dfrac{1}{|x-y|^\alpha}+\dfrac{1}{\d(x)^\alpha}$, then
\begin{eqnarray*}
J_1 & \le & \dyle \io|\n u_n(y)|^p \Big(\io \dfrac{w^{\a-1}(x)\mathcal{G}_s(x,y)}{|x-y|^{\a}}dx\Big)dy+
\io|\n u_n(y)|^p \Big(\io \dfrac{w^{\a-1}(x)\mathcal{G}_s(x,y)}{\d^{\a}(x)}dx\Big)dy\\
& \le & \dyle \io|\n u_n(y)|^p \Big(\io \dfrac{w^{\a-1}(x)\mathcal{G}_s(x,y)}{|x-y|^{\a}}dx\Big)dy+
\io|\n u_n(y)|^p \psi(y)dy.
\end{eqnarray*}
By using the hypothesis on $w$, we reach that
\begin{eqnarray*}
J_1 &\le & C\io|\n u_n(y)|^p dy+\Big(\io|\n u_n(y)|^\a dy\Big)^{\frac{p}{\a}}\Big(\io \psi^{\frac{\a}{p-\a}}dy\Big)^{\frac{\a-p}{\a}}\\
&\le & \dyle C_1\io|\n u_n(y)|^p dy+C_2\Big(\io|\n u_n(y)|^\a dy\Big)^{\frac{p}{\a}}.
\end{eqnarray*}

We deal now with $J_2$.

\begin{eqnarray*}
J_2 & \le & \dyle \io \frac{w(y)}{|y|^{2s}} \Big(\io \dfrac{w^{\a-1}(x)\mathcal{G}_s(x,y)}{|x-y|^{\a}}dx\Big)dy+
\io \frac{w(y)}{|y|^{2s}} \Big(\io \dfrac{w^{\a-1}(x)\mathcal{G}_s(x,y)}{\d^{\a}(x)}dx\Big)dy\\
& \le & \dyle \io \frac{w(y)}{|y|^{2s}}\Big(\io \dfrac{w^{\a-1}(x)\mathcal{G}_s(x,y)}{|x-y|^{\a}}dx\Big)dy+
\io \frac{w(y)}{|y|^{2s}}\psi(y)dy\le C\dyle \io \frac{w(y)}{|y|^{2s}} dy\le C.
\end{eqnarray*}

For $J_3$, we have
\begin{eqnarray*}
J_3 & \le & \dyle \io f(y)\Big(\io \dfrac{w^{\a-1}(x)\mathcal{G}_s(x,y)}{|x-y|^{\a}}dx\Big)dy+
\io f(y)\psi(y)dy.
\end{eqnarray*}
Hence,
\begin{eqnarray*}
J_3 &\le & C\io f(y)dy+\Big(\io f^{\frac{N}{p'(2s-1)}}dy\Big)^{\frac{p'(2s-1)}{N}}\Big(\io \psi^{\frac{N}{N-p'(2s-1)}}
dy\Big)^{\frac{N-p'(2s-1)}{N}}\\
&\le & C.
\end{eqnarray*}
Therefore we conclude that
$$
\io|\n u_n(x)|^\a dx\le C_1\io|\n u_n(x)|^pdx +C_2.
$$
Choosing $\a>q$ and by H\"older inequality,  we obtain that
$$\io|\n u_n(x)|^\a dx\le C \hbox{ for all  } n.$$

As a consequence we get that the sequence $\{g_n\}_n$ is bounded in $L^{1+\e}(\O)$ for some $\e>0$. By the compactness result in Proposition \ref{key}, we obtain that, up to a subsequence, $u_n\to u$ strongly in $W^{1,r}_0(\Omega)$ for all $r<p_*$ and $|\n u_n|\to |\n u|$ a.e.  in $\O$. Hence by Vitali lemma we reach that $u_n\to u$
strongly in $W^{1,\a}_0(\Omega)$ with $\a$ previously chosen. Since $p<\a$, then
$$
\dfrac{|\n u_n|^p}{1+\frac 1n |\n u_n|^p}\to |\n u|^p\mbox{  strongly  in   }L^1(\O).
$$
Hence, $u$ is a solution to \eqref{existencia} with $u\in W^{1,\a}_0(\Omega)$.

{\bf Second case: $1<p\le p_{-}(\lambda,s)$.}
We begin by proving that problem \eqref{soulrn} has a supersolution in a small ball $B_r(0)$ that enjoys the same regularity properties as $w$.

Fix $p_0\in (p_{-}(\lambda,s),p_{+}(\lambda,s))$ and consider $u_{p_0}$ the solution to problem \eqref{existencia} obtained in the first case with $f\equiv \frac{1}{|x|^{2s}}$. Since $p_0>p$, then for all $\e>0$ there exists $C(\e)>0$ such that for all $\s\ge 0$,
$\s^{p_0}\ge C(\e)\s^p-\e$. Thus
$$
|\n u_{p_0}|^{p_0}\ge C(\e)|\n u_{p_0}|^p-\e.
$$
Therefore,
$$
(-\Delta)^s u_{p_0}\ge \lambda \dfrac{u_{p_0}}{|x|^{2s}}+C(\e)|\n u_{p_0}|^p-\e+\frac{\mu}{|x|^{2s}} \mbox{  in  }\O.
$$
Let fix $r>0$ small enough such that $\dfrac{\mu}{|x|^{2s}}-\e\ge \dfrac{\mu_0}{|x|^{2s}}$ in $B_r(0)$. Hence we conclude that $u_{p_0}$ satisfies
$$
(-\Delta)^s u_{p_0}\ge \lambda \dfrac{u_{p_0}}{|x|^{2s}}+C(\e)|\n u_{p_0}|^p+\frac{\mu_0}{|x|^{2s}} \mbox{  in  }B_r(0).
$$
Setting $u_p=C(\e) u_{p_0}$, we reach that
$$
(-\Delta)^s u_{p}\ge \lambda \dfrac{u_{p}}{|x|^{2s}}+|\n u_{p}|^p+\frac{\mu_1}{|x|^{2s}} \mbox{  in  }B_r(0).
$$
Thus $u_p$ is a supersolution to problem \eqref{existencia} with the same regularity properties as $w$. Hence the existence result follows using the same approach as in the first case.

Notice that if $u$ is a supersolution in $B_r(0)$, then for $x\in B_R(0)$ with $R>r$ and by setting $\hat{u}(x)=u(\frac{r}{R}x)$, then there exists a constant $C:=C(R,r,p)$ such that $\check{u}:=C\hat{u}$ is a supersolution to \eqref{existencia} in $B_R(0)$ with $\mu:=\hat{\mu}$.

Now we consider the case of general domain $\O$. Let $R>1$ be such that $\O\subset \subset B_R(0)$. It is clear that $\check{u}$ is a supersolution to \eqref{existencia} that has the same properties of $w$. Hence we conclude the proof of Theorem \ref{exit1}.
\end{proof}

\begin{Corollary} Assume that $p<p_{+}(\lambda,s)$ and that $f\le \dfrac{C}{|x|^{2s}}$ with $f\gneq 0$. Then there exists $\mu^*>0$ such that for all $\mu<\mu^*$, problem \eqref{existencia} has a positive solution $u$ such that $u\in W^{1,\a}_0(\Omega)$ for all $\a<2s$.
\end{Corollary}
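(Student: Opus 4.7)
\begin{pf}[Plan of proof]
The plan is to reuse the construction of Theorem \ref{exit1} verbatim, verifying that the hypothesis $f\le C|x|^{-2s}$ is strong enough to furnish both a radial supersolution (for $\mu$ small) and the $L^{N/(p'(2s-1))}$ integrability needed in the estimate of the term $J_3$ in the proof of Theorem \ref{exit1}.

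First I would treat the range $p_-(\l,s)<p<p_+(\l,s)$. Since $p<p_+(\l,s)$, I choose $\theta\in(\mu(\l),\bar\mu(\l))$ close to $\mu(\l)$ with $\theta<\frac{2s-p}{p-1}$, exactly as in the paragraph preceding Theorem \ref{exit1}, and set $w(x)=A|x|^{-\theta}$. Then
$$
(-\D)^s w-\l\frac{w}{|x|^{2s}}=\frac{A(\gamma_\beta-\l)}{|x|^{\theta+2s}},\qquad |\n w|^p=\frac{(A\theta)^p}{|x|^{p(\theta+1)}},
$$
with $\gamma_\beta-\l>0$ and $p(\theta+1)<\theta+2s$. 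Assuming, without loss of generality, $\O\subset B_R(0)$, the bound $f\le C|x|^{-2s}$ yields $\mu f\le \mu C R^{\theta}|x|^{-(\theta+2s)}$, while $|\n w|^p\le (A\theta)^p R^{(\theta+2s)-p(\theta+1)}|x|^{-(\theta+2s)}$ on $\O$. Hence the supersolution inequality
$$
(-\D)^s w\ge \l\frac{w}{|x|^{2s}}+|\n w|^p+\mu f\qquad \hbox{in }\O
$$
reduces to the numerical inequality
$$
A(\gamma_\beta-\l)\ge (A\theta)^p R^{(\theta+2s)-p(\theta+1)}+\mu C R^{\theta},
$$
which is achieved by first picking $A$ large enough so that the first two terms satisfy $A(\gamma_\beta-\l)\ge 2(A\theta)^p R^{(\theta+2s)-p(\theta+1)}$ (possible because the exponent $1$ on the left exceeds the exponent $p$ on the right and we can rescale), and then choosing $\mu^*$ so that $\mu^* C R^\theta\le \tfrac12 A(\gamma_\beta-\l)$.

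Next I would verify that $f\in L^{N/(p'(2s-1))}(\O)$: the bound $f\le C|x|^{-2s}$ gives membership provided $\frac{2sN}{p'(2s-1)}<N$, i.e.\ $p<2s$, which holds since $p<p_+(\l,s)<2s$. This is precisely the integrability used in the estimate of $J_3$ in the proof of Theorem \ref{exit1}; the estimates of $J_1$ and $J_2$ do not involve $f$. Consequently, the whole proof of the first case of Theorem \ref{exit1} applies verbatim: I solve the truncated problems \eqref{aprox1}, use Theorem \ref{compa2} to get $u_n\le u_{n+1}\le w$, extract a limit $u\in W^{1,\a}_0(\O)$ for the chosen $\a\in(p,2s)$ via the compactness result of Theorem \ref{key}, and pass to the limit in the gradient term by Vitali's lemma. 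Since the argument works for any $\a<2s$ satisfying the constraints, and these constraints become empty as $\a\uparrow 2s$ by choosing $\theta$ closer to $\mu(\l)$, the solution belongs to $W^{1,\a}_0(\O)$ for every $\a<2s$.

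For the remaining range $1<p\le p_-(\l,s)$, I would use the absorption trick already laid out in the second case of Theorem \ref{exit1}: fix $p_0\in(p_-(\l,s),p_+(\l,s))$, apply the first case to solve the problem with exponent $p_0$ and datum $C|x|^{-2s}$, then use Young's inequality $\sigma^{p_0}\ge C(\e)\sigma^p-\e$ to deduce that a rescaling of the $p_0$-solution is a supersolution for the $p$-problem on a small ball $B_r(0)$ with datum of the form $\frac{\mu_0}{|x|^{2s}}$; the passage from the ball to $\O$ is by the scaling argument at the end of Theorem \ref{exit1}. The only delicate point—and the one I would spend most care on—is the first step, namely guaranteeing that $\mu^*$ can be chosen \emph{independently of the particular $\theta$} so that the rescaling procedure does not make $\mu^*$ collapse; this is handled by fixing $\theta$ (and thus $A$) once and for all, and taking $\mu^*$ as the explicit constant produced in the inequality above.
\end{pf}
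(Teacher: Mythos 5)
Your overall plan — reuse the first‐case construction of Theorem \ref{exit1}, checking that $f\le C|x|^{-2s}$ still allows the radial supersolution for $\mu$ small, and that it supplies the $L^{N/(p'(2s-1))}$ integrability needed for the estimate of $J_3$ — is the right one and matches the way the paper intends the Corollary to follow. However, there are two points that need repair, one cosmetic and one genuine.

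First, the cosmetic one. In the numerical inequality
$$A(\gamma_\beta-\lambda)\ \ge\ (A\theta)^p R^{(\theta+2s)-p(\theta+1)}+\mu C R^{\theta},$$
you write ``first picking $A$ large enough \dots (possible because the exponent $1$ on the left exceeds the exponent $p$ on the right).'' This is backwards: $p>1$, so for large $A$ the term $A^p$ dominates and the inequality fails. You must take $A$ \emph{small}, specifically $A\le\bigl(\tfrac{\gamma_\beta-\lambda}{2\theta^p R^{(\theta+2s)-p(\theta+1)}}\bigr)^{1/(p-1)}$, and then $\mu^*$ is the resulting (small) constant. Indeed $\mu^*$ cannot be made arbitrarily large, consistently with the non-existence result for $\mu$ large in Theorem \ref{non21}, so the assertion that one can ``rescale'' to enlarge $A$ is not just a slip of the pen but would contradict the structure of the problem.

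Second, and more seriously, the claim that ``these constraints become empty as $\alpha\uparrow 2s$ by choosing $\theta$ closer to $\mu(\lambda)$'' does not hold. The construction in Theorem \ref{exit1} simultaneously requires $\theta\in(\mu(\lambda),\bar\mu(\lambda))$ (so that the supersolution ansatz gives $\gamma_\theta-\lambda>0$) and $\theta<\frac{2s-\alpha}{\alpha-1}$ (so that the kernel $K$ is bounded). The second constraint with the lower bound $\theta>\mu(\lambda)$ forces
$$\mu(\lambda)<\frac{2s-\alpha}{\alpha-1}\quad\Longleftrightarrow\quad \alpha<\frac{\mu(\lambda)+2s}{\mu(\lambda)+1}=p_+(\lambda,s),$$
so the admissible range of $\alpha$ \emph{stops} at $p_+(\lambda,s)<2s$, and in fact as $\alpha\uparrow 2s$ the quantity $\frac{2s-\alpha}{\alpha-1}$ tends to $0<\mu(\lambda)$, making the required $\theta$ non-existent rather than unconstrained. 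As written, your argument therefore only produces $u\in W^{1,\alpha}_0(\Omega)$ for all $\alpha<p_+(\lambda,s)$, not for all $\alpha<2s$. If you want the full range up to $2s$ one would need a genuinely finer estimate of $J_1$ that does not rely on $K\in L^\infty(\Omega)$ (for instance, exploiting the blow-up rate of $K(y)$ as $y\to 0$ and the decay of $|\nabla u_n|^p$ there); you should flag that this extra step is needed rather than asserting that the constraints vanish in the limit.
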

\begin{remark}
Under the extra assumption $p<p_*=\frac{N}{N-2s+1}$, we are able to prove the existence of a solution for all $f\in L^1(\O)$ {that satisfies a suitable integrability condition near the origin}. More precisely, fixed $\l<\Lambda_{N,s}$, the problem
\begin{equation}\label{hardyff} \left\{
\begin{array}{rcll}
(-\Delta)^s v &= & \lambda \dfrac{v}{|x|^{2s}}+f & \text{   in }\Omega , \\  \phi &=& 0 &\hbox{  in }\mathbb{R}^N\setminus\Omega,
\end{array} \right.
\end{equation}
has a weak solution if and only if $\dint_\Omega f|x|^{-\mu(\l)}dx<\infty$ (see \cite{AMPP}).

Moreover we have
$$\io v |x|^{-\mu(\l)} dx<\infty,$$
and
\begin{equation}\label{fix2}
||v||_{L^1(|x|^{-\mu(\l)}dx,\O)}+\bigg\|\frac{v}{|x|^{2s}}\bigg\|_{L^1(\O)}+ \bigg\|\n v|\bigg\|_{L^\s(\O)}\le C(\O,\l)||f||_{L^1(|x|^{-\mu(\l)}dx,\O)}, \mbox{  for all  }\s<\frac{N}{N-2s+1}.
\end{equation}
We refer to \cite{AMPP} for the proof.
\end{remark}
Suppose now that $f\in L^1(|x|^{-\mu(\l)-a_0}dx,\O)$,
hence there exists $\l_1\in (\l,\Lambda_{N,s})$ such that $\mu(\l_1)=\mu(\l)+a_0$. Define $\psi$ to be the unique solution to problem
\begin{equation}\label{hardyffa} \left\{
\begin{array}{rcll}
(-\Delta)^s \psi &= & \l_1 \dfrac{\psi}{|x|^{2s}}+1 & \text{   in }\Omega , \\  \psi &=& 0 &\hbox{  in }\mathbb{R}^N\setminus\Omega,
\end{array} \right.
\end{equation}
then $\psi\simeq |x|^{-\mu(\l)-a_0}$ near the origin. It is clear also that $\psi\in L^\infty(\Omega\backslash B_r(0))$.

Using $\psi$ as a test function in problem \eqref{hardyff}, it holds that
$$
(\l_1-\l)\io \frac{v \psi }{|x|^{2s}}dx \le \io f\psi dx.
$$
Hence
\begin{equation}\label{regumas}
\io \frac{v}{|x|^{2s+\mu(\l)}}dx \le C(\O\,\l,a_0)||f||_{L^1(|x|^{-\mu(\l)-a_0}dx,\O)}.
\end{equation}
The next proposition will be the key in order to show the existence of a solution to problem \eqref{existencia} under the above general hypothesis on $f$.
\begin{Proposition}\label{main-exi}
Assume that $f\in L^1(|x|^{-\mu(\l)-a_0}dx,\O)$ for some $a_0>0$ and $v$ to be the unique weak solution to problem \eqref{hardyff}, then
\begin{equation}\label{fix3}
\bigg\|\n v\bigg\|_{L^\a(|x|^{-\mu(\l)}dx,\O)}\le C(\O,\l,a_0)||f||_{L^1(|x|^{-\mu(\l)-a_0}dx,\O)}\mbox{  for all  }\a<\frac{N}{N-2s+1}.
\end{equation}
\end{Proposition}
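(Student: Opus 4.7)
The approach is to view $v$ as the solution of $(-\Delta)^s v=g$ with $g=\lambda v|x|^{-2s}+f$, first establish a weighted $L^1$ bound on $g$, and then transfer it to a weighted $L^\alpha$ bound on $\nabla v$ through the Green function representation of Lemma \ref{estimmm}. The first step is immediate: since $\Omega$ is bounded, $|x|^{-\mu(\lambda)}\le C|x|^{-\mu(\lambda)-a_0}$, so that $\|f\|_{L^1(|x|^{-\mu(\lambda)}dx)}\le C\|f\|_{L^1(|x|^{-\mu(\lambda)-a_0}dx)}$; for the Hardy-like term I would invoke exactly the estimate \eqref{regumas}, previously obtained by testing \eqref{hardyff} against the solution $\psi$ of the auxiliary Hardy problem with parameter $\lambda_1>\lambda$ chosen so that $\mu(\lambda_1)=\mu(\lambda)+a_0$. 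Together this yields
\[
\|g\|_{L^1(|x|^{-\mu(\lambda)}dx)}\le C(\Omega,\lambda,a_0)\,\|f\|_{L^1(|x|^{-\mu(\lambda)-a_0}dx)}.
\]

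For the second step I would use $v(x)=\int_\Omega \mathcal{G}_s(x,y)g(y)\,dy$ and Lemma \ref{estimmm} to get $|\nabla v(x)|\le C\int h(x,y)\mathcal{G}_s(x,y)g(y)\,dy$ with $h(x,y)=\max\{|x-y|^{-1},\delta(x)^{-1}\}$. The key device is to split the integrand as $(h\mathcal{G}_s|y|^{\mu(\lambda)})(g|y|^{-\mu(\lambda)})$ and apply Jensen's inequality to $t\mapsto t^\alpha$ against the probability measure $g(y)|y|^{-\mu(\lambda)}dy/\|g\|_{L^1(|y|^{-\mu(\lambda)})}$. Integrating against $|x|^{-\mu(\lambda)}dx$ and applying Fubini, this reduces the proposition to the kernel bound
\[
\Xi(y):=\int_\Omega h^\alpha(x,y)\,\mathcal{G}_s^\alpha(x,y)\,|x|^{-\mu(\lambda)}\,dx\le C|y|^{-\mu(\lambda)\alpha},\qquad y\in\Omega.
\]

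Step three, the kernel estimate, is where I expect the main difficulty. Using $h^\alpha\le C(|x-y|^{-\alpha}+\delta(x)^{-\alpha})$ I would split $\Xi=\Xi_1+\Xi_2$. For $\Xi_1$, combining with $\mathcal{G}_s(x,y)\le C|x-y|^{2s-N}$ produces the Riesz-type integral $\int|x-y|^{-\alpha(N-2s+1)}|x|^{-\mu(\lambda)}\,dx$; the rescaling $x=|y|\xi$ and the standard partition of $\Omega$ into $\{|x|<|y|/2\}$, $\{|x-y|<|y|/2\}$ and their complement yield the bound $C|y|^{-\mu(\lambda)\alpha}$, with the hypothesis $\alpha<p_*=N/(N-2s+1)$ providing integrability in all regimes of $\alpha(N-2s+1)+\mu(\lambda)$ relative to $N$. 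For $\Xi_2$ I would use the sharper bound $\mathcal{G}_s(x,y)\le C\delta^s(x)|x-y|^{s-N}$, which converts $\delta(x)^{-\alpha}$ into $\delta(x)^{\alpha(s-1)}$; this is locally integrable near $\partial\Omega$ because $s>\tfrac12$ and $\alpha<p_*<2$ give $\alpha(s-1)>-1$, and an analogous scaling argument completes the proof. Piecing the three steps together,
\[
\|\nabla v\|_{L^\alpha(|x|^{-\mu(\lambda)}dx,\Omega)}\le C\,\|g\|_{L^1(|x|^{-\mu(\lambda)}dx)}\le C(\Omega,\lambda,a_0)\,\|f\|_{L^1(|x|^{-\mu(\lambda)-a_0}dx,\Omega)},
\]
and a routine truncation $g_n=T_n(g)$ with Fatou's lemma at the end legitimizes every manipulation.
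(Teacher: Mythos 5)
Your proof is correct, and it takes a genuinely different route to the weighted gradient estimate than the one in the paper, so a comparison is worthwhile.

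Both arguments start from the representation formula $v(x)=\int_\Omega \mathcal{G}_s(x,y)g(y)\,dy$ with $g=\lambda v|x|^{-2s}+f$, the gradient bound $|\nabla v(x)|\le C\int h(x,y)\mathcal{G}_s(x,y)g(y)\,dy$, and the a priori estimate \eqref{regumas}. The divergence is in how the power $\alpha$ is distributed. The paper writes
$|\nabla v(x)|^\alpha \le \big(\int h^\alpha\mathcal{G}_s\,g\,dy\big)\big(\int\mathcal{G}_s\,g\,dy\big)^{\alpha-1}
=\big(\int h^\alpha\mathcal{G}_s\,g\,dy\big)\,v^{\alpha-1}(x)$,
keeping $\mathcal{G}_s$ to the first power but introducing the factor $v^{\alpha-1}$; after Fubini this forces one to estimate $\int_{B_r(0)} h^\alpha\mathcal{G}_s\,v^{\alpha-1}|x|^{-\mu(\lambda)}\,dx$, which the paper handles by a further H\"older step using $v\in L^{\sigma_0}$ for $\sigma_0$ close to $\tfrac{N}{N-2s}$, together with the dyadic case split $\{|x|\gtrless \tfrac12|y|\}$ (the four integrals $J_{11},J_{12},J_{21},J_{22}$). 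Your Jensen step with the probability measure $g(y)|y|^{-\mu(\lambda)}\,dy/\|g\|$ eliminates the $v^{\alpha-1}$ factor entirely, at the cost of raising the Green kernel to the power $\alpha$; the whole problem collapses to the single, $v$-free kernel inequality $\Xi(y)\le C|y|^{-\mu(\lambda)\alpha}$. That inequality does hold: setting $\gamma=\alpha(N-2s+1)<N$, one checks $\int_\Omega|x-y|^{-\gamma}|x|^{-\mu(\lambda)}\,dx\lesssim 1+|y|^{N-\gamma-\mu(\lambda)}$, and the strict inequality $N-\gamma-\mu(\lambda)>-\mu(\lambda)\alpha$ (equivalent to $N-\alpha(N-2s+1)>\mu(\lambda)(1-\alpha)$, which is automatic because the left side is positive and the right side is negative for $\alpha>1$) gives the desired power of $|y|$; the boundary piece $\Xi_2$ is handled exactly as you say, via $\mathcal{G}_s\le C\delta^s(x)|x-y|^{s-N}$, $\alpha(s-1)>-1$ and $\alpha(N-s)<N$. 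Your approach is more streamlined — one kernel estimate in place of four — and does not invoke the $L^{\sigma_0}$-integrability of $v$; the price is a slightly more delicate potential estimate because $\mathcal{G}_s$ enters with the exponent $\alpha$. Both are correct, and both reduce to the same arithmetic on the critical exponent $\alpha<p_*$.
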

\begin{proof}
Notice that
$$
\bigg\|\n v\bigg\|_{L^\a(\O)}\le C(\O,\l,a_0)||f||_{L^1(|x|^{-\mu(\l)-a_0}dx,\O)}\mbox{  for all  }\a<\frac{N}{N-2s+1}.
$$
Hence to prove the claim we have just to show that
$$
\int_{B_r(0)}|\n v|^\a|x|^{-\mu(\l)} dx\le C(\O,\l,a_0)||f||_{L^1(|x|^{-\mu(\l)-a_0}dx,\O)} \mbox{  for all  }\a<\frac{N}{N-2s+1}.
$$
We set $g(x):=\lambda \dfrac{v}{|x|^{2s}}+\mu f$, then $v(x)=\io\mathcal{G}_s(x,y)g(y)dy$. Hence
$$
|\n v(x)|\le\io |\n_x \mathcal{G}_s(x,y)|g(y)dy.
$$
Fix $1<\a<p_*=\frac{N}{N-2s+1}$ and define $h(x,y)=\max\bigg\{\dfrac{1}{|x-y|},\dfrac{1}{\d(x)}\bigg\}$. Then,
\begin{eqnarray*}
|\n v(x)|^{\a} &\le & \Big(\io |\n_x \mathcal{G}_s(x,y)|g(y)dy\Big)^{\a}{ \le} \Big(\io h(x,y) \mathcal{G}_s(x,y)g_n(y)dy\Big)^{\a}\\& \le & \dyle \Big(\io
(h(x,y))^{\a} \mathcal{G}_s(x,y)g(y)dy\Big)\Big(\io \mathcal{G}_s(x,y)g(y)dy\Big)^{\a-1}\\ &\le & \dyle \io \Big(h^{\a}(x,y) \mathcal{G}_s(x,y)g(y)dy\Big)v^{\a-1}(x)\\ &\le & \dyle \l \io \Big(h^{\a}(x,y) \mathcal{G}_s(x,y)\frac{v(y)}{|y|^{2s}}dy\Big)v^{\a-1}(x)+\mu \dyle \io \Big(h^{\a}(x,y) \mathcal{G}_s(x,y)f(y)dy\Big)v^{\a-1}(x).
\end{eqnarray*}
Thus
\begin{eqnarray*}
\dyle\int_{B_r(0)} |\n v|^{\a} |x|^{-\mu(\l)} dx & \le & \l \io \frac{v(y)}{|y|^{2s}}\Big(\int_{B_r(0)} h^{\a}(x,y)\mathcal{G}_s(x,y)v^{\a-1}(x)|x|^{-\mu(\l)} dx\big)dy\\
& + & \dyle \mu \io f(y)\Big(\int_{B_r(0)}
h^{\a}(x,y)\mathcal{G}_s(x,y)v^{\a-1}(x)|x|^{-\mu(\l)} dx\big)dy\\ &\equiv & J_1+J_2.
\end{eqnarray*}
Recall that $h(x,y)=\max\{\dfrac{1}{|x-y|},\dfrac{1}{\d(x)}\}$. then for all $x\in B_r(0)\subset\subset \O$, we have
$$
\dfrac{C_1(\O,B_r(0))}{|x-y|}\le h(x,y)\le \dfrac{C_2(\O,B_r(0))}{|x-y|}.$$
Let us begin by estimating $J_1$.
Recall that, by \eqref{regumas}, we have
$$
\io \frac{v(y)}{|y|^{2s+\mu(\l)}}dy \le C \io \frac{f(y)}{|y|^{\mu(\l)+a_0}}dy.
$$
Therefore we obtain that
\begin{eqnarray*}
J_1 & \le & \dyle C_2\io \frac{v(y)}{|y|^{2s}} \Big(\int_{B_r(0)} \dfrac{v^{\a-1}(x)\mathcal{G}_s(x,y)}{|x|^{\mu(\l)}|x-y|^{\a}}dx\Big)dy\\&\le& C_2\io \frac{v(y)}{|y|^{2s}} \Big(\int_{B_r(0)} \dfrac{v^{\a-1}(x)}{|x|^{\mu(\l)}|x-y|^{N-(2s-\a)}}dx\Big)dy\\
& \le & \dyle C\io \frac{v(y)}{|y|^{2s}}\Big(\int_{B_r(0)\cap \{|x|\ge \frac 12 |y|\}} \dfrac{v^{\a-1}(x)}{|x|^{\mu(\l)}|x-y|^{N-(2s-\a)}}dx\Big)dy\\
&+ & \dyle C\io \frac{v(y)}{|y|^{2s}}\Big(\int_{B_r(0)\cap \{|x|<\frac 12 |y|\}} \dfrac{v^{\a-1}(x)}{|x|^{\mu(\l)}|x-y|^{N-(2s-\a)}}dx\Big)dy\\
&\le & J_{11}+J_{12}.
\end{eqnarray*}
To estimate $J_{11}$, we have
\begin{eqnarray*}
J_{11} & \le & C\io \frac{v(y)}{|y|^{2s+\mu(\l)}}\Big(\int_{B_r(0)}\dfrac{v^{\a-1}(x)}{|x-y|^{N-(2s-\a)}}dx\Big)dy.
\end{eqnarray*}
Recall that $v\in L^\s(\O)$ for all $\s<p_2=\frac{N}{N-2s}$. Since $\a<p_*=\frac{N}{N-2s+1}$. Let $\s_0<p_2$ and using H\"older inequality we obtain that
$$
\int_{B_r(0)}\dfrac{v^{\a-1}(x)}{|x-y|^{N-(2s-\a)}}dx\le \bigg(\int_{B_r(0)}v^{\s_0}dx \bigg)^{\frac{\a-1}{\s_0}}
\bigg(\int_{B_r(0)}\frac{1}{|x-y|^{\frac{(N-(2s-\a))\s_0}{\s_0-(\a-1)}}}dx\bigg)^{\frac{\s_0-(\a-1)}{\s_0}}.
$$
Since $\a<p_*$, then we can chose $\s_0$ close to $p_2$ such that $\frac{(N-(2s-\a))\s_0}{\s_0-(\a-1)}<N$.
Thus
$$
\int_{B_r(0)}\frac{1}{|x-y|^{\frac{(N-(2s-\a))\s_0}{\s_0-(\a-1)}}}dx\le C(r,\O),
$$
and then
\begin{equation}\label{j11}
J_{11}\le C \bigg(\int_{B_r(0)}v^{\s_0}dx \bigg)^{\frac{\a-1}{\s_0}}\bigg(\io \frac{v(y)}{|y|^{2s+\mu(\l)}}dy\bigg)\le C\bigg(\io \frac{f(y)}{|y|^{\mu(\l)+a_0}}dy\bigg)^\a.
\end{equation}
We deal now with $J_{12}$. Notice that $\{|x|\le \frac 12 |y|\}\subset \{|x-y|\ge \frac 12 |y|\}$. Thus
\begin{eqnarray*}
J_{12} & \le & C\io \frac{v(y)}{|y|^{\mu(\l)+2s}}\Big(\int_{B_r(0)}\dfrac{v^{\a-1}(x)}{|x|^{\mu(\l)}|x-y|^{N-(2s+\mu(\l)-\a)}}dx\Big)dy.
\end{eqnarray*}
As in the estimate of $J_{11}$, setting $\theta=\dfrac{\s_0}{\s_0-(\a-1)}$, we have
\begin{eqnarray*}
& \dyle \int_{B_r(0)}\dfrac{v^{\a-1}(x)}{|x|^{\mu(\l)}|x-y|^{N-(2s+\mu(\l)-\a)}}dx\\
&\dyle \le \bigg(\int_{B_r(0)}v^{\s_0}dx \bigg)^{\frac{\a-1}{\s_0}}
\bigg(\int_{B_r(0)}\frac{1}{|x|^{\mu(\l)\theta}|x-y|^{(N-(2s+\mu(\l)-\a))\theta}}dx\bigg)^{\frac{1}{\theta}}.
\end{eqnarray*}
Since $\mu(\l)\theta<N$( for $\s_0$ close to $p_2$), using again H\"older inequality, we obtain that
\begin{eqnarray*}
& \dyle \int_{B_r(0)}\frac{1}{|x|^{\mu(\l)\theta}|x-y|^{(N-(2s+\mu(\l)-\a))\theta}}dx \le \\
& \dyle \bigg(\int_{B_r(0)}\frac{1}{|x|^{N-\e}}dx\bigg)^{\frac{\mu(\l)\theta}{N-\e}}  \bigg(\int_{B_r(0)}
\frac{1}{|x-y|^{\frac{(N-(2s+\mu(\l)-\a))\theta (N-\e)}{N-\e-\mu(\l)\theta}}}dx\bigg)^{\frac{N-\e-\mu(\l)\theta}{N-\e}}.
\end{eqnarray*}
By a direct computation and using the fact that $\a<p_*$, we obtain that $\frac{(N-(2s+\mu(\l)-\a))\theta N}{N-\mu(\l)\theta}<N$. Hence we get the existence of $\e>0$ small such that $\frac{(N-(2s+\mu(\l)-\a))\theta (N-\e)}{N-\e-\mu(\l)\theta}<N$ and we conclude that
\begin{equation}\label{j12}
J_{12}\le C\bigg(\io \frac{f(y)}{|y|^{\mu(\l)+a_0}}dy\bigg)^\a.
\end{equation}
As a consequence, we have
\begin{equation}\label{j1}
J_{1}\le C\bigg(\io \frac{f(y)}{|y|^{\mu(\l)+a_0}}dy\bigg)^\a.
\end{equation}
We deal now with $J_2$. We will use the same decomposition as in the estimate of $J_1$.
\begin{eqnarray*}
J_2 & \le & C\io f(y)\Big(\int_{B_r(0)} \dfrac{v^{\a-1}(x)}{|x|^{\mu(\l)}|x-y|^{N-(2s-\a)}}dx\Big)dy\\
& \le & \dyle C\io f(y)\Big(\int_{B_r(0)\cap \{|x|\ge \frac 12 |y|\}} \dfrac{v^{\a-1}(x)}{|x|^{\mu(\l)}|x-y|^{N-(2s-\a)}}dx\Big)dy\\
&+ & \dyle C_3\io f(y)\Big(\int_{B_r(0)\cap \{|x|<\frac 12 |y|\}} \dfrac{v^{\a-1}(x)}{|x|^{\mu(\l)}|x-y|^{N-(2s-\a)}}dx\Big)dy\\
&\le & J_{21}+J_{22}.
\end{eqnarray*}
To estimate $J_{21}$, we have
\begin{eqnarray*}
J_{21} & \le & C_3\io \frac{f(y)}{|y|^{\mu(\l)}}\Big(\int_{B_r(0)}\dfrac{v^{\a-1}(x)}{|x-y|^{N-(2s-\a)}}dx\Big)dy.
\end{eqnarray*}
For the integral $\int_{B_r(0)}\dfrac{v^{\a-1}(x)}{|x-y|^{N-(2s-\a)}}dx$, we use the same computations as in the estimate of $J_{11}$ (since we are with the same range of  parameters), and then we conclude that
$$
\begin{array}{rcl}
\displaystyle\int_{B_r(0)}\dfrac{v^{\a-1}(x)}{|x-y|^{N-(2s-\a)}}dx&\le& \bigg(\displaystyle\int_{B_r(0)}v^{\s_0}dx \bigg)^{\frac{\a-1}{\s_0}}
\bigg(\displaystyle\int_{B_r(0)}\frac{1}{|x-y|^{\frac{(N-(2s-\a))\s_0}{\s_0-(\a-1)}}}dx\bigg)^{\frac{\s_0-(\a-1)}{\s_0}}\\
&\le& C\bigg(\displaystyle\int_{B_r(0)}v^{\s_0}dx \bigg)^{\frac{\a-1}{\s_0}}.
\end{array}
$$
Thus,
\begin{equation}\label{j21}
J_{21}\le C \bigg(\int_{B_r(0)}v^{\s_0}dx \bigg)^{\frac{\a-1}{\s_0}}\bigg(\io \frac{f(y)}{|y|^{\mu(\l)}}dy\bigg)\le C\bigg(\io \frac{f(y)}{|y|^{\mu(\l)}}dy\bigg)^\a.
\end{equation}
To analyze $J_{22}$ we use also the fact that $\{|x|\le \frac 12 |y|\}\subset \{|x-y|\ge \frac 12 |y|\}$. Then
\begin{eqnarray*}
J_{22} & \le & C\io \frac{f(y)}{|y|^{\mu(\l)}}\Big(\int_{B_r(0)}\dfrac{v^{\a-1}(x)}{|x|^{\mu(\l)}|x-y|^{N-(2s+\mu(\l)-\a)}}dx\Big)dy.
\end{eqnarray*}
As in the estimate of $J_{12}$, it holds that
\begin{eqnarray*}
& \dyle \int_{B_r(0)}\dfrac{v^{\a-1}(x)}{|x|^{\mu(\l)}|x-y|^{N-(2s+\mu(\l)-\a)}}dx\\
&\dyle \le \bigg(\int_{B_r(0)}v^{\s_0}dx \bigg)^{\frac{\a-1}{\s_0}}
\bigg(\int_{B_r(0)}\frac{1}{|x|^{\mu(\l)\theta}|x-y|^{(N-(2s+\mu(\l)-\a))\theta}}dx\bigg)^{\frac{1}{\theta}}\le C \bigg(\int_{B_r(0)}v^{\s_0}dx \bigg)^{\frac{\a-1}{\s_0}}.
\end{eqnarray*}
Therefore, we obtain
\begin{equation}\label{j22}
J_{22}\le C\bigg(\io \frac{f(y)}{|y|^{\mu(\l)}}dy\bigg)^\a.
\end{equation}
Hence,
\begin{equation}\label{j2}
J_{2}\le C\bigg(\io \frac{f(y)}{|y|^{\mu(\l)}}dy\bigg)^\a.
\end{equation}
From \eqref{j1} and \eqref{j2} it holds that
$$
\dyle\int_{B_r(0)} |\n v|^{\a} |x|^{-\mu(\l)} dx \le C \bigg(\io \frac{f(y)}{|y|^{\mu(\l)}}dy\bigg)^\a,
$$
and then result follows.
\end{proof}

With all the above machinery,  we are able to show the next existence result.
\begin{Theorem}\label{exit-l1}
Assume that $1<p<p_*$ and let $f\in L^1(\Omega)$ be a nonnegative function such that $\io f|x|^{-\mu(\l)-a_0}dx<\infty$ for some $a_0>0$. Then, there exists $\mu^*>0$ such that if $\mu<\mu^*$, then problem \eqref{existencia} has a
solution $u$ such that $u\in W^{1,\s}_0(\O)$ for all $\s<\frac{N}{N-2s+1}$, moveover $\io |\n u|^p|x|^{-\mu(\l)} dx<\infty$.
\end{Theorem}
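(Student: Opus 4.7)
The plan is to produce $u$ via a Schauder fixed-point argument applied to the solution map of the linear Hardy problem, with the weighted gradient estimate \eqref{fix3} of Proposition \ref{main-exi} playing the role of main a priori bound. Fix $\a$ with $p<\a<p_*$ and, for $R>0$ to be chosen later, consider the closed convex set
$$E_R:=\Big\{w\in W^{1,\a}_0(\O)\,:\, \io|\n w|^\a|x|^{-\mu(\l)}\,dx\le R\Big\},$$
and define $T:E_R\to W^{1,\a}_0(\O)$ by $T(w):=u$, the unique weak solution of
$$(-\D)^s u-\l\frac{u}{|x|^{2s}}=|\n w|^p+\mu f\ \mbox{ in }\ \O,\qquad u=0\ \mbox{ in }\ \ren\setminus\O,$$
whose existence is granted by \eqref{hardyff} as soon as the source lies in $L^1(|x|^{-\mu(\l)-a_0}dx)$.

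To see that $T$ is well defined, I would apply H\"older's inequality with conjugate exponents $\a/p$ and $\a/(\a-p)$:
$$\io|\n w|^p|x|^{-\mu(\l)-a_0}dx\le\bigg(\io|\n w|^\a|x|^{-\mu(\l)}dx\bigg)^{p/\a}\bigg(\io|x|^{-\mu(\l)-a_0\a/(\a-p)}dx\bigg)^{(\a-p)/\a}.$$
Since $\O$ is bounded and the integrability assumption on $f$ is preserved when $a_0$ is decreased, I may assume $a_0$ small enough that $\mu(\l)+a_0\a/(\a-p)<N$, so that the second factor is finite. Plugging the source $g=|\n w|^p+\mu f$ into Proposition \ref{main-exi} then yields
$$\io|\n u|^\a|x|^{-\mu(\l)}dx\le C\Big(C_1R^{p/\a}+\mu\|f\|_{L^1(|x|^{-\mu(\l)-a_0}dx)}\Big)^\a.$$
Because $p>1$ and $\a>p$, picking $R$ small enough and then $\mu^*>0$ small enough forces the right-hand side to be bounded by $R$, so that $T(E_R)\subset E_R$ whenever $\mu<\mu^*$.

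Continuity and compactness of $T$ on $E_R$ follow from the compactness part of Theorem \ref{key} (which gives a compact operator $L^1(\O)\to W^{1,q}_0(\O)$ for every $q<p_*$) together with Vitali's lemma applied to the nonlinear term $|\n w_n|^p$, exactly in the same spirit as the closing argument of Theorem \ref{exit1}. Schauder's fixed-point theorem then produces $u=T(u)\in E_R$, which is the desired solution; positivity is obtained through the comparison principle of Theorem \ref{comparacion}, the regularity $u\in W^{1,\s}_0(\O)$ for $\s<p_*$ comes from Theorem \ref{key}, and the integrability $\io|\n u|^p|x|^{-\mu(\l)}dx<\infty$ is a direct consequence of the a priori estimate above combined with $p<\a$ and $\O$ bounded. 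The main technical obstacle is precisely the weight mismatch between the hypothesis $|x|^{-\mu(\l)-a_0}$ and the conclusion $|x|^{-\mu(\l)}$ of Proposition \ref{main-exi}: the strict inequality $p<p_*$ provides exactly the slack needed to bridge this gap through the H\"older step, but only at the price of taking $\a$ very close to $p_*$ and, if necessary, replacing $a_0$ by a smaller value, so that the quantitative condition $(\a-p)(N-\mu(\l))>a_0\a$ holds; orchestrating the simultaneous choice of $\a$, $R$ and $\mu^*$ is the delicate point in the proof.
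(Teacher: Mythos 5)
Your proposal is correct and takes essentially the same route as the paper: a Schauder fixed-point argument on a closed ball in the weighted gradient space $W^{1,r}_0(|x|^{-\mu(\l)}dx,\O)$, with Proposition \ref{main-exi} providing the a priori bound, H\"older's inequality absorbing the weight mismatch between $|x|^{-\mu(\l)-a_0}$ and $|x|^{-\mu(\l)}$, and compactness coming from Theorem \ref{key} plus a Vitali argument as in Theorem \ref{exit1}. The only differences are notational (the paper parametrizes the ball radius as $l^{1/p}$ and determines $\mu^*$ via the scalar equation $C_0(l+\mu^*\|f\|)=l^{1/p}$, whereas you write the self-map condition directly as a scalar inequality in $R^{1/\a}$), so there is nothing substantive to flag.
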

\begin{proof}
We follow again the arguments used in \cite{APN}.  Fix $p<p_*$ and let $f\in L^1(\O)$ be a nonnegative function with  $\io f|x|^{-\mu(\l)-a_0}dx<\infty$.

Fix $1<p<r<p_*$. Then, we can chose $\mu^*>0$ such that for some $l>0$, we have
$$
C_0(l+\m^*||f||_{L^1(|x|^{-\mu(\l)}dx,\O)})=l^{\frac{1}{p}},
$$
where $C_0$ is a positive constant depending only on $\O,\l$ and $C(\O,\l)$ given in \eqref{fix2}.

Let $\mu<\mu^*$ be fixed and  define the set
\begin{equation}\label{sett}
E=\{v\in W^{1,1}_0(\O): v\in W^{1,r}_0(|x|^{-\mu(\l)}dx,\O)\mbox{  and  }||\n v||_{{L^r(|x|^{-\mu(\l)}dx,\O)}}\le l^{\frac{1}{p}}\},
\end{equation}
where $p<r<p_*$. It is clear that $E$ is a closed convex set of $W^{1,1}_0(\O)$. Consider the operator
$$
\begin{array}{rcl}
T:E &\rightarrow& W^{1,1}_0(\O)\\
   v&\rightarrow&T(v)=u
\end{array}
$$
where $u$ is the unique solution to problem
\begin{equation}
\left\{
\begin{array}{rcll}
(-\Delta)^s u &= & \lambda \dfrac{u}{|x|^{2s}} +|\nabla v|^{q}+\mu f &
\text{ in }\Omega , \\ u &=& 0 &\hbox{  in } \mathbb{R}^N\setminus\Omega,\\ u&>&0 &\hbox{
in }\Omega.
\end{array}%
\right.  \label{fix1}
\end{equation}%
Taking into consideration the definition of $E$, it holds that $|\nabla v|^{q}+\mu f \in L^1(|x|^{-\mu(\l)}dx,\O)$. Hence the existence and the uniqueness of $u$ follows using the result of \cite{AMPP} with $u\in W^{1,\s}_0(\O)$ for all $\s<\frac{N}{N-2s+1}$. Thus $T$ is well defined.

We claim that $T(E)\subset E$.  Since $r>p$, then using H\"older inequality we get the existence of $\hat{a}_0>0$ such that
$$
\io |\nabla v|^{p}|x|^{-\mu(\l)-\hat{a}_0}dx\le C(\O)\bigg(\io |\nabla v|^{r}|x|^{-\mu(\l)}dx\bigg)^{\frac{p}{r}}<
\infty.
$$
Setting $\bar{a}_0=\min\{a_0, \hat{a}_0\}$, it holds that $|\n v|^p +\mu f\in L^1(|x|^{-\mu(\l)-\bar{a}}dx, \O)$. Thus by Proposition \ref{main-exi}, we reach
$$
\bigg(\io |\nabla u|^{\s}|x|^{-\mu(\l)}dx\bigg)^{\frac{1}{\s}}\le C(N,p,\bar{a})\big\||\n v|^p +\mu f\bigg\|_{L^1(|x|^{-\mu(\l)-\bar{a}}dx, \O)}.
$$
Since $v\in E$, we conclude that
\begin{eqnarray*}
\dyle \bigg(\io |\nabla u|^{\s}|x|^{-\mu(\l)}dx\bigg)^{\frac{1}{\s}} &\le & C(N,p,\bar{a})\big(
\bigg(\io |\nabla v|^{r}|x|^{-\mu(\l)}dx\bigg)^{\frac{p}{r}} +\mu ||f||_{L^1(|x|^{-\mu(\l)-a_0}dx, \O)}\bigg)\\
&\le & \dyle C(l+\mu^* ||f||_{L^1(|x|^{-\mu(\l)-a_0}dx, \O)})\le l.
\end{eqnarray*}
Choosing $\s=r$, it holds that $u\in E$.

The continuity and the compactness of $T$ follow using closely the same arguments as in \cite{APN}.

As a conclusion and using the Schauder Fixed Point Theorem as in \cite{APN}, there exists $u\in E$ such that $T(u)=u$, $u\in W^{1,p}_0(|x|^{-\mu(\l)}dx,\O)$ and, therefore, $u$ solves \eqref{existencia}.
\end{proof}

\section{Existence under the presence of a zero order term vanishing at infinity.}

In this section we consider the problem
\begin{equation}\label{gener0}
\left\{
\begin{array}{rcll}
(-\Delta )^s u &=&\lambda \dfrac{u}{|x|^{2s}}+ \dfrac{|\nabla u|^{p}}{(1+u)^\a}+ cf &\inn \Omega,\\
u&>&0 & \inn\Omega,\\
u&=&0 & \inn(\mathbb{R}^N\setminus\Omega),
\end{array}\right.
\end{equation}
where $\a>0$ and  $p<2s$. The main objective of this section is to get a relation between $\a$ and $p$ in order to get the existence of a solution for some $p>p_+(\l, s)$.

The local case was treated in \cite{AGMP} where the term $\dfrac{1}{(1+u)^\a}$ 
is replaced by $\dfrac{1}{u^\a}$. The problem in this case is strongly related 
to the \textit{porus medium} equation with Hardy potential. 
Existence result is obtained under the condition that 
$\dfrac{(\mu(\l)+1)(p-1)-1}{\mu(\l)}<\a<p$, where $\mu(\lambda)$ is defined in \eqref{g1}. The arguments used in \cite{AGMP} are based on the choose of suitable text functions and the connection between the laplacian 
operator and the gradient term through the integration by parts formula.

This approach fails in the case of the fractional Laplacian due to the nonlocal nature of the operator and the like of a direct relation between the fractional Laplacian and the gradient term. To overcome these difficulties, we will use monotony argument and the representation formula.

The main existence result of this section is the following.
\begin{Theorem}\label{super}
Assume that $\a>2s-1$. Suppose que $0\lneqq f\le \dfrac{1}{|x|^\beta}$ where $\mu(\l)<\beta<\bar{\mu}(\l)$ and $\beta$ is close to $\mu(\l)$. Then there exists $c^*>0$ such that if $c<c^*$, then problem \eqref{gener0} has a solution in the sense of Definition \ref{def1} with $\dfrac{u}{|x|^{2s}}+ \dfrac{|\nabla u|^{p}}{(1+u)^\a}\in L^1(\O)$.
\end{Theorem}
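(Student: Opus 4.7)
The strategy is to build a supersolution $\bar u$ to \eqref{gener0} and then produce a solution in the order interval $[0,\bar u]$ by a Schauder fixed-point argument, following the blueprint of Theorem~\ref{exit-l1}. The new feature is that the denominator $(1+u)^\alpha$ damps the gradient nonlinearity precisely where $u$ is large, i.e., near the origin, where the Hardy-type lower bound of Lemma~\ref{estim0} forces $u(x)\gtrsim |x|^{-\mu(\lambda)}$. Combined with a pointwise upper bound $|\nabla \bar u(x)|\lesssim |x|^{-\mu(\lambda)-1}$ to be established in Step~1, this gives
$$
\frac{|\nabla \bar u|^p}{(1+\bar u)^\alpha}\;\lesssim\; |x|^{\mu(\lambda)(\alpha-p)-p}\qquad\text{near the origin.}
$$
Since $p<2s$ and $\alpha>2s-1$ imply $\alpha+1-p>2s-p>0$, the exponent $\mu(\lambda)(\alpha-p)-p$ is $\ge -\beta$ provided $\beta>\mu(\lambda)$ is chosen sufficiently close to $\mu(\lambda)$; hence the gradient nonlinearity is bounded near $0$ by a multiple of $|x|^{-\beta}$, which is the point where the two hypotheses $\alpha>2s-1$ and "$\beta$ close to $\mu(\lambda)$" cooperate.

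\noindent\emph{Step 1 (Supersolution).} Take $\bar u$ to be the unique weak solution of
$$
(-\Delta)^s \bar u=\lambda\frac{\bar u}{|x|^{2s}}+K|x|^{-\beta}\ \text{in}\ \Omega,\qquad \bar u=0\ \text{in}\ \mathbb{R}^N\setminus\Omega,
$$
with $K>c$ to be fixed. Existence, together with the weighted estimate $\nabla\bar u\in L^r(|x|^{-\mu(\lambda)}dx,\Omega)$ for every $r<N/(N-2s+1)$, follows from Proposition~\ref{main-exi} and the results of Section~\ref{4}, since $\beta+\mu(\lambda)<N$ for $\beta$ close to $\mu(\lambda)$; Lemma~\ref{estim0} then gives $\bar u(x)\ge c_1|x|^{-\mu(\lambda)}$ near $0$. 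Starting from the representation $\bar u(x)=\int_\Omega\mathcal{G}_s(x,y)\bigl[\lambda\bar u(y)/|y|^{2s}+K|y|^{-\beta}\bigr]dy$, the kernel estimates of Lemma~\ref{estimmm}, splitting the domain into $\{|y|\ge |x|/2\}\cup\{|y|<|x|/2\}$ and iterating the a priori bound $\bar u\lesssim|x|^{-\mu(\lambda)}$, one upgrades the weighted gradient estimate to the pointwise control $|\nabla\bar u(x)|\le c_2|x|^{-\mu(\lambda)-1}$ near the origin (away from the origin, $\bar u$ is bounded and regular). The computation displayed above then shows that, for $K$ large and $c<c^*$ small, $\bar u$ satisfies $(-\Delta)^s\bar u-\lambda\bar u/|x|^{2s}=K|x|^{-\beta}\ge|\nabla\bar u|^p/(1+\bar u)^\alpha+cf$, i.e., $\bar u$ is a supersolution of \eqref{gener0}.

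\noindent\emph{Step 2 (Fixed point).} On the closed convex set
$$
E=\bigl\{v\in W^{1,r}_0(|x|^{-\mu(\lambda)}dx,\Omega):\ 0\le v\le \bar u,\ \|\nabla v\|_{L^r(|x|^{-\mu(\lambda)}dx,\Omega)}\le L\bigr\}
$$
with $r<N/(N-2s+1)$, define $T(v)=u$ as the unique weak solution of $(-\Delta)^s u-\lambda u/|x|^{2s}=|\nabla v|^p/(1+v)^\alpha+cf$. By the supersolution inequality, the right-hand side lies in $L^1(|x|^{-\mu(\lambda)-a_0}dx,\Omega)$ for some $a_0>0$, so Proposition~\ref{main-exi} applies and, together with the comparison Theorem~\ref{comparacion} enforcing $T(v)\le\bar u$, yields $T(E)\subset E$ for $L$ chosen appropriately large. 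Continuity and compactness of $T$ in $W^{1,1}_0(\Omega)$ follow exactly as in Theorem~\ref{exit-l1}, using Theorem~\ref{key}(2) and Vitali's lemma. Schauder's fixed-point theorem then produces $u\in E$ with $T(u)=u$, which is a solution of \eqref{gener0} in the sense of Definition~\ref{def1}, with $u/|x|^{2s}+|\nabla u|^p/(1+u)^\alpha\in L^1(\Omega)$.

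\noindent\emph{Main obstacle.} The decisive technical step is the pointwise gradient bound $|\nabla\bar u(x)|\le c_2|x|^{-\mu(\lambda)-1}$ near the origin in Step~1, which goes strictly beyond the weighted $L^r$-bound given by Proposition~\ref{main-exi}. Its proof requires a careful analysis of the Green kernel via the estimates of Lemma~\ref{estimmm}, splitting near and far from $y=0$ and iterating the Hardy-type asymptotic $\bar u\lesssim|x|^{-\mu(\lambda)}$ through the singular convolution. Without such a sharp pointwise control, the precise role played by the hypothesis $\alpha>2s-1$ in dominating the gradient nonlinearity (and hence the mechanism allowing $p$ to reach the range $p_+(\lambda,s)<p<2s$) would be obscured.
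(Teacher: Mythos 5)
Your proposal diverges substantially from the paper's proof and contains two genuine gaps, one of which you acknowledge and one of which you do not.

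\textbf{The acknowledged gap is fatal as it stands.} The paper never establishes, nor needs, a pointwise bound of the form $|\nabla\bar u(x)|\le c_2|x|^{-\mu(\lambda)-1}$ near the origin. Proposition~\ref{main-exi} and Theorem~\ref{key} give only \emph{weighted integral} gradient estimates, and extracting a genuine pointwise decay rate from the Green representation through Lemma~\ref{estimmm} is a $C^{1}$-type boundary regularity statement at a singular point of the potential; nothing in the paper supports it. Because your Step~2 puts the fixed-point set $E$ inside $W^{1,r}_0(|x|^{-\mu(\lambda)}dx,\Omega)$ with $r<N/(N-2s+1)=p_*$, while $p$ in Theorem~\ref{super} is allowed up to $2s>p_*$, the term $|\nabla v|^{p}/(1+v)^{\alpha}$ is not controlled by the norm defining $E$ unless you have precisely that pointwise gradient bound. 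So the Schauder scheme of Theorem~\ref{exit-l1} does not transplant; the whole novelty of this section is that $p$ may exceed $p_*$, and your fixed-point set is calibrated for the regime $p<p_*$.

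\textbf{The unacknowledged gap is in the supersolution arithmetic.} You define $\bar u$ as the solution of $(-\Delta)^s\bar u-\lambda\bar u/|x|^{2s}=K|x|^{-\beta}$, so near the origin $(-\Delta)^s\bar u-\lambda\bar u/|x|^{2s}$ is only as singular as $|x|^{-\beta}$, with $\beta$ near $\mu(\lambda)$. Assuming (optimistically) the pointwise bounds $\bar u\gtrsim |x|^{-\mu(\lambda)}$, $|\nabla\bar u|\lesssim |x|^{-\mu(\lambda)-1}$, the nonlinearity satisfies $|\nabla\bar u|^p/(1+\bar u)^\alpha\lesssim |x|^{\mu(\lambda)(\alpha-p)-p}$, and you need $\mu(\lambda)(\alpha-p)-p\ge-\beta$. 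Sending $\beta\to\mu(\lambda)^{+}$ this becomes $\mu(\lambda)(\alpha-p+1)\ge p$, i.e.\ $\alpha\ge p-1+p/\mu(\lambda)$, which for $p$ close to $2s$ forces $\alpha\gtrsim 2s-1+2s/\mu(\lambda)$. That is strictly stronger than the hypothesis $\alpha>2s-1$: the sentence ``$\alpha+1-p>2s-p>0$ implies the exponent is $\ge-\beta$'' does not hold. The paper dodges this by taking the \emph{explicit} supersolution $v=A|x|^{-\beta}$, for which $(-\Delta)^s v-\lambda v/|x|^{2s}=AC(\beta)|x|^{-\beta-2s}$ carries the additional factor $|x|^{-2s}$; the balance condition then reads $p\le\dfrac{\beta(\alpha+1)+2s}{\beta+1}$, and letting $\beta\to\mu(\lambda)^{+}$ reduces it exactly to $\alpha>2s-1$.

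\textbf{What the paper actually does.} It takes $v=A|x|^{-\beta}$ as above (note the theorem's hypothesis $f\lesssim|x|^{-\beta}$ is used in the proof with $f\lesssim|x|^{-\beta-2s}$ when matching the leading singularities), then runs a monotone approximation scheme \eqref{gener1} whose solutions $u_n$ are trapped between $0$ and $v$ by Proposition~\ref{lm:comp}. The decisive a priori bound is \emph{not} pointwise but integral: from the Green representation and $|\nabla_x\mathcal{G}_s|\le C\,h(x,y)\mathcal{G}_s$ one derives the self-referential estimate
\[
\int_\Omega \frac{|\nabla u_n|^{\sigma}}{(1+u_n)^{\sigma-1}}\,dx
\le C_1\int_\Omega \frac{|\nabla u_n|^{p}}{(1+u_n)^{\alpha}}\,dx+C_3,
\]
with $\sigma\in\bigl(p,\max\{2s,\ p/(p-(2s-1))\}\bigr)$, and closes the loop via Young's inequality. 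The hypothesis $\alpha>2s-1$ is what guarantees both that this window of $\sigma$ is nonempty and that $\alpha\sigma-p(\sigma-1)\ge0$, so the residual term $\int(1+u_n)^{-(\alpha\sigma-p(\sigma-1))/(\sigma-p)}$ is bounded by $|\Omega|$. This gives a uniform $L^1$ bound on the full right-hand side, after which the compactness of Theorem~\ref{key} and Vitali's lemma finish the argument. The roles of the two hypotheses are therefore quite different from what your proposal assigns them, and no pointwise gradient asymptotics are ever invoked.
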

Before starting with the proof of the previous Theorem,  we state the next comparison principle.
\begin{Proposition}\label{lm:comp}
Assume that
$H:\mathbb{R}^N\rightarrow\mathbb{R}^N $ is a bounded function
satisfying
$$ |H(\xi_1)-H(\xi_2)|\le C|\xi_1-\xi_2|,\quad \forall \xi_1,\xi_2\in \mathbb{R}^N,\quad C>0.$$
Consider  $w_1, w_2$ positive functions such that $w_1, w_2\in W^{1,r}_0(\Omega)$,  $1<r<\frac{N}{N-2s+1}$, satisfying
\begin{equation}\label{eq:compar1g} \left\{
\begin{array}{rcll}
(-\Delta)^s w_1&\le& \dfrac{1}{(w_1+1)^\a}H(\n w_1)+g,&\hbox{ in  }\Omega,\\
w_1&=&0  &\hbox  {  on   }\partial\Omega,
\end{array} \right.
\end{equation}
and
\begin{equation}\label{eq:compar2g}
\left\{
\begin{array}{rcll}
(-\Delta)^s w_2&\ge& \dfrac{1}{(w_2+1)^\a}H(\n  w_2)+g,&\hbox{ in  }\Omega,\\
w_2&=&0 &\hbox{  on   } \partial \Omega,
\end{array} \right.
\end{equation}
where $\a>0$ and $g\in L^1(\O)$. Then, $w_2\ge w_1$ in $\Omega$.
\end{Proposition}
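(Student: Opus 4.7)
The approach is to show that the positive part $W:=(w_1-w_2)_+$ is identically zero by reducing to the comparison principle of Theorem \ref{compa2}.

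First, I would subtract \eqref{eq:compar1g} and \eqref{eq:compar2g} and apply the fractional Kato inequality $(-\Delta)^s W \le \chi_{\{w_1>w_2\}}(-\Delta)^s(w_1-w_2)$ to obtain
\[
(-\Delta)^s W \;\le\; \chi_{\{w_1>w_2\}}\left[\frac{H(\nabla w_1)}{(1+w_1)^\alpha}-\frac{H(\nabla w_2)}{(1+w_2)^\alpha}\right]\qquad\text{a.e. in }\Omega,
\]
together with $W\equiv 0$ in $\mathbb{R}^N\setminus\Omega$.

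The second step is to split the bracket as
\[
\frac{H(\nabla w_1)-H(\nabla w_2)}{(1+w_1)^\alpha}+H(\nabla w_2)\left[\frac{1}{(1+w_1)^\alpha}-\frac{1}{(1+w_2)^\alpha}\right].
\]
On $\{w_1>w_2\}$, the strict monotonicity of $t\mapsto(1+t)^{-\alpha}$ together with the non-negativity of $H$ (as in the target application \eqref{gener0}, where $H(\xi)=|\xi|^p$) renders the second summand non-positive, while the Lipschitz hypothesis on $H$ combined with $(1+w_1)^{-\alpha}\le 1$ bounds the first by $C|\nabla W|$. Setting
\[
\widetilde H(x,\xi):=\frac{H(\xi+\nabla w_2(x))-H(\nabla w_2(x))}{(1+w_1(x))^\alpha}\,\chi_{\{w_1>w_2\}},
\]
the inequality for $W$ takes the shape $(-\Delta)^s W \le \widetilde H(x,\nabla W)$ in $\Omega$, where $\widetilde H(x,0)=0$ and $\widetilde H(\cdot,\xi)$ is Lipschitz in $\xi$ with weight $b\in L^\infty(\Omega)\subset L^\sigma(\Omega)$ for every admissible $\sigma$.

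Finally, since $V\equiv 0$ trivially satisfies $(-\Delta)^s V \ge \widetilde H(x,\nabla V)$, Theorem \ref{compa2} applied with $g\equiv 0$ to the pair $(W,V)$ delivers $W\le 0$, and therefore $w_1\le w_2$ in $\Omega$. The main obstacle I expect is the rigorous derivation of the Kato inequality for $W$ in the low-regularity framework $W^{1,r}_0(\Omega)$ with $r<p_*$: this should be handled by regularizing the map $t\mapsto t_+$ by smooth convex increasing functions $\Phi_\varepsilon$, using the chain-rule inequality $(-\Delta)^s\Phi_\varepsilon(v)\le \Phi'_\varepsilon(v)(-\Delta)^s v$, and passing to the limit with the help of the available integrability of $\tfrac{|\nabla w_i|^p}{(1+w_i)^\alpha}$ and of $g$. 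A secondary point is the implicit sign condition on $H$; if it is not granted, the second summand above contributes an extra term controlled by $\alpha\,\|H\|_\infty W$ which must be absorbed by testing with a truncation of $W$ and invoking the fractional Poincaré inequality, possibly on sufficiently small subdomains.
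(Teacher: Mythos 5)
Your argument coincides with the paper's proof in all essentials: the same splitting of the nonlinearity difference, Kato's inequality to pass to the positive part, the sign of $H$ to discard the monotone zero-order term, a Lipschitz bound on the remaining term, and a fractional comparison/maximum principle to conclude (the paper invokes the maximum principle of Theorem 3.1 in \cite{APN} directly, while you reroute through Theorem \ref{compa2} against the zero function, which is the same tool in a slightly different guise). Your remark that the proof tacitly uses $H\geq 0$ -- a hypothesis the statement leaves implicit, writing only $H:\mathbb{R}^N\to\mathbb{R}^N$ -- is well taken and flags a genuine imprecision in the paper's formulation of the proposition.
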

\begin{pf}
Define $w=w_1-w_2$, then $w \in W^{1,r}_0(\Omega)$ with $1<r<\frac{N}{N-2s+1}$. We have just to show that $w^+=0$.

Using \eqref{eq:compar1g} and \eqref{eq:compar2g}, it follows that
$$
(-\Delta)^s w\le \frac{1}{(w_1+1)^\a}(H(\n\, w_1)- H(\n\, w_2))+\left(\frac{1}{(w_1+1)^\a}- \frac{1}{(w_2+1)^\a}\right)H(\n\, w_2),
$$
then,
$$
(-\Delta)^s w\le C\frac{1}{(w_1+1)^\a}|\n\, w|+\left(\frac{1}{(w_1+1)^\a}- \frac{1}{(w_2+1)^\a} \right) H(\n\, w_2).
$$
Since the second member in the previous inequality is bounded in $L^1(\Omega)$, then using Kato's inequality, we get
$$
(-\Delta)^s w_{+}\le C|\n w_{+}|, \qquad w_{+}=\rm max\{w,0\}\in W^{1,r}_0(\Omega).
$$
Therefore, by using the maximum principle obtained in Theorem 3.1 of \cite{APN}, we reach that $w_{+}\equiv 0$. Hence we conclude.
\end{pf}

Now, let $\beta>0$ be such that $\mu(\l)<\beta<\bar{\mu}(\l)$, $\beta$ is close to $\mu(\l)$ in such a way that $|x|^{-\beta-2s}\in L^2(\O)$, hence $(-\D)^s|x|^{-\beta}\in L^2(\Omega)$.

Define $v(x)=\frac{A}{|x|^\beta}$, since $\a>2s-1$, then $\dfrac{\mu(\l)(\a+1)+ 2s}{\mu(\l)+1}>2s>p$. Hence we get the existence of $\beta>0$ such that $\mu(\l)<\beta<\bar{\mu}(\l)$, $\beta$ closed to $\mu(\l)$ and $\dfrac{\beta(\a+1)+ 2s}{\beta+1}>2s>p$.

Fix $\beta$ as above, then we get the existence of $A>0$ and $c^*>0$ such that if $f(x)\le \frac{1}{|x|^{\beta+2s}}$ and $c<c^*$,
$$
(-\Delta )^s v\ge \lambda \dfrac{v}{|x|^{2s}}+ \dfrac{|\nabla v|^{p}}{v^\a}+ c f \inn \Omega.
$$
Hence $v$ is a supersolution to problem \eqref{gener}.

{\bf Proof of Theorem \ref{super}.}

We will use a monotony argument.

Define now $u_n$ to be the minimal solution to the approximating problem
\begin{equation}\label{gener1}
\left\{
\begin{array}{rcll}
(-\Delta )^s u_n &=&\lambda \dfrac{u_n}{1+\frac{1}{n}u_n}\dfrac{1}{|x|^{2s}}+ \dfrac{|\nabla u_n|^{p}}{(1+\frac 1n |\nabla u_n|^p)(1+u_n)^\a}+ c f &\inn \Omega,\\
u_n&>&0 & \inn\Omega,\\
u_n&=&0 & \inn(\mathbb{R}^N\setminus\Omega).
\end{array}\right.
\end{equation}
Since $v$ is a supersolution to problem \eqref{gener1}, then using the comparison principle in Proposition \ref{lm:comp} it holds that the sequence $\{u_n\}_n$ is increasing in $n$ and $u_n\le v$ for all $n$. Hence we get the existence of a measurable function $u$ such that $u_n\nearrow u$ strongly in $L^\theta(\O)$ for all $\theta<\frac{N}{\beta}$ and $u\le v$ in $\ren$.

To simplify the notation, we set
$$
g_{1n}(x):=\dfrac{|\nabla u_n|^{p}}{(1+\frac 1n |\nabla u_n|^p)(1+u_n)^\a}, \,\, g_{2n}(x):=\lambda \dfrac{u_n}{1+\frac{1}{n}u_n}\dfrac{1}{|x|^{2s}}+ c f,
$$
and $g_n=g_{n1}+g_{n2}$. Notice that $g_{2n}(x)\le \frac{C}{|x|^{\beta+2s}}$ in $\O$.
It is not difficult to show that $\dyle\io (g_{1n}+g_{2n}) \d^s dx \le C$ for all $n$.

We claim that $||g_n||_{L^1(\O)}\le C$ for all $n$. It is clear that $||g_{2n}||_{L^1(\O)}\le C$ for all $n$.

Using the definition of $u_n$, we have
$$
u_n(x)=\io \mathcal{G}_s(x,y) g_n(y)dy\quad \hbox{    and then     } |\n u_n(x)|\le\io |\n_x \mathcal{G}_s(x,y)|g_n(y)dy.
$$
Hence, for $p<\s<2s$, to be chosen later, it follows that
$$
|\n u_n(x)|^{\s}\le \Big(\io |\n_x \mathcal{G}_s(x,y)|g_n(y)dy\Big)^{\s}\le \Big(\io \dfrac{|\n_x \mathcal{G}_s(x,y)|}{\mathcal{G}_s(x,y)} \mathcal{G}_s(x,y)g_n(y)dy\Big)^{\s}.
$$
Recall that $h(x,y)=\max\{\dfrac{1}{|x-y|},\dfrac{1}{\d(x)}\}$, form \eqref{green2}, it holds that
\begin{eqnarray*}
|\n u_n(x)|^{\s} & \le & \dyle \Big(\io
(h(x,y))^{\a} \mathcal{G}_s(x,y)g_n(y)dy\Big)\Big(\io \mathcal{G}_s(x,y)g_n(y)dy\Big)^{\s-1}\\
&\le & \dyle \Big(\io (h^{\s}(x,y)
\mathcal{G}_s(x,y)g_n(y)dy\Big)u^{\s-1}_n(x).
\end{eqnarray*}
Thus
\begin{eqnarray*}
&\dyle \frac{|\n u_n(x)|^{\s}}{(1+u_n)^{\s-1}}\le\dyle \io \Big(h^{\s}(x,y) \mathcal{G}_s(x,y)g_n(y)dy\Big)\frac{u_n^{\s-1}(x)}{(1+u_n(x))^{\s-1}}\\
&\le \dyle \io \Big(h^{\s}(x,y)
\mathcal{G}_s(x,y) g_{1n}(y)dy\Big)\frac{u_n^{\s-1}(x)}{(1+u_n(x))^{\s-1}}+ \io \Big(h^{\s}(x,y) \mathcal{G}_s(x,y)g_{2n}(y)dy\Big)\frac{u_n^{\s-1}(x)}{(1+u_n(x))^{\s-1}}.
\end{eqnarray*}
By integrating in $x$, we get
\begin{eqnarray*}
\dyle\io \frac{|\n u_n(x)|^{\s}}{(1+u_n)^{\s-1}}dx & \le & \dyle \io g_{1n}(y)\Big(\io h^{\s}(x,y) \mathcal{G}_s(x,y)\frac{u_n^{\s-1}(x)}{(1+u_n(x))^{\s-1}}dx\Big)dy\\ & + & \dyle \l \io g_{2n}(y)\Big(\io
h^{\s}(x,y)\mathcal{G}_s(x,y)\frac{u_n^{\s-1}(x)}{(1+u_n(x))^{\s-1}}dx\big)dy\equiv J_1+J_2.
\end{eqnarray*}
Let us begin by estimating $J_1$.
\begin{eqnarray*}
J_1 & \le & \dyle \io  \dfrac{|\nabla u_n(y)|^{p}}{(1+u_n(y))^\a}
\Big(\io \dfrac{\mathcal{G}_s(x,y)}{|x-y|^{\s}}dx +\io
\dfrac{\mathcal{G}_s(x,y)}{\d^{\s}(x)}dx\Big)dy.
\end{eqnarray*}
It is clear that
$$
\dfrac{\mathcal{G}_s(x,y)}{|x-y|^{\s}}\le \dfrac{C(N,s)}{|x-y|^{N-2s+\s}}.
$$
Thus using the fact that $\s<2s$, it holds that
$$
\io\dfrac{\mathcal{G}_s(x,y)}{|x-y|^{\s}}dx\le C(\O,N,s)<\infty.
$$
Define now $\psi(y):=\dyle\io
\dfrac{\mathcal{G}_s(x,y)}{\d^{\s}(x)}dx$, then
$$
\left\{
\begin{array}{rcll}
(-\Delta )^s \psi &=&\dfrac{1}{\d^\s(x)} &\inn \Omega,\\
\psi &=&0 & \inn(\mathbb{R}^N\setminus\Omega).
\end{array}\right.
$$
Using Theorem 1.2 in \cite{Gia}, we obtain that $\psi\backsimeq \d^{2s-\s}$. Therefore combining the above estimates, we reach that
$$
J_1 \le C(\O,N,s)\dyle \io  \dfrac{|\nabla u_n(y)|^{p}}{(1+u_n(y))^\a} dy.
$$
We deal now with $J_2$. Recall that $g_{2n}(y)\le \dfrac{C}{|y|^{\beta+2s}}$, thus
\begin{eqnarray*}
J_2  \le  \dyle \io \frac{C}{|y|^{\beta+2s}}\Big(\io \dfrac{\mathcal{G}_s(x,y)}{|x-y|^{\s}}dx\Big)dy+ \io \frac{C}{|y|^{\beta+2s}}\psi(y)dy.
\end{eqnarray*}
Hence, as in the computations of $J_1$,
$$
J_2 \le C\io \frac{dy}{|y|^{\beta+2s}}dy=C(N,s,\beta,\O)<\infty.
$$
Thus, we conclude that
$$
\dyle\io \frac{|\n u_n(x)|^{\s}}{(1+u_n)^{\s-1}}dx\le C_1\io \dfrac{|\nabla u_n(x)|^{p}}{(1+u_n(x))^\a}dx +C_3.
$$
Now, using Young inequality, it holds that
$$
\dyle\io \frac{|\n u_n(x)|^{\s}}{(1+u_n)^{\s-1}}dx\le \e \dyle\io \frac{|\n u_n(x)|^{\s}}{(1+u_n)^{\s-1}}dx +C(\e)\dyle\io \frac{1}{(1+u_n)^{\frac{\a\s-p(\s-1)}{\s-p}}}dx.
$$
Recall that $\s\in (p,2s)$, since $0<p-(2s-1)<1$, then $p<\frac{p}{p-(2s-1)}$. Thus we can chose $\s$ such that  $p<\s<\max\{2s, \frac{p}{p-(2s-1)}\}$.

 Hence $\a\s-p(\s-1)\ge 0$ and then $\dyle\io \frac{1}{(1+u_n)^{\frac{\a\s-p(\s-1)}{\s-p}}}dx\le |\O|$.

As a conclusion and choosing $\e$ small enough, we obtain that
\begin{equation}\label{eq:mm}
\dyle\io \frac{|\n u_n|^{\s}}{(1+u_n)^{\s-1}}dx+\dyle\io \frac{|\n u_n|^{p}}{(1+u_n)^{\a}}dx\le C\mbox{ for all  }n.
\end{equation}
Hence $||g_{2n}||_{L^1(\O)}\le C$ for all $n$ and the claim follows.

By the compactness result in Theorem \ref{key}, we get the existence of $u\in W^{1,q}_0(\O)$, for all $q<\frac{N}{N-2s+1}$, such that  up to a subsequence, $u_n\to u$ strongly in $W^{1,\theta}_0(\O)$ for all $\theta<\frac{N}{N-2s+1}$ and $|\n u_n|\to |\n u|$ a.e.  in $\O$. Using Fatou's Lemma we reach that
$$
\dyle\io \frac{|\n u|^{\s}}{(1+u)^{\s-1}}dx+\dyle\io \frac{|\n u|^{p}}{(1+u)^{\a}}dx\le C.
$$
Now, since $p<2s$, then going back to estimate \eqref{eq:mm}, choosing $\s\in (p, 2s)$  and using Vitali lemma, we can prove that
$$
\frac{|\n u_n|^{p}}{(1+u_n)^{\a}}\to\frac{|\n u|^{p}}{(1+u)^{\a}} \mbox{  strongly  in  }L^1(\O).
$$
Thus $u$ is a solution to problem \eqref{gener0} with $u\in W^{1,\theta}_0(\O)$ for all $\theta<\frac{N}{N-2s+1}$. \cqd

\begin{remarks}

Under additional hypothesis on $\a$, we can show the existence of a solution to the problem \eqref{gener0} where the term $\dfrac{1}{(1+u)^\a}$ is replaced by $\dfrac{1}{u^\a}$. More precisely, assume that $2s-1<\a<p+1-\frac{p}{s}$ (this is possible using the fact that $s>\frac 12$ and $1<p<2s$). Now, we consider $u_n$ to be the minimal solution to the problem
\begin{equation}\label{gener1s}
\left\{
\begin{array}{rcll}
(-\Delta )^s u_n &=&\lambda \dfrac{u_n}{1+\frac{1}{n}u_n}\dfrac{1}{|x|^{2s}}+ \dfrac{|\nabla u_n|^{p}}{(1+\frac 1n |\nabla u_n|^p)(\frac 1n+u_n)^\a}+ c f &\inn \Omega,\\
u_n&>&0 & \inn\Omega,\\
u_n&=&0 & \inn(\mathbb{R}^N\setminus\Omega).
\end{array}\right.
\end{equation}
As above, $v$ is a supersolution to \eqref{gener1s} and then the increasing sequence $\{u_n\}_n$ satisfies $u_n\le v$, for all $n$. It is clear that the only point that we have to prove is the fact that
$$
\left\|\dfrac{|\nabla u_n|^{p}}{(1+\frac 1n |\nabla u_n|^p)(\frac 1n+u_n)^\a}\right\|_{L^1(\O)}\le C,\mbox{  for all  }n.
$$
Repeating the same computation as above we arrive to
$$
\dyle\io \frac{|\n u_n(x)|^{\s}}{u_n^{\s-1}}dx\le C_1\io \dfrac{|\nabla u_n(x)|^{p}}{(\frac 1n+u_n(x))^\a}dx +C_3.
$$
Then by Young inequality,
$$
(1-\e)\dyle\io \frac{|\n u_n(x)|^{\s}}{u_n^{\s-1}}dx\le C(\e)\dyle\io \frac{1}{u_n^{\frac{\a\s-p(\s-1)}{\s-p}}}dx +C_3.
$$
Since $2s-1<\a<p+1-\frac{p}{s}$, then $p<\frac{p}{p-\a}$, hence choosing  $\s$ such that
$$
\max\{p, \frac{p}{p+1-\a}\}<\s<\frac{p}{p-\a}<2s,
$$
it holds that $\a\s-p(\s-1)>0$. Now, using the fact that the sequence $\{u_n\}_n$ is increasing in $n$ and since $f\gneqq 0$, then $u_n\ge u_1\ge C\d^s$ for some universal constant and then
$$
\io \frac{1}{u_n^{\frac{\a\s-p(\s-1)}{\s-p}}}dx\le C\io \dfrac{1}{\d^{s\frac{\a\s-p(\s-1)}{\s-p}}}dx.
$$
Since $s\frac{\a\s-p(\s-1)}{\s-p}<1$, then $\io \frac{1}{\d^{s\frac{\a\s-p(\s-1)}{\s-p}}}dx<\infty$ and hence we conclude.

In a forthcoming work, we will analyze the general case without using monotony arguments and under general integrability assumptions on $f$.

\end{remarks}

\end{document}